\documentclass[mnsc,nonblindrev]{informs3}
\OneAndAHalfSpacedXI % current default line spacing
\usepackage{endnotes}
\let\footnote=\endnote

\makeatletter
\AtBeginDocument{%
  \everypar{\looseness=-1\relax}%
}
\makeatother

\usepackage{natbib}
\usepackage{algorithm}
\usepackage{algpseudocode}
\usepackage{mathtools}
\usepackage{enumitem}
\usepackage{booktabs}
\usepackage{bbm}
\usepackage{float}

 \bibpunct[, ]{(}{)}{,}{a}{}{,}%
 \def\bibfont{\small}%
 \newcommand{\action}{\mathcal{A}}
\newcommand{\Sspace}{\ensuremath{\mathcal{S}}}

\newcommand{\tset}{\mathcal{T}}

\newcommand{\E}{\mathbb{E}}

\newcommand{\Endo}{\mathcal{X}}
\newcommand{\Exo}{\mathcal{W}}
\newcommand{\U}{\mathcal{U}}
\DeclareMathOperator{\diag}{diag}
\newcommand{\R}{\mathbb{R}}

\newcommand{\probP}{\text{I\kern-0.15em P}}
\newcommand{\sx}{w}
\newcommand{\se}{x}

\newcommand{\selva}[2][]{%
  \textcolor{blue}{#2}%                % main text
  \if\relax\detokenize{#1}\relax       % test whether note is empty
  \else
    \marginpar{\raggedright\scriptsize #1}% right-hand note
  \fi
}

\newcommand{\negar}[2][]{%
  \textcolor{red}{#2}%                % main text
  \if\relax\detokenize{#1}\relax       % test whether note is empty
  \else
    \marginpar{\raggedright\scriptsize #1}% right-hand note
  \fi
}

\TheoremsNumberedThrough     % Preferred (Theorem 1, Lemma 1, Theorem 2)
\ECRepeatTheorems
\EquationsNumberedThrough   
\begin{document}
%%%%%%%%%%%%%%%%

% Outcomment only when entries are known. Otherwise leave as is and
%   default values will be used.
%\setcounter{page}{1}
%\VOLUME{00}%
%\NO{0}%
%\MONTH{Xxxxx}% (month or a similar seasonal id)
%\YEAR{0000}% e.g., 2005
%\FIRSTPAGE{000}%
%\LASTPAGE{000}%
%\SHORTYEAR{00}% shortened year (two-digit)
%\ISSUE{0000} %
%\LONGFIRSTPAGE{0001} %
%\DOI{10.1287/xxxx.0000.0000}%

% Author's names for the running heads
% Sample depending on the number of authors;
% \RUNAUTHOR{Jones}
% \RUNAUTHOR{Jones and Wilson}
% \RUNAUTHOR{Jones, Miller, and Wilson}
% \RUNAUTHOR{Jones et al.} % for four or more authors
% Enter authors following the given pattern:
\RUNAUTHOR{Soheili, Nadarajah, Yang}

% Title or shortened title suitable for running heads. Sample:
% \RUNTITLE{Bundling Information Goods of Decreasing Value}
% Enter the (shortened) title:
\RUNTITLE{Weakly Time-Coupled Approximation of MDPs}

% Full title. Sample:
\TITLE{Weakly Time-Coupled Approximation of Markov Decision Processes}
%\TITLE{Revisiting Model Selection in Optimization Based Approximate Dynamic Programming}

% Block of authors and their affiliations starts here:
% NOTE: Authors with same affiliation, if the order of authors allows,
%   should be entered in ONE field, separated by a comma.
%   \EMAIL field can be repeated if more than one author
\ARTICLEAUTHORS{%
\AUTHOR{Negar Soheili}
\AFF{Information and Decision Sciences Department, University of Illinois at Chicago, Chicago, IL, 60607, \EMAIL{nazad@uic.edu}}%, \URL{}}
\AUTHOR{Selvaprabu Nadarajah}
\AFF{Information and Decision Sciences Department, University of Illinois at Chicago, Chicago, IL, 60607, \EMAIL{selvan@uic.edu}} %, \URL{}}
\AUTHOR{Bo Yang}
\AFF{Industrial Engineering and Decision Analytics, Hong Kong University of Science and Technology, Clear Water Bay, Hong Kong SAR, \EMAIL{yangb@ust.hk}}%, \URL{}}
} 

\ABSTRACT{Finite-horizon Markov decision processes (MDPs) with high-dimensional exogenous uncertainty and endogenous states arise in operations and finance, including the valuation and exercise of Bermudan and real options, but face a scalability barrier as computational complexity grows with the horizon. A common approximation represents the value function using basis functions, but methods for fitting weights treat cross-stage optimization differently. Least squares Monte Carlo (LSM) fits weights via backward recursion and regression, avoiding joint optimization but accumulating error over the horizon. Approximate linear programming (ALP) and pathwise optimization (PO) jointly fit weights to produce upper bounds, but temporal coupling causes computational complexity to grow with the horizon. We show this coupling is an artifact of the approximation architecture, and develop a weakly time-coupled approximation (WTCA) where cross-stage dependence is independent of horizon. For any fixed basis function set, the WTCA upper bound is tighter than that of ALP and looser than that of PO, and converges to the optimal policy value as the basis family expands. We extend parallel deterministic block coordinate descent to the stochastic MDP setting exploiting weak temporal coupling. Applied to WTCA, weak coupling yields computational complexity independent of the horizon. Within equal time budget, solving WTCA accommodates more exogenous samples or basis functions than PO, yielding tighter bounds despite PO being tighter for fixed samples and basis functions. On Bermudan option and ethanol production instances, WTCA produces tighter upper bounds than PO and LSM in every instance tested, with near-optimal policies at longer horizons.}

\KEYWORDS{Markov Decision Processes, Approximate Linear Programming, Pathwise Optimization, Model Selection, Stochastic Optimization,  Least Squares Monte Carlo, Stochastic Approximation, Block Coordinate Descent, Parallel Computing} 
\maketitle

%%%%%%%%%%%%%%%%%%%%%%%%%%%%%%%%%%%%%%%%%%%%%%%%%%%%%%%%%%%%%

\section{Introduction}

Finite-horizon Markov Decision Processes (MDPs) arise in applications where decisions shape the evolution of endogenous system states while exogenous drivers follow stochastic dynamics \citep{hernandez2012discretetimeMDP, puterman2014}. Problems such as investment timing, switching between operating regimes, staged development projects, and commodity storage fit naturally within this framework \citep{dixit1994investment, trigeorgis1996real}. The endogenous state encodes what has been committed or exercised and determines which actions remain feasible, while the exogenous state captures market conditions or environmental drivers that evolve independently of the decision maker's actions. The interaction between controlled endogenous transitions and exogenous uncertainty generates dynamic tradeoffs.

When endogenous transitions are largely reversible, for example, adjusting inventory levels in commodity storage \citep{thompson2009storage,secomandi2010optimal}, a natural heuristic is to periodically solve a deterministic or scenario-based model over a rolling horizon and implement only the first-stage decision \citep{chand2002forecast}. Such reoptimization approaches can perform well in these settings because short-term commitments do not significantly constrain future decisions. In contrast, problems involving irreversible transitions, such as investing, deferring, abandoning, or switching operating regimes, require reasoning about future uncertainty across multiple periods. This intertemporal dependence under evolving exogenous uncertainty can make reoptimization perform poorly and motivates approximations that explicitly account for uncertainty.

A common uncertainty-aware strategy is to approximate the value function at each stage using a linear combination of basis functions. Methods such as least squares Monte Carlo (LSM; \citealp{carriere1996valuation, longstaff2001valuing, tsitsiklis2001regression}), approximate linear programming (ALP; \citealp{schweitzer1985, de_farias_linear_2003}), and pathwise optimization (PO; \citealp{desai2012pathwise}) all follow this approach to construct a value function approximation but differ in how they fit basis function weights across stages. LSM proceeds backward stage by stage, fitting basis function weights to reduce the regression error with respect to continuation value estimates on sample paths of exogenous uncertainty. Because the approximation at each stage serves as input to earlier regressions, errors propagate backward through the horizon, and errors at later stages can distort value estimates at the early stages where accuracy matters most for impending decisions \citep{glasserman2004simulation}. \looseness=-1

In contrast, ALP and PO optimize basis function weights across stages jointly and yield upper bounds on the optimal policy value. ALP enforces constraints derived from Bellman equations across stages, states, and actions. PO directly optimizes an upper bound through the information-relaxation and duality framework \citep{brownsmith2022}, where actions along trajectories of exogenous sample paths are chosen to maximize rewards penalized for knowledge of future information. As the horizon grows, the computational complexity grows nonlinearly in both formulations.\looseness=-1

A central insight of this paper is that when basis function weights are optimized across all stages jointly, the cross-stage dependence among weights, which we term {\emph{temporal coupling}}, need not be dictated by the structure of the MDP but can be controlled through the approximation architecture. Exploiting this observation, we develop an MDP approximation whose temporal coupling is weak, meaning it is not a function of the horizon length. This formulation preserves the upper-bound property of ALP and PO while enabling first-order methods and parallel computation to scale.

Our contributions fall into the following broad categories:

\begin{itemize}

\item \textbf{Weakly Time-Coupled Approximation (WTCA).} We introduce a unifying stochastic optimization formulation to formally define a temporal coupling factor and show that this factor is equal to the horizon length for PO and ALP. That is, both approximations are fully time coupled but for different reasons. We leverage the nature of coupling in ALP and show that it can be relaxed to obtain a ``weakly-time" coupled approximation where the cross-stage dependence among basis function weights is limited to two stages, and thus does not grow with the horizon length. We prove that, for any fixed set of basis functions, the WTCA upper bound is at least as tight as ALP's upper bound, and converges to the optimal value as the richness of the
of basis function set expands. 

\item \textbf{Parallel Stochastic Block Coordinate Descent Algorithm.} Block coordinate descent partitions the basis function weights into blocks and updates each block separately. In its parallel variant, all blocks are updated independently and simultaneously, but the convergence benefit of parallelization depends on the coupling structure among blocks. We extend parallel deterministic block coordinate descent to the stochastic MDP setting in a manner that exploits the weak temporal coupling of WTCA. This parallel stochastic block coordinate descent (PS-BCD) algorithm partitions basis function weights into one block per stage and at each iteration updates all stage blocks in parallel. Two properties make PS-BCD effective for WTCA. First, because WTCA is weakly coupled, the information lost by updating blocks in parallel rather than jointly as in stochastic gradient decent (SGD) is negligible, so the iteration complexity remains independent of the horizon. Second, parallelization removes the per-iteration cost dependence on the horizon. Together, the total computational complexity of using PS-BCD to solve WTCA is independent of horizon length. For PO, full temporal coupling undermines both properties. The information loss from parallel block updates is substantial, worsening iteration complexity compared to SGD. As a result, PS-BCD applied to PO has higher computational complexity than SGD because the information loss under strong coupling outweighs the savings from parallelization.\looseness=-1

\item \textbf{Model Selection Under Time Budgets.} We show for any fixed basis function set, PO yields the tightest bound followed by WTCA and then ALP. This ordering assumes the expectations are computed exactly and each formulation is solved to optimality. In practice, stochastic first-order methods such as SGD and PS-BCD draw one sample path per iteration, so the total number of sample paths processed and the number of basis functions that can be supported both depend on the computational complexity of solving the formulation. Because WTCA is weakly coupled, PS-BCD can solve it at a computational cost independent of the horizon and SGD at cost linear in the horizon, whereas the full coupling in PO forces computational cost to grow nonlinearly with the horizon under both approaches. As the horizon lengthens, more sample paths can be processed within a given budget when solving WTCA than PO. Analogously, we show that WTCA can incorporate more basis functions within a fixed budget, which is a further differentiator. Therefore, although PO gives the tighter upper bound for any fixed set of basis functions and exogenous samples, WTCA can leverage more samples and basis functions per unit of computation, allowing us to establish that the WTCA upper bound for a fixed computational time budget can be tighter than PO. Since ALP is more strongly coupled than WTCA and yields a weaker upper bound, it is dominated on both counts. 

\item \textbf{Empirical Validation.} We evaluate WTCA against PO and LSM on instances of multi-asset Bermudan options from \citet{desai2012pathwise} and merchant ethanol production from \citet{guthrie2009real}, two settings involving (irreversible) sequential timing decisions. We solve WTCA using PS-BCD and PO using stochastic gradient descent, each under equal time budgets. WTCA produces the tightest upper bounds across all instances tested, confirming the model selection reversal noted above. That is, despite PO being theoretically tighter than WTCA in terms of upper bounds for any fixed set of basis functions and exogenous samples, WTCA produces tighter bounds in every instance tested. The policy quality of WTCA and PO is comparable in most instances, with WTCA yielding near-optimal policies throughout and showing the largest advantage at smaller numbers of assets and longer horizons where PO's convergence disadvantage is most pronounced. Both WTCA and PO consistently outperform LSM in both upper bound and policy quality.

\end{itemize}

In summary, these findings demonstrate that computational structure can be as important as approximation tightness in determining practical solution quality, and that progress in optimization-based MDP methods can arise not only from tighter relaxations, but also from rethinking how approximation architecture and computation interact. Under equal computational time budgets and with the ability to parallelize stage updates, WTCA produces tighter bounds than PO in every instance tested, with the advantage widening as the horizon grows. To put this in context, GPU price-performance (measured in floating-point operations per second per dollar) has doubled approximately every 2.5 years over the past two decades \citep{hobbhahn2022gpu}, a trend that makes parallel algorithms increasingly attractive.

Our work is novel with respect to several streams of research on MDPs, first-order methods, and approximate dynamic programming. 

The first stream studies structured MDPs whose tractability follows from special problem structure. Among them, weakly coupled MDPs, in which multiple subsystems interact through linking constraints, have been studied \citep{hawkins2003lagrangian, adelman2008, brown2023strength, nadarajah2025selfadapting}. In these problems, relaxing or reformulating the linking constraints yields tractable subproblems. Our notion of weak temporal coupling is new. The temporal coupling structure resides not in the MDP itself but in the approximation architecture, and the approach applies to finite-horizon MDPs without requiring special problem structure. To the best of our knowledge, WTCA is the first optimization-based MDP approximation that maintains both an upper-bound guarantee and vanishing approximation error while being weakly coupled. \looseness=-1

A second applies first-order methods to MDP approximations. \citet{brown2014information} construct gradient-based dual penalties within the information-relaxation framework for convex stochastic dynamic programs, \citet{lin2019revisitingALP} use mirror descent to solve ALP, and \citet{yang2024least,yang2025improved} apply block coordinate descent and an ADMM-based regression method, respectively, to solve PO. These contributions improve scalability but do not address temporal coupling, so computational complexity still grows with the horizon. Moreover, while \citet{desai2012pathwise}, \citet{yang2024least}, and \citet{yang2025improved} solve a sample average approximation of PO, we consider stochastic approximation, which has not been tested in this context. There is indeed extant literature on block coordinate descent for convex optimization \citep{peng2013parallel,richtarik2014iteration,SBMD,xu2015block,AccCD,richtarik2016parallel,fercoq2017smooth}. Stochastic variants that combine block updates with stochastic gradients have been developed by \citet{SBMD} and \citet{xu2015block}, but these methods do not exploit separability structure in the objective. \citet{richtarik2014iteration}, \cite{AccCD}, and \cite{richtarik2016parallel} are the only works to our knowledge that analyze how the coupling structure among blocks affects the convergence of parallel block coordinate descent, but their analysis is restricted to the deterministic setting. Our work extends their separability-aware parallel block coordinate descent to the stochastic MDP setting and establishes that its convergence guarantees capture temporal coupling. This stochastic extension is of independent interest to the first-order method literature, and its use with WTCA to show horizon-length independent solution complexity is new to the MDP approximation literature. 

Finally, we add to approximate dynamic programming for the application class in our numerical study, namely, financial options and real options. The most common method here is arguably LSM \citep{longstaff2001valuing, tsitsiklis2001regression}, with extensions using alternative regression surrogates such as Gaussian processes \citep{gramacy2015sequential}. Upper bounds have been obtained via the duality approach for American options \citep{rogers2002,haugh2004} and generalized to a broader class of MDPs by the information relaxation framework \citep{brown2010information,brownsmith2022,chen2024information}. These methods have been applied to multi-asset options \citep{broadie1997valuation} and to manage multiple exercise rights in commodity and energy markets \citep{meinshausen2004,jaillet2004valuation,carmona2008,  nadarajah2017comparison}. WTCA and PS-BCD add to approaches that move beyond backward recursion and seek to fit basis function weights simultaneously across stages. Here, in addition to PO and ALP that rely on parametrizing approximations using linear combinations of basis functions, there are approaches that employ nonlinear approximation architectures such as neural networks (see, e.g., \citealp{becker2019deep,bayer2021}). In these methods, computational complexity grows with horizon length, unlike our work. WTCA also adds a new ALP relaxation that provides an upper bound on the optimal policy, expanding limited work on ALP relaxations in the literature \citep{desai2012,nadarajah2015relaxations}.\looseness=-1

In \S\ref{sec:fh-mdp-optimization}, we formalize the MDP setting. In \S\ref{sec:opt-approximations}, we analyze temporal coupling in ALP and PO and introduce WTCA. In \S\ref{sec:stoch-bcd}, we develop PS-BCD. In \S\ref{sec:numerical}, we present numerical experiments. We conclude in \S\ref{sec:Conclusion}. All proofs are in the Electronic Companion.

\section{Finite-Horizon Markov Decision Processes}\label{sec:fh-mdp-optimization}

We consider a finite-horizon MDP with discrete stages $\tset := \{0,1,\ldots,T-1\}$ and discount factor $\gamma\in(0,1)$. At stage $t$, the system is in state $s_t\in\Sspace_t$. After selecting an action $a_t$ from a finite set $\action_t$, the decision-maker receives reward $r_t(s_t,a_t)$ and the system transitions to a new state $s_{t+1}$. The state consists of two components. The endogenous component $\se_t$ belongs to the finite set $\Endo_t$ and transitions deterministically to $\se_{t+1}=h(\se_t,a_t)$ under an action $a_t$. In contrast, the exogenous component $\sx_t$ is in set $\Exo_t$, which could be continuous, and evolves according to a Markovian stochastic process $\mu$ that is independent of the action $a_t$. The initial state at stage $0$ is the singleton $(\se_0,\sx_0)$. The stochastic evolution of the exogenous state induces a probability measure on trajectories $\sx = (\sx_0,\ldots,\sx_{T-1})$. Throughout the paper, $\E[\cdot]$ denotes expectation under $\mu$ unless a subscript specifies a different measure.
\looseness=-1

A policy is a sequence of decision rules
$\pi=\{\pi_t\}_{t=0}^{T-1}$, where each
$\pi_t\!\!:\Sspace_t\to\action_t$ maps the current state to an action.
Given an initial state $s_0$, a policy $\pi$ induces a random trajectory
$(s_0,a_0,s_1,a_1,\ldots,s_{T-1},a_{T-1},s_T),$
where $a_t=\pi_t(s_t)$. The value of policy $\pi$ starting from $s_0$ is
\[
V_0^\pi(s_0)
= \E\!\left[\,\sum_{t=0}^{T-1}\gamma^t
  r_t(s_t,\pi_t(s_t))\,\Big|\,s_0\right],
\]
where the expectation is taken with respect to the exogenous process. An optimal policy solves
\begin{equation*}
\pi^*\in\arg\sup_{\pi\in\Pi}\; V_0^\pi(s_0),
\end{equation*}
where $\Pi$ denotes the set of all admissible state-dependent policies. We denote the optimal policy value by $V_0^*(s_0) := V_0^{\pi^*}(s_0)$. Analogously, the MDP value function $V_t^*(s_t)$ at stage $t\in\tset$ and state $s_t\in\Sspace_t$ satisfies
\[
V^*_t(s_t)
=
\E\!\left[
\sum_{t'=t}^{T-1}
\gamma^{t'-t}
\, r_{t'}(s_{t'},\pi^*_{t'}(s_{t'}))
\;\Big|\; s_t
\right].
\]

In applications such as inventory management or commodity storage, the
endogenous transition is \emph{reversible}; for instance, a storage position built up in one period can be drawn down in the next, so a single action does not permanently restrict which endogenous states remain reachable.
In such settings, a \emph{rolling-horizon} (reoptimization) approach often yields satisfactory performance. At each stage, the decision-maker solves a deterministic optimization over a deterministic forecast of future exogenous states, implements the immediate action, and resolves in the next period with updated information~\citep{chand2002forecast,thompson2009storage,devalkar2011dynamic,powell2011}.\looseness=-1

In contrast, problems such as investment timing, project abandonment, or regime switching involve \emph{irreversible} endogenous transitions, where an action at stage~$t$ permanently alters the set of endogenous states
reachable
thereafter~\citep{pindyck1991irreversibility,dixit1994investment,trigeorgis1996real}.
Because such decisions cannot be undone, a single suboptimal action can be highly consequential, and the decision-maker must weigh immediate rewards against the value of preserving future flexibility. Reoptimization, which does not account for this trade-off, tends to perform poorly in these settings.
Solving such problems exactly via dynamic programming is generally intractable for practically relevant state-space dimensions due to the curse of dimensionality~\citep{puterman2014}, motivating approximation methods that account for uncertainty while keeping the problem computationally tractable.\looseness=-1

A common approximation strategy is to represent the MDP value function at each stage as a linear combination of prespecified basis functions. For each stage $t\in\tset$, let $\phi_t = (\phi_{t,1},\ldots,\phi_{t,B})$ denote a vector of basis functions, where each $\phi_{t,b}$ is defined on $\Sspace_t$.
The approximate value function at stage $t$ is then
\[
\hat V_{t,\beta}(s_t)
:= \sum_{b=1}^B \beta_{t,b}\,\phi_{t,b}(s_t),
\]
with terminal condition $\hat V_{T,\beta}\equiv 0$, and where $\beta_{t,b}$ denotes the $b$-th basis function weight.
For each $t$, let $\beta_t := (\beta_{t,1},\ldots,\beta_{t,B})\in\R^B,$
and define
$\beta=(\beta_0,\ldots,\beta_{T-1})\in\R^{BT}$.

Given such an approximation, a policy can be obtained by acting greedily with respect to the reward plus discounted continuation value under the approximation at each period and state:
\begin{equation}\label{eq:greedy-policy}
\pi_{t}^\beta(s_t)
= \argmax_{a_t\in\action_t}
\{r_t(s_t,a_t)
+ \gamma\E[\hat V_{t+1,\beta}(s_{t+1})\vert s_t,a_t]\}.
 \end{equation}
Simulating the greedy (feasible) policy forward on
independent sample paths provides an estimate of its value and hence a lower bound on~$V^*_0(s_0)$.\looseness=-1

A popular method for fitting value function approximations in problems with timing decisions is LSM~\citep{longstaff2001valuing,tsitsiklis2001regression,glasserman2004monte}. This method proceeds backward in time, starting from the terminal stage and working toward $t=0$. At each stage $t$, it regresses the discounted values assigned by the
already-fitted stage-$(t+1)$ approximation onto the current stage basis functions by solving
\[
\min_{\beta_t\in\R^B}\;
\sum_{i=1}^P
\Big(Y_{t+1}^i
     -\sum_{b=1}^B\beta_{t,b}\phi_{t,b}(s_t^i)\Big)^2,
\]
where $Y_{t+1}^i = \gamma\,\hat V_{t+1,\beta}(s_{t+1}^i)$ is the
discounted approximate value at the next-stage state along the $i$-th sample path and $P$ is the number of paths. The fitted approximation $\hat V_{t,\beta}$ then serves as the continuation-value estimate for stage $t$, used both to construct the regression targets at stage $t-1$ and to define the policy at stage $t$ via~\eqref{eq:greedy-policy}. Because each stage involves only a local regression, LSM is computationally attractive. However, since the regression targets at stage~$t$ depend on the approximation fitted at stage $t+1$, errors accumulate backward through the horizon, degrading accuracy at the early stages where decisions are often most consequential~\citep{clement2002analysis,egloff2005monte,yang2024least}.\looseness=-1

\section{Weakening Temporal Coupling in Optimization-Based Approximations}%
\label{sec:opt-approximations}
Optimization-based methods such as ALP and PO avoid the error accumulation in LSM by optimizing basis function weights across all stages jointly. To understand the implications of this coupled optimization, we cast
ALP and PO within a unified stochastic optimization framework in
\S\ref{sec:global-approx}, show that they are fully coupled across time in
\S\ref{sec:PO-ALP}, and introduce a new approximation with ``weak" temporal coupling in \S\ref{sec:weakly-time-model}. We discuss in \S\ref{subsec:BoundOrderingAndModSelection} why model selection must account for how temporal coupling affects computational cost.

\subsection{A Unified Stochastic Optimization Framework}\label{sec:global-approx}

Both ALP and PO can be analyzed within a unified stochastic optimization framework of the form\looseness =-1
\begin{equation}
\label{eq:general-stochastic-form}
\min_{\beta\in\Omega}\;
F(\beta)
:=\E_\xi\!\left[\,\sum_{j=1}^m f_j(\beta,\sx)\,\right],
\end{equation}
where $\E_\xi$ denotes expectation under a probability measure $\xi$ on $\sx$ that can differ from $\mu$, the feasible set $\Omega\subseteq\R^{BT}$ is a sufficiently large compact convex set, such as a box or Euclidean ball, whose interior contains the optimal solutions of the approximation models and onto which projection is available in closed form, and the component functions $f_j(\beta,\sx)$ are convex in $\beta$ and determine how basis function weights across stages are coupled. 

The key structural feature is how stage-level weight blocks $\beta_t$, $t \in \tset$, appear jointly within the component functions $f_j$. We formalize this as temporal coupling.\looseness=-1
\begin{definition}[Weak and Full Temporal Coupling]\label{def:temporalcoupling} For each component function $f_j$ in~\eqref{eq:general-stochastic-form}, let $C_j \subseteq \{0,1,\ldots,T-1\}$ denote the set of stages whose stage-level weight blocks $\beta_t$ appear in $f_j$, and define the \emph{component coupling degree} $\kappa_j$ as the cardinality of $C_j$. The \emph{temporal coupling width} of $F$ is \looseness=-1
\[
\kappa(F) := \max_{j=1,\ldots,m} \kappa_j.
\]
When $\kappa(F)=1$, the objective decomposes by stage. When $\kappa(F)$ is bounded independently of the horizon length $T$, we call the model \emph{weakly coupled}. When $\kappa(F)=T$, the model is \emph{fully coupled}. 
\end{definition}
This definition is motivated by partial separability in block-coordinate optimization~\citep{richtarik2016parallel,AccCD}, specialized here to stage-indexed weight blocks. As we will establish in \S\ref{sec:stoch-bcd}, full temporal coupling causes overall solution complexity to grow nonlinearly in $T$, whereas weak temporal coupling across $T$ component functions can be exploited via parallelization to keep complexity independent of $T$. To enable this analysis, we first need to characterize the temporal coupling of ALP, PO, and WTCA, which is the focus of the remainder of this section.

\subsection{Fully Time-Coupled Models: Pathwise Optimization and Approximate Linear Programming}\looseness=-1%
\label{sec:PO-ALP}

We formulate PO and ALP within the framework of \eqref{eq:general-stochastic-form} and show that both are fully coupled with $\kappa(F)=T$, though the coupling arises for different reasons. 

\paragraph{Pathwise optimization.}
PO builds on the information-relaxation duality framework, which constructs valid upper bounds on $V^*_0(s_0)$ by allowing a decision-maker to observe future exogenous information $\sx=(\sx_0,\sx_1,\ldots,\sx_{T-1})$ before choosing actions, while imposing penalties that offset the resulting informational advantage~\citep{brown2010information,desai2012pathwise,yang2024least}.

For a fixed weight vector $\beta$, define the stagewise information-relaxation penalty
\begin{equation}
\label{eq:info-relaxpenalty}
\hat{\mathcal P}_t(\se_t,a_t,\sx_t,\sx_{t+1})
:=
\gamma
\Big(
\hat V_{t+1,\beta}(\se_{t+1},\sx_{t+1})
-
\E\!\left[
\hat V_{t+1,\beta}(\se_{t+1},\sx_{t+1})
\mid \sx_t
\right]
\Big),
\end{equation}
where the endogenous state evolves according to $\se_{t+1}=h(\se_t,a_t)$.
The penalty $\hat{\mathcal P}_t$ offsets the informational advantage created by allowing the decision-maker to observe $\sx_{t+1}$ before choosing $a_t$ at state $(\se_t,\sx_t)$. The first term is the continuation value under the realized next-period exogenous state, while the second term subtracts the expected continuation value given the current information $\sx_t$. By construction,
$\E[\hat{\mathcal P}_t \mid \sx_t]=0$, so the penalty does not alter the expected value of any non-anticipative policy.

Given a realized exogenous vector $\sx$, consider the penalized
anticipative problem
\begin{equation}
\label{eq:po-penalized}
f(\beta,\sx)
:=
\max_{a \in \action}
\sum_{t=0}^{T-1}
\gamma^t
\left[
r_t(s_t,a_t)
-
\hat{\mathcal P}_t(\se_t,a_t,\sx_t,\sx_{t+1})
\right],
\end{equation}
where
$\action := \action_0 \times \cdots \times \action_{T-1}$ denotes the set of full-horizon action vectors, and $a=(a_0,\ldots,a_{T-1})$ with $a_t\in\action_t$. A valid upper bound on $V_0^*(s_0)$
\citep{brown2010information,desai2012pathwise} is given by the expectation of $f(\beta,\sx)$: 
\[
F_{\mathrm{PO}}(\beta)
:=
\E\!\left[f(\beta,\sx)\right].
\]
The PO formulation minimizes this upper bound over $\beta$ by solving
\begin{equation}
\label{eq:po-objective}
\min_{\beta\in\Omega}\;
F_{\mathrm{PO}}(\beta),
\end{equation}
yielding the tightest upper bound attainable when the dual penalty is constructed using the chosen value function approximation family. PO fits directly into~\eqref{eq:general-stochastic-form} with random vector $\sx$ distributed according to $\xi = \mu$, and a single component function $f_1(\beta,\sx)=f(\beta,\sx)$ in \eqref{eq:po-penalized}, that is, $m=1$.\looseness=-1

For each realization $\sx$, the maximization in~\eqref{eq:po-penalized} jointly optimizes the entire action sequence
$(a_0,\ldots,a_{T-1})$, and the resulting objective depends on all weight blocks $(\beta_0,\ldots,\beta_{T-1})$: for each action sequence, the information-relaxation penalties collectively involve $\hat V_{t+1,\beta}$ at every stage, so the maximum over all action sequences couples the full weight vector jointly (i.e., $C_1 = \{0,1,\ldots,T-1\}$). This gives $\kappa(F_{\mathrm{PO}})=\kappa_1=T$.

\paragraph{Approximate linear programming.}
ALP computes a value function approximation $\hat V_{t,\beta}$ at each stage $t$ that majorizes the MDP value functions $V^*_t$ and thus provides an upper bound on $V^*_0(s_0)$. For every stage $t$, state $s_t=(\se_t,\sx_t)$, and action $a_t$, it enforces a ``Bellman-type'' inequality requiring that the value function approximation at this state is no less than the immediate reward plus the discounted continuation value at the next stage. For each realization of the exogenous state $\sx_t$, define the maximum
Bellman deviation over the endogenous state and action as
\begin{equation}\label{eq:Delta-Bellmanerror}
\Delta_t(\beta;\sx_t)
:=
\max_{(\se_t,a_t)\in\Endo_t\times\action_t}
\left\{
r_t(s_t,a_t)
+\gamma\,\E\!\left[
\hat V_{t+1,\beta}(s_{t+1})\Big\vert s_t,a_t\right]
-
\hat V_{t,\beta}(s_t)
\right\}.
\end{equation}
The ALP formulation is equivalent to
\begin{equation}
\label{eq:ALP-def}
\min_{\beta\in\Omega}
\hat V_{0,\beta}(s_0)
\quad\text{s.t.}\quad
\Delta_t(\beta;\sx_t)\le 0,
\quad
\forall\,t\in\tset,\sx_t\in\Exo_t.
\end{equation}

\begin{proposition}\label{prop:ALP-saddlepoint}
Let $\mathcal D$ denote the set of sequences
$\alpha = (\alpha_0,\ldots,\alpha_{T-1})$,
where each $\alpha_t$ is a probability measure on $\Exo_t$.
Suppose $\Omega$ is chosen large enough that the optimal
solution of~\eqref{eq:ALP-def} lies in its interior and $\phi_{t,1} \equiv 1$ for all $t \in \tset$. Then the optimal values of~\eqref{eq:ALP-def} and the saddle-point problem
\begin{equation}\label{eq:alp-saddlepoint}
\min_{\beta\in\Omega}\;
\sup_{\alpha\in \mathcal D}\;
\hat V_{0,\beta}(s_0)
+
\sum_{t=0}^{T-1}
\gamma^t\,
\E_{\alpha_t}\!\left[\Delta_t(\beta;\sx_t)\right]
\end{equation}
are equal, and their sets of optimal solutions in $\beta$ coincide.
\end{proposition}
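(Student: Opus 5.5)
The plan is to eliminate the inner supremum in closed form and then use the constant basis function $\phi_{t,1}\equiv 1$ to move between feasible and infeasible weight vectors without changing the objective. For fixed $\beta$, the inner problem in \eqref{eq:alp-saddlepoint} separates across $t$. Because each $\alpha_t$ ranges over all probability measures on $\Exo_t$, including point masses, we have $\sup_{\alpha_t}\E_{\alpha_t}[\Delta_t(\beta;\sx_t)] = \bar\Delta_t(\beta):=\sup_{\sx_t\in\Exo_t}\Delta_t(\beta;\sx_t)$. The saddle-point problem therefore reduces to $\min_{\beta\in\Omega} G(\beta)$ with
\[
G(\beta):=\hat V_{0,\beta}(s_0)+\sum_{t=0}^{T-1}\gamma^t\bar\Delta_t(\beta).
\]
For every $\beta$ feasible in \eqref{eq:ALP-def}, each $\bar\Delta_t(\beta)\le 0$, so $G(\beta)\le \hat V_{0,\beta}(s_0)$. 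This gives $\min G\le$ the ALP optimal value.

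For the reverse inequality, I would use a constant-shift construction. Fix any $\beta$ with finite $\bar\Delta_t(\beta)$. Define constants backward by $c_T=0$ and $c_t=\gamma c_{t+1}+\bar\Delta_t(\beta)$. Let $\beta'$ be $\beta$ with $\beta'_{t,1}=\beta_{t,1}+c_t$. Since $\phi_{t,1}\equiv 1$, adding $c_t$ to $\hat V_{t,\beta}$ and $c_{t+1}$ to $\hat V_{t+1,\beta}$ changes the Bellman deviation by $\gamma c_{t+1}-c_t$. Hence $\Delta_t(\beta';\sx_t)=\Delta_t(\beta;\sx_t)-\bar\Delta_t(\beta)\le 0$ for all $\sx_t$, so $\beta'$ is ALP-feasible. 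Its objective is $\hat V_{0,\beta'}(s_0)=\hat V_{0,\beta}(s_0)+c_0$. Unrolling the recursion gives $c_0=\sum_t\gamma^t\bar\Delta_t(\beta)$, so $\hat V_{0,\beta'}(s_0)=G(\beta)$. Thus ALP value $\le G(\beta)$ for every $\beta$, and the optimal values are equal.

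For the solution sets, start from an ALP-optimal $\beta^*$. It satisfies $G(\beta^*)\le \hat V_{0,\beta^*}(s_0)=\text{opt}=\min G$, so it is optimal for \eqref{eq:alp-saddlepoint}. Moreover, $G(\beta^*)=\text{opt}$ forces $\sum_t\gamma^t\bar\Delta_t(\beta^*)=0$ with every term nonpositive, so all $\bar\Delta_t(\beta^*)=0$. Conversely, if $\beta$ minimizes $G$, its shift $\beta'$ is ALP-optimal.

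I expect the main obstacle to be the set-coincidence claim and the requirement that $\beta'\in\Omega$. The shift argument shows only that a $G$-minimizer $\beta$ maps to an ALP-optimal $\beta'$, and $\beta=\beta'$ exactly when every $\bar\Delta_t(\beta)=0$. A $G$-minimizer with some $\bar\Delta_t\ne 0$ is related to an ALP solution only through a constant shift, and need not itself be ALP-optimal. I would first try to show that $\bar\Delta_t(\beta)=0$ at every $G$-minimizer, and I suspect this is where the interior assumption on $\Omega$ is meant to enter. If that fails, I would state the coincidence modulo the stagewise constant shift, which preserves the reduced objective $G$. The membership $\beta'\in\Omega$ I would handle using the assumption that $\Omega$ is a large box or ball whose interior contains the optimal solutions, applying the shift only at minimizers.
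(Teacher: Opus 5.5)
Your proof that the optimal values are equal is correct, and it takes a different route from the paper. The paper dualizes the semi-infinite LP with nonnegative measures $\lambda_{t,\se_t,a_t}$ and invokes strong duality, which brings in a Slater point and continuity/compactness conditions that the proposition does not list. It then uses stationarity at the interior optimum together with $\phi_{t,1}\equiv 1$ to obtain the mass balance $M_t=\gamma M_{t-1}$. This makes the aggregated optimal dual at stage $t$ factor as $\gamma^t\alpha^*_t$ with $\alpha^*\in\mathcal D$, and a minimax inequality finishes the argument.

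Your closed-form inner supremum plus backward constant shift is the primal counterpart of that mass-balance identity, and it needs no constraint qualification. The one loose end is $\beta'\in\Omega$, and ``applying the shift only at minimizers'' does not secure it: for $T=1$, every point of $\Omega$ minimizes $G$. Use the interior assumption this way instead. The ALP feasible set $\{\beta:\Delta_t(\beta;\sx_t)\le 0\ \forall t,\sx_t\}$ is convex and the objective is linear. Hence an ALP optimum $\beta^*$ in the interior of $\Omega$ is also optimal over all of $\R^{BT}$: a better feasible $\beta'$ would make $(1-\lambda)\beta^*+\lambda\beta'$ feasible, strictly better, and in $\Omega$ for small $\lambda>0$. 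Therefore $G(\beta)=\hat V_{0,\beta'}(s_0)\ge U_{\mathrm{ALP},B}$ for every $\beta\in\Omega$, whether or not $\beta'\in\Omega$.

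For the solution sets, do not try to prove that $\bar\Delta_t(\beta)=0$ at every minimizer of $G$. That is false, and so is the coincidence claim as stated. The saddle-point objective is invariant, for every fixed $\alpha$ and hence also after the supremum, under each stagewise shift $\beta_{t,1}\mapsto\beta_{t,1}+c_t$. Indeed, $\hat V_{0,\beta}(s_0)$ moves by $c_0$, while the penalty moves by $\sum_{t=0}^{T-1}\gamma^t(\gamma c_{t+1}-c_t)=-c_0$ with $c_T=0$.

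Two cases show the failure concretely. For $T\ge 2$, lower $\beta^*_{1,1}$ of an interior ALP optimum by a small $\epsilon>0$. The resulting point lies in $\Omega$ and attains $U_{\mathrm{ALP},B}$ in the saddle-point problem, yet $\sup_{\sx_1}\Delta_1=\bar\Delta_1(\beta^*)+\epsilon=\epsilon>0$, so it is ALP-infeasible. For $T=1$, $G\equiv\max_{a_0}r_0(s_0,a_0)$ on all of $\Omega$, while the ALP solution set is only the slice $\hat V_{0,\beta}(s_0)=\max_{a_0}r_0(s_0,a_0)$.

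The paper's converse step breaks at exactly this point. Concentrating $\alpha_t$ at a violating $\sx_t$ does not push the objective above $U_{\mathrm{ALP},B}$, because $\hat V_{0,\bar\beta}(s_0)-U_{\mathrm{ALP},B}$ and the other terms $\gamma^{t'}\E_{\alpha_{t'}}[\Delta_{t'}(\bar\beta;\sx_{t'})]$ can be negative. What your argument does establish, and what should replace the claim, is the following. Every ALP optimum is saddle-point optimal. A point $\beta\in\Omega$ is saddle-point optimal if and only if its shifted $\beta'$ is ALP-optimal over $\R^{BT}$. Equivalently, the saddle-point solution set is $\Omega$ intersected with the ALP solution set translated along the constant-weight coordinates $\beta_{0,1},\ldots,\beta_{T-1,1}$. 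Your fallback, coincidence modulo stagewise constant shifts, is therefore the correct conclusion, and you should present it as a correction of the proposition.
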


The saddle-point formulation expresses ALP as an unconstrained minimization over the composite objective\looseness=-1
\[
\min_{\beta\in\Omega}
F_{\mathrm{ALP}}(\beta)
:=
\sup_{\alpha\in\mathcal D}
\hat V_{0,\beta}(s_0)
+
\sum_{t=0}^{T-1}
\gamma^t\,
\E_{\alpha_t}\!\left[
\Delta_t(\beta,\sx_t)
\right].
\]
If the supremum is attained at some optimizer $\alpha^*(\beta)$,
then
$F_{\mathrm{ALP}}(\beta)
=
\hat V_{0,\beta}(s_0)
+
\sum_{t=0}^{T-1}
\gamma^t\,
\E_{\alpha_t^*(\beta)}\!\left[
\Delta_t(\beta,\sx_t)
\right],$
where $\alpha^*(\beta)$ denotes the optimizing dual sequence and therefore depends on the entire weight vector $\beta$.

From the perspective of~\eqref{eq:general-stochastic-form}, ALP corresponds to the special case $m=1$. Although each Bellman deviation $\Delta_t(\beta;\sx_t)$ individually involves only adjacent stage-level blocks $(\beta_t,\beta_{t+1})$, the formulation cannot be decomposed into $T$ independent stagewise components because the optimizing measure $\alpha^*(\beta)$ depends on $\beta$. Specifically, $\alpha^*_t(\beta)$ places mass on exogenous realizations $\sx_t$ where the Bellman violation $\Delta_t(\beta;\sx_t)$ is largest, and this worst-case selection couples the measure to the weight vector. As a result, under the $\beta$-dependent measure $\alpha^*(\beta)$, the composite objective $F_{\mathrm{ALP}}$ is a single component function that depends jointly on the full vector $(\beta_0,\ldots,\beta_{T-1})$, giving $\kappa(F_{\mathrm{ALP}})=T$ under Definition~\ref{def:temporalcoupling}. As in PO, full temporal coupling arises through a single component function ($m=1$), but the source differs. In PO it is the trajectory-level maximization over actions, whereas in ALP it is the adversarial selection of the exogenous measure.\looseness = -1

\subsection{A Weakly Time-Coupled Approximation}\label{sec:weakly-time-model}

The source of full temporal coupling in ALP is not the local Bellman structure but the adversarial selection of the exogenous measure. Each Bellman deviation $\Delta_t(\beta;\sx_t)$ depends only on adjacent stage-level blocks $(\beta_t,\beta_{t+1})$, and full coupling arises only through the outer supremum over $\alpha \in \mathcal{D}$. If this supremum is replaced by a fixed expectation under the exogenous distribution, the local stagewise structure is preserved while horizon-wide coupling is substantially reduced. We refer to the resulting formulation as the \emph{Weakly Time-Coupled Approximation (WTCA)}.\looseness=-1

Specifically, WTCA replaces the supremum over dual measures in the saddle-point formulation~\eqref{eq:alp-saddlepoint} of ALP with an expectation under $\mu$, yielding
\begin{equation}\label{eq:WTCA-def}
\min_{\beta\in\Omega}\;
F_{\mathrm{WTCA}}(\beta)
:= \;
\hat V_{0,\beta}(s_0)
+
\sum_{t=0}^{T-1}\gamma^t\,\E\!\left[\Delta_t(\beta;\sx_t) \right].
\end{equation}
The first term represents the approximate value at the initial state, and the second penalizes Bellman deviations in expectation across stages discounted to stage $0$. Unlike ALP, which enforces $\Delta_t(\beta;\sx_t) \leq 0$ over all exogenous realizations, WTCA penalizes violations in expectation under the exogenous distribution $\mu$.
Importantly, the inner maximization over endogenous states and actions within each $\Delta_t$ is unchanged, so the local structure of the Bellman deviations is preserved.

To express WTCA in the unified framework~\eqref{eq:general-stochastic-form}, define the stagewise component functions
\begin{equation}\label{eq:f_WTCA,t}
f_{\mathrm{WTCA},t}(\beta,\sx_t)
:=
\begin{cases}
\hat V_{0,\beta}(s_0)+\Delta_0(\beta;\sx_0), & t=0,\\[7pt]
\gamma^t\,\Delta_t(\beta;\sx_t), & t\in\tset\setminus\{0\}.
\end{cases}
\end{equation}
Because each $f_{\mathrm{WTCA},t}$ depends only on $\sx_t$, linearity of expectation yields
\begin{equation}\label{eq:WTCA-formulation}
F_{\mathrm{WTCA}}(\beta)
=
\E\!\left[\sum_{t=0}^{T-1}
f_{\mathrm{WTCA},t}(\beta,\sx_t)\right].
\end{equation}
To formalize the temporal coupling of WTCA, the representation~\eqref{eq:WTCA-formulation} fits the general stochastic framework~\eqref{eq:general-stochastic-form} with $m=T$ and $\xi = \mu$, each component corresponding to a stagewise term $f_{\mathrm{WTCA},t}$. The component function $f_{\mathrm{WTCA},t}$ involves only adjacent stage-level blocks $(\beta_t,\beta_{t+1})$, and there is no outer supremum to induce full temporal coupling. Accordingly, the WTCA objective couples only consecutive stages (i.e., $\kappa(F_{\mathrm{WTCA}})=2$), which is independent of $T$ and thus constitutes weak coupling by Definition~\ref{def:temporalcoupling}. Unlike ALP and PO, where $m = 1$, we have $m= T$ in WTCA, but each component function is weakly time coupled.

Proposition \ref{prop:WTCA-comparison} compares WTCA to ALP and PO. For a fixed set of basis functions of size $B$, let $U_{\mathrm{PO},B}$,
$U_{\mathrm{WTCA},B}$, and $U_{\mathrm{ALP},B}$ denote the optimal objective function values of PO, WTCA, and ALP, respectively. \looseness=-1
\begin{proposition}\label{prop:WTCA-comparison}
The following hold for any fixed basis function set of size~$B$ and initial state $s_0$:
\begin{itemize}
    \item[(a)] WTCA is equivalent to a linear program that is a relaxation of ALP and provides a valid upper bound on the optimal policy value $V^*_0(s_0)$:
    \[
V^*_0(s_0)
\;\le\;
U_{\mathrm{WTCA},B}
\;\le\;
U_{\mathrm{ALP},B}.
\]
    \item[(b)] For optimal WTCA basis function weights $\beta^{\mathrm{WTCA}}$, we have
\[
V^*_0(s_0)
\;\le\;
U_{\mathrm{PO},B} \; \le\; F_{\mathrm{PO}}(\beta^{\mathrm{WTCA}})
\;\le\;
U_{\mathrm{WTCA},B}.
\]
\end{itemize} 
\end{proposition}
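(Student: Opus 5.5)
Part (a) goes through the epigraph reformulation, and part (b) through a telescoping comparison of the PO penalized objective with the Bellman deviations.

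\textbf{Part (a).} First I rewrite \eqref{eq:WTCA-def} as a linear program. I introduce epigraph variables $u_t(\sx_t)$ for each $t$ and $\sx_t$, and obtain
\[
\min_{\beta,u}\ \hat V_{0,\beta}(s_0)+\sum_t\gamma^t\E[u_t(\sx_t)]
\]
subject to
\[
u_t(\sx_t)\ge r_t(s_t,a_t)+\gamma\E[\hat V_{t+1,\beta}(s_{t+1})\mid s_t,a_t]-\hat V_{t,\beta}(s_t)
\]
for all $(\se_t,a_t)$. This program is linear in $(\beta,u)$, and at the optimum $u_t=\Delta_t$.

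Next, $U_{\mathrm{WTCA},B}\le U_{\mathrm{ALP},B}$ follows directly. Any ALP-feasible $\beta$ has $\Delta_t\le 0$, so $F_{\mathrm{WTCA}}(\beta)\le \hat V_{0,\beta}(s_0)$. Equivalently, $\mu$ is one admissible choice of $\alpha$ in \eqref{eq:alp-saddlepoint}, so $F_{\mathrm{WTCA}}\le F_{\mathrm{ALP}}$ pointwise, and Proposition~\ref{prop:ALP-saddlepoint} gives the claim.

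The lower bound $V_0^*(s_0)\le U_{\mathrm{WTCA},B}$ I will obtain from part (b), since $V_0^*\le U_{\mathrm{PO},B}\le F_{\mathrm{PO}}(\beta)$ for any $\beta$. A direct route is also available. Let $\pi^*$ be optimal and evaluate the objective along its trajectory. Since $\Delta_t(\beta;\sx_t)$ is at least the deviation at $(\se_t^*,\pi^*_t(s_t))$, taking expectations and telescoping $\E[\gamma^{t+1}\hat V_{t+1}-\gamma^t\hat V_t]$ gives $F_{\mathrm{WTCA}}(\beta)\ge V^{\pi^*}_0(s_0)$ for every $\beta$. Here the tower property and the fact that $\se_{t+1}$ is determined by $(\se_t,a_t)$ let me replace the conditional expectation with the realized next-stage value. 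This holds because $\Delta_t$ takes the max jointly over $\se_t$, so it dominates the deviation at whatever endogenous state the optimal policy reaches.

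\textbf{Part (b).} The outer inequalities are immediate: $U_{\mathrm{PO},B}\le F_{\mathrm{PO}}(\beta^{\mathrm{WTCA}})$ by the definition of the minimum, and $V_0^*\le U_{\mathrm{PO},B}$ by weak duality of information relaxations. The key step is to show $F_{\mathrm{PO}}(\beta)\le F_{\mathrm{WTCA}}(\beta)$ for every $\beta$, and then evaluate at $\beta^{\mathrm{WTCA}}$.

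Fix a path $\sx$ and an action sequence $a$, and add and subtract $\hat V_{t,\beta}$ in the PO objective. Telescoping $\sum_t\gamma^{t+1}\hat V_{t+1,\beta}(s_{t+1})-\gamma^t\hat V_{t,\beta}(s_t)$, with $\hat V_{T,\beta}=0$ and $s_0$ fixed, gives
\[
\sum_t\gamma^t[r_t-\hat{\mathcal P}_t]=\hat V_{0,\beta}(s_0)+\sum_t\gamma^t\big(r_t+\gamma\E[\hat V_{t+1,\beta}(s_{t+1})\mid \sx_t]-\hat V_{t,\beta}(s_t)\big).
\]
Each summand is at most $\gamma^t\Delta_t(\beta;\sx_t)$, because the Markov exogenous process makes $\E[\cdot\mid\sx_t]$ equal to $\E[\cdot\mid s_t,a_t]$ once $\se_{t+1}=h(\se_t,a_t)$ is fixed. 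This bound is uniform in $a$, so $f(\beta,\sx)\le \hat V_{0,\beta}(s_0)+\sum_t\gamma^t\Delta_t(\beta;\sx_t)$. Taking expectations under $\mu$ gives $F_{\mathrm{PO}}(\beta)\le F_{\mathrm{WTCA}}(\beta)$, and at $\beta^{\mathrm{WTCA}}$ the right side equals $U_{\mathrm{WTCA},B}$.

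The main obstacle is the telescoping identity with the conditional expectation. The penalty's conditional expectation is given $\sx_t$ only, with $\se_{t+1}$ treated as fixed. I must check that this matches the Bellman term in \eqref{eq:Delta-Bellmanerror}, which uses deterministic endogenous transitions and action-independent exogenous dynamics. Measurability of the pathwise maximum also needs checking, but it is routine since the action sets are finite.
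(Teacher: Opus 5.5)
Your proof is correct and follows the paper's argument in all essentials. You get $U_{\mathrm{WTCA},B}\le U_{\mathrm{ALP},B}$ by embedding each ALP-feasible $\beta$ with zero slack, you inherit validity from part (b), and you prove $F_{\mathrm{PO}}(\beta)\le F_{\mathrm{WTCA}}(\beta)$ by adding and subtracting $\hat V_{t,\beta}(s_t)$, telescoping, and bounding each stage term by $\Delta_t(\beta;\sx_t)$ uniformly in the action sequence.

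Where you depart from the paper, it is to your advantage. Your free-sign, per-realization epigraph variables $u_t(\sx_t)$ give a genuinely linear program exactly equivalent to WTCA; the paper's aggregate, sign-restricted slacks $u_t\ge 0$ with the convex rather than linear constraint $\E[\Delta_t(\beta;\sx_t)]\le u_t$ suffice for the ALP comparison but do not by themselves exhibit an equivalent LP. Your trajectory argument under $\pi^*$ also proves $F_{\mathrm{WTCA}}(\beta)\ge V_0^*(s_0)$ for every $\beta$ directly, without appealing to information-relaxation weak duality.
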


Part (a) shows that WTCA is a relaxation of ALP whose upper bound is at least as tight. Part (b) shows that evaluating the PO objective $F_{\mathrm{PO}}(\beta^{\mathrm{WTCA}})$ at the WTCA optimal weights $\beta^{\mathrm{WTCA}}$ yields an upper bound no worse than that of WTCA. \cite{brown2014information} establish a similar tightening of the ALP upper bound by constructing dual penalties from the ALP value function approximation via information relaxation and duality. That result leverages the fact that the ALP value function approximation is an upper bound on the MDP value function at each stage and state, a property that need not hold for WTCA since it is a relaxation of ALP. The upper bounding property of PO is directly shown in \citet{desai2012pathwise}, which also proves that the PO upper bound is tighter than the ALP upper bound. 

\subsection{Computational Tradeoffs and Model Selection}\label{subsec:BoundOrderingAndModSelection}

Proposition~\ref{prop:WTCA-comparison} establishes that, for a fixed set of basis functions, PO yields the tightest upper bound, followed by WTCA, and then ALP. However, this ranking assumes that expectations are computed exactly and that each model can be solved exactly, neither of which may hold in practical implementation. Under a fixed computational time budget, the effective quality of an upper bound depends not only on the tightness of the formulation but also on how many sample paths and basis functions can be processed within that budget. These two dimensions of model selection -- bound tightness and solution complexity -- can pull in opposite directions, and we discuss each in turn.

\paragraph{Sample paths.} Exogenous uncertainty must be handled approximately, either via sample average approximation, where a fixed set of sample paths is generated ahead of time, or via a stochastic first-order method, where each iteration draws a new sample path. In either case, processing more sample paths requires more computation. Given a fixed time budget, the number of sample paths that can be handled varies inversely with the solution complexity of the formulation. Since WTCA is weakly coupled across stages, if we construct an algorithm that exploits this structure to solve WTCA more efficiently than PO, it can process more sample paths within the same budget. It is therefore plausible that the estimated WTCA upper bound can be tighter than the PO upper bound in practice, despite being looser in theory for a fixed set of basis functions and exact expectations.

\paragraph{Basis functions.} Proposition~\ref{prop:WTCA-comparison} is stated for a fixed set of $B$ basis functions, but in practice, one can improve upper bounds by expanding this set. We next present a construction under which the upper bounds of all three models converge to the optimal policy value at the same rate in $B$. Let
$\mathcal V_B
:=
\{
\hat V_{\beta} : \beta \in \R^{BT}
\}$
denote the class of value function approximations representable using $B$ basis functions. As $B$ increases, we require $\mathcal V_B$ to be dense in the space of bounded continuous value functions under the sup-norm, i.e.,
\[
\inf_{\hat V\in\mathcal V_B}
\| \hat V - V^* \|_\infty
\;\longrightarrow\; 0
\quad \text{as} \quad B\to\infty.
\]
Random basis functions, parametrized by a vector $\theta$, provide a natural mechanism for constructing such $\mathcal V_B$. Given a distribution $\Theta$, one draws $B$ i.i.d.\ samples $\{\theta_b\}_{b=1}^B$ from $\Theta$ and defines $B$ basis functions as\looseness=-1
\[
\phi_{t,b}(s_t) := \psi_t(s_t;\theta_b), \quad\forall b = 1,\ldots,B.
\]
For example, $\psi_t(\cdot;\theta)$ may be a random Fourier basis function $\psi_t(s;\theta)=\cos(\theta^\top s+\theta_0)$, where $\theta$ is drawn from the Gaussian distribution $\mathcal N(0,\varrho)$ and $\theta_0$ is drawn independently from the uniform distribution on $[-\pi,\pi]$.

Under standard regularity conditions for random basis functions \citep{barron1993,rahimi2008weighted,bach2017,pakiman2024}, Assumption~\ref{assump:random-basis functions} formalizes the requirements and Proposition~\ref{prop:asymptotic-consistency-random} establishes that ALP, WTCA, and PO share the same convergence rate.

\begin{assumption}
\label{assump:random-basis functions}
Suppose that:
\begin{enumerate}
    \item The state space $\Sspace_t$ is compact and the optimal value function $V^*$ is continuous.
    \item Rewards are uniformly bounded.
    \item For each stage $t$, the basis function map $s_t \mapsto \psi_t(s_t;\theta)$ is uniformly bounded and uniformly Lipschitz in $s_t$ for all $\theta$.
    \item The random basis functions $\{\phi_{t,b}\}_{b=1}^B$ are generated i.i.d.\ from a distribution $\Theta$ such that the induced basis class is dense in $C(\Sspace_t)$.
\end{enumerate}
\end{assumption}

\begin{proposition}
\label{prop:asymptotic-consistency-random}
Let $\delta \in (0,1)$. Under Assumption~\ref{assump:random-basis functions}, for each of ALP, WTCA, and PO, the upper bound gap $U_{\cdot,B} - V^*_0(s_0)$ is
\[
\mathcal O\!\left(
\frac{1}{(1-\gamma)\sqrt{B}}
\sqrt{\log\!\frac{1}{\delta}}
\right)
\]
with probability at least $1-\delta$.
\end{proposition}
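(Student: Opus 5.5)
By Proposition~\ref{prop:WTCA-comparison}, on the same basis set we have $V^*_0(s_0)\le U_{\mathrm{PO},B}\le U_{\mathrm{WTCA},B}\le U_{\mathrm{ALP},B}$. It therefore suffices to bound the ALP gap $U_{\mathrm{ALP},B}-V^*_0(s_0)$; the WTCA and PO gaps are sandwiched below it and inherit the same rate. The plan is to exhibit an ALP-feasible weight vector $\bar\beta$ whose stage-0 value exceeds $V^*_0(s_0)$ by at most the stated amount. Since $U_{\mathrm{ALP},B}\le \hat V_{0,\bar\beta}(s_0)$ for any feasible $\bar\beta$, that gives the bound.

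\textbf{Step 1: random-feature approximation of $V^*_t$.} Under Assumption~\ref{assump:random-basis functions}, I will invoke the standard random-features result \citep{rahimi2008weighted,bach2017}. For each stage $t$ there are weights $\tilde\beta_t$ with $\|\hat V_{t,\tilde\beta}-V^*_t\|_\infty\le \epsilon_B$, where
\[
\epsilon_B=\mathcal O\!\left(\tfrac{1}{\sqrt B}\sqrt{\log\tfrac{1}{\delta}}\right)
\]
holds with probability at least $1-\delta$. The ingredients are as follows. Boundedness of $\psi_t$ gives McDiarmid-type concentration of the empirical feature average around its integral representation $\int c_t(\theta)\psi_t(\cdot;\theta)\,d\Theta$. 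Uniform Lipschitzness and compactness of $\Sspace_t$ let a covering argument upgrade pointwise concentration to sup-norm. A union bound over the $T$ stages (taking $\delta/T$ per stage) only affects constants, or logarithmic factors that I would absorb. One point I would have to handle carefully is that $V^*_t$ must lie in the RKHS-type class with bounded coefficient function $c_t$. I would treat this as part of the density/regularity condition (item 4), with a bound on $\|c_t\|_\infty$ proportional to the reward bound divided by $(1-\gamma)$.

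\textbf{Step 2: shift to restore feasibility.} Write $\hat V_t=\hat V_{t,\tilde\beta}$. Define $\bar V_t=\hat V_t+k_t$ with constants $k_t$; this is representable because $\phi_{t,1}\equiv 1$ (the same condition used in Proposition~\ref{prop:ALP-saddlepoint}). Using $V^*_t(s)\ge r_t(s,a)+\gamma\E[V^*_{t+1}\mid s,a]$, the Bellman deviation of $\bar V$ satisfies
\[
\Delta_t\le \epsilon_B+\gamma\epsilon_B+\gamma k_{t+1}-k_t .
\]
I choose $k_T=0$ and $k_t=(1+\gamma)\epsilon_B+\gamma k_{t+1}$, which makes every $\Delta_t\le 0$. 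Summing the recursion gives $k_0\le (1+\gamma)\epsilon_B/(1-\gamma)$. Hence
\[
U_{\mathrm{ALP},B}\le \bar V_0(s_0)\le V^*_0(s_0)+\epsilon_B+k_0=\mathcal O\!\left(\frac{\epsilon_B}{1-\gamma}\right),
\]
which is the claimed rate. I must also ensure $\bar\beta\in\Omega$; I would take $\Omega$ large enough, consistent with its assumed definition.

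\textbf{Main obstacle.} I expect the hardest part to be Step 1, specifically getting a sup-norm (not $L^2$) approximation bound at rate $1/\sqrt B$ with a $\sqrt{\log(1/\delta)}$ dependence. This requires (i) an integral representation of $V^*_t$ with bounded coefficients, which I would argue follows from item 4 together with boundedness of rewards, and (ii) the Lipschitz-covering argument, whose covering-number term I would fold into the constant hidden in $\mathcal O(\cdot)$ since $\Sspace_t$ is compact and its dimension is fixed. The factor $1/(1-\gamma)$ enters both through the coefficient bound and through the shift $k_0$. I would track it so that only one such factor appears, normalizing the coefficient bound by the reward scale where needed.
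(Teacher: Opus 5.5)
Your reduction is the same as the paper's: by Proposition~\ref{prop:WTCA-comparison} the PO and WTCA gaps lie between $0$ and the ALP gap, so only $U_{\mathrm{ALP},B}-V^*_0(s_0)$ needs a rate. The difference is how that rate is obtained. The paper does not prove it; it imports it from Proposition~EC.1 of \citet{pakiman2024} and finishes in two lines. You re-derive it from a sup-norm random-feature approximation of each $V^*_t$ plus a backward constant shift that restores ALP feasibility. Your Step~2 is correct: from $\|\hat V_t-V^*_t\|_\infty\le\epsilon_B$ and $V^*_t(s_t)\ge r_t(s_t,a_t)+\gamma\E[V^*_{t+1}(s_{t+1})\mid s_t,a_t]$, the choice $k_t=(1+\gamma)\epsilon_B+\gamma k_{t+1}$, $k_T=0$, gives $\Delta_t\le 0$ for every $t$ and $U_{\mathrm{ALP},B}-V^*_0(s_0)\le\epsilon_B+k_0\le 2\epsilon_B/(1-\gamma)$. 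This is the standard argument behind ALP rates of this kind, written out directly for the finite-horizon setting with $\hat V_{T,\beta}\equiv 0$. Your route is self-contained and shows which hypotheses are actually used; the paper's is shorter but leaves those hypotheses inside the citation.

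Those hypotheses are where your sketch needs repair. First, the $1/\sqrt B$ rate in Step~1 cannot be argued from item~4 plus bounded rewards, because density in $C(\Sspace_t)$ gives approximation with no rate at all. You must assume outright that each $V^*_t=\int c_t(\theta)\psi_t(\cdot;\theta)\,\Theta(d\theta)$ with $\sup_\theta|c_t(\theta)|\le C$. Your proposed bound $C\propto(\text{reward bound})/(1-\gamma)$ is also unjustified, since the size of $c_t$ reflects the regularity of $V^*_t$, not just its sup-norm. If that bound did hold, it would combine with $k_0$ to give $(1-\gamma)^{-2}$, and no renormalization of rewards removes that. The stated rate comes from treating $C$ as a problem constant hidden in $\mathcal O(\cdot)$. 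Second, the shift needs a constant basis function at every stage, which Assumption~\ref{assump:random-basis functions} (i.i.d.\ random features only) does not provide. You must add the intercept explicitly, as in Proposition~\ref{prop:ALP-saddlepoint} and the experiments. Third, the union bound with $\delta/T$ per stage produces $\sqrt{\log(T/\delta)}$, and $\sqrt{\log T}$ is not a $T$-independent constant, which matters in a paper built around horizon independence. To avoid it, allocate $\delta_t\propto\delta/(t+1)^2$ and run the shift with stagewise errors, $k_t=\epsilon_t+\gamma\epsilon_{t+1}+\gamma k_{t+1}$. The gap is then at most $3\sum_t\gamma^t\epsilon_t$, which trades $\sqrt{\log T}$ for the horizon-free factor $\sqrt{\log(1/(1-\gamma))}$.
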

Since all three formulations converge at the same rate in $B$, the ability to incorporate more basis functions within a fixed computational time budget becomes a key differentiator. As with sample paths, if we construct an algorithm that exploits WTCA's weaker temporal coupling, it can process more basis functions than PO within the same budget, making it plausible to reverse the upper bound ordering established in Proposition~\ref{prop:WTCA-comparison} for fixed basis functions and exact expectations.

In summary, model selection must account for both bound tightness under exact solution and solution complexity, which determines how many sample paths and basis functions can be leveraged within a fixed computational time budget. Since ALP is more strongly coupled than WTCA and yields a weaker upper bound, it is dominated by WTCA on both dimensions. The key question is, therefore, how WTCA compares to PO in practice, which motivates the development of an algorithm that exploits WTCA's weak temporal coupling, without which we cannot realize its computational advantages or assess how they translate into performance relative to PO under a fixed time budget.\looseness=-1

\section{Exploiting Weak Time Coupling via Parallel Computation}\label{sec:stoch-bcd}
This section develops a parallel stochastic block-coordinate descent (PS-BCD) algorithm that exploits weak temporal coupling and uses it to formalize the computational benefits of WTCA. In \S\ref{sec:seqBCD}, we describe a proximal stochastic gradient descent algorithm on a smoothed version of \eqref{eq:general-stochastic-form} to set up the major drivers of computational complexity in first-order methods. In \S\ref{sec:sbcd-algorithm}, we present PS-BCD whose computational complexity for solving WTCA is independent of horizon length, an independence that does not extend to PO owing to its strong temporal coupling. We discuss the resulting model selection implications in \S\ref{sec:model-selection}.

\subsection{Smoothing and Stochastic Gradient Descent}
\label{sec:seqBCD}
We solve WTCA and PO, both cast in the form \eqref{eq:general-stochastic-form}, using first-order methods that update the basis function weights $\beta$ with gradient information. The current state-of-the-art for solving PO is \citet{yang2025improved}, a first-order method based on sample average approximation; in contrast, our approach uses stochastic approximation, which avoids storing a fixed sample and scales more favorably with the horizon. At a given iterate $\beta$, the gradient indicates a direction along which the objective locally decreases. The objective function terms \eqref{eq:f_WTCA,t} and \eqref{eq:po-penalized} corresponding to WTCA and PO, respectively, are nonsmooth because they involve maximizations over collections of affine functions. We begin by discussing a standard smoothing technique to obtain a differentiable objective and then explain the gradient update.

To describe the smoothing transformation in a manner applicable to both WTCA and PO, consider a generic component function
\looseness=-1
\begin{equation}\label{eq:max-of-affine-form}
f_j(\beta,\sx)
=
c_j\max_{u \in \U_j}
\left\{
a_{j,\sx}(u)^\top \beta
+
b_{j,\sx}(u)
\right\},
\end{equation}
where $c_j >0$ is a component scaling constant and $\U_j$ is a finite set indexing the affine terms in the maximization. For fixed $j$ and $\sx$, the slope vector and intercept are defined through the mappings $u \mapsto a_{j,\sx}(u)$ and $u \mapsto b_{j,\sx}(u)$ from $\U_j$ to $\mathbb{R}^{BT}$ and $\mathbb{R}$, respectively. Thus, each element $u\in\U_j$ specifies one affine function of $\beta$ of the form  $a_{j,\sx}(u)^\top\beta + b_{j,\sx}(u)$.\looseness=-1

We replace each max operator with a smooth approximation using the log-sum-exp (LSE) smoothing technique \citep{smoothingfirstordermethods}. The quality of the approximation is controlled by a smoothing parameter $\sigma > 0$. Specifically, the log-sum-exp approximation of $f_j$ is 
\begin{equation}\label{eq:general-smooth-component}
f_{j,\sigma}(\beta,\sx)
=
\sigma\; c_j
\log
\sum_{u\in\U_j}
\exp\!\left(
\frac{a_{j,\sx}(u)^\top\beta + b_{j,\sx}(u)}{\sigma}
\right).
\end{equation}
It is known (see Example 4.5 in \cite{smoothingfirstordermethods}) that $f_{j,\sigma}(\cdot,\sx)$ is convex and continuously differentiable for every fixed $\sx$, and the approximation error is bounded above by a term linear in $\sigma$, i.e.,\looseness=-1
\[
0
\le
f_{j,\sigma}(\beta,\sx)
-
f_j(\beta,\sx)
\le
\sigma\; c_j \log |\U_j|,
\qquad \forall \beta.
\]
Applying this operation componentwise gives the smoothed analogue of \eqref{eq:general-stochastic-form} as
\begin{equation}\label{eq:general-formulation-smooth}
\min_{\beta\in\Omega}\;F_\sigma(\beta)
:=
\E_\xi\!\left[
\sum_{j=1}^m
f_{j,\sigma}(\beta,\sx)
\right].
\end{equation}

We next specify the corresponding affine coefficients when using WTCA and PO.

{\bf WTCA:} From \eqref{eq:WTCA-formulation}, the component index $j$ corresponds to the stage index $t$, so the summation in \eqref{eq:general-formulation-smooth} becomes a sum over stages $t=0,\ldots,T-1$ with $m=T$. 
For each stage $t$ and realization $\sx_t$, the Bellman deviation $\Delta_t(\beta;\sx_t)$ in \eqref{eq:Delta-Bellmanerror} is a maximum over a finite index set $\U_t$, where $\U_0 = \action_0$ and $\U_t := \Endo_t \times \action_t$ for $t\in\tset\setminus\{0\}$; when $t \geq 1$, an element $u_t = (\se_t,a_t) \in \U_t$ pairs an endogenous state with an action. Because $\hat V_{t,\beta}(s)=\phi_t(s)^\top\beta_t$ is linear in $\beta_t$, every term inside this maximization is affine in the full coefficient vector $\beta$ and can be written as $a_{t,\sx_t}(u_t)^\top\beta + b_{t,\sx_t}(u_t)$, where, using $s_t=(\se_t,\sx_t)$ and $\se_{t+1}=h(\se_t,a_t)$, the coefficients are
\begin{equation}\label{eq:WTCA-row0}
a_{0,\sx_0}(a_0)
=
(0,\ \gamma\,\E[\phi_1(s_1)\mid s_0,a_0],\ 0,\ldots,0),
\qquad
\text{ and } \qquad b_{0,\sx_0}(a_0)=r_0(s_0,a_0),
\end{equation}
for the initial stage, and for $t\ge 1$,
\begin{equation}\label{eq:WTCA-rows}
a_{t,\sx_t}(\se_t,a_t)
=
(0,\ldots,0,
\underbrace{-\phi_t(\se_t,\sx_t)}_{t\text{-th block}},\;
\underbrace{\gamma\,\E[\phi_{t+1}(h(\se_t,a_t),\sx_{t+1})\mid\sx_t]}_{(t+1)\text{-th block}},
0,\ldots,0), 
\end{equation}
and
\[
b_{t,\sx_t}(\se_t,a_t)
=
r_t((\se_t,\sx_t),a_t).
\]
Here, $a_{t,\sx_t}(\se_t,a_t)\in\R^{BT}$ is a vector whose entries are indexed by stage.
At $t=0$, the initial endogenous state $\se_0$ is fixed and the terms $\hat V_{0,\beta}(s_0)$ and $-\hat V_{0,\beta}(s_0)$ inside $\Delta_0$ cancel, which is why $a_{0,\sx_0}(a_0)$ has no entry in the $\beta_0$ block. More generally, $a_{t,\sx_t}$ is sparse and contains nonzero entries in at most two adjacent stage blocks corresponding to $\beta_t$ and $\beta_{t+1}$. Hence, setting the component scaling factors to $c_t = \gamma^t$ and defining $f_{t,\sigma}(\beta,\sx)$ via \eqref{eq:general-smooth-component} gives $F_\sigma(\beta) \equiv F_{\mathrm{WTCA},\sigma}(\beta)$.\looseness=-1

\smallskip
{\bf PO:}
The PO trajectory-level objective in \eqref{eq:po-penalized} contains a single component ($m=1$) defined as a maximum over the finite set $\U:=\action$ of full-horizon action sequences $u=(a_0,\ldots,a_{T-1})$.
By the same linearity argument as for WTCA, the term inside this maximum corresponding to a given $u$ and realization $\sx = (\sx_0,\ldots,\sx_{T-1})$ is affine in $\beta$ and can be written as $a_{\sx}(u)^\top\beta + b_{\sx}(u)$, where, using $s_t=(\se_t,\sx_t)$ and $\se_{t+1}=h(\se_t,a_t)$,
\begin{equation}\label{eq:PO-rows}
b_{\sx}(u)
=
\sum_{t=0}^{T-1}\gamma^t r_t((\se_t,\sx_t),a_t),
\qquad \text{ and }\qquad
a_{\sx}(u)
=
(0,
c_1(\sx,a_0),
\ldots,
c_T(\sx,a_{T-1})),
\end{equation}
with stagewise coefficients
\[
c_{t+1}(\sx,a_t)
=
\gamma^{t+1}
\Big(
-\phi_{t+1}(h(\se_t,a_t),\sx_{t+1})
+
\E[\phi_{t+1}(h(\se_t,a_t),\sx_{t+1})\mid\sx_t]
\Big)\in\R^B.
\]
Unlike WTCA, where each affine term involves only two adjacent stage blocks, the coefficient vector $a_{\sx}(u)$ in PO contains nonzero entries across nearly all stage blocks. Substituting these coefficients into \eqref{eq:general-smooth-component} with $c_1=1$ yields the smoothed PO objective $F_\sigma(\beta)\equiv F_{\mathrm{PO},\sigma}(\beta)$. \looseness=-1

Having specified the smoothed objective \eqref{eq:general-formulation-smooth} for both WTCA and PO, we now describe how to solve it using gradient-based updates. At iteration $k$ and iterate $\beta^k$, constructing the next iterate by minimizing a local quadratic model of the objective around the current iterate is a common strategy: \looseness=-1
\begin{equation}
\label{eq:proximal-update-global}
\beta^{k+1}
=
\arg\min_{z\in\Omega}
\left\{
\langle \nabla_\beta F_\sigma(\beta^k), z-\beta^k\rangle
+
\frac{L}{2\alpha_k}\|z-\beta^k\|_2^2
\right\},
\end{equation}
where the quadratic term $\|z-\beta^k\|_2^2$ measures the deviation of the new iterate from $\beta^k$, $\alpha_k>0$ is a scaling parameter that controls the effect of this quadratic term, and $L$ is the Lipschitz constant of the gradient of $F_\sigma$. Intuitively, $L$ captures the curvature of $F_\sigma$ (i.e., how quickly the gradient changes as $\beta$ varies). Formally, the constant 
$L$ satisfies
\begin{equation}
\label{eq:global-smoothness}
\|\nabla_\beta F_\sigma(\beta+d)-\nabla_\beta F_\sigma(\beta)\|_2
\le
L\|d\|_2.
\end{equation}
Because the curvature is bounded, the objective satisfies the quadratic bound\looseness=-1
\begin{equation}
\label{eq:quadratic-upper-bound}
F_\sigma(\beta+d)
\le
F_\sigma(\beta)
+
\langle\nabla_\beta F_\sigma(\beta),d\rangle
+
\frac{L}{2}\|d\|_2^2,
\end{equation}
which is leveraged in the update \eqref{eq:proximal-update-global}. 

The solution of \eqref{eq:proximal-update-global} simplifies to the update $\beta^{k+1}
=
\beta^k
-
\frac{\alpha_k}{L}\nabla_\beta F_\sigma(\beta^k),
$
if $\beta^{k+1} \in \Omega$. Otherwise, an additional projection onto $\Omega$ is needed, i.e., $\beta^{k+1}
=\mathrm{proj}_\Omega(\beta^k - \frac{\alpha_k}{L}\nabla_\beta F_\sigma(\beta^k))$. In either case, the update requires the gradient
$\nabla_\beta F_\sigma(\beta)
=
\E_\xi\!\left[\sum_{j=1}^m \nabla_\beta f_{j,\sigma}(\beta,\sx)\right],$
where differentiation and expectation are interchanged because the log-sum-exp components are smooth with bounded gradients on $\Omega$. We approximate the expectation using a stochastic gradient $G(\beta,\sx)$. Denoting by $\bar\sx$ an independent sample from $\xi$, we define
\[\nabla_\beta F_\sigma(\beta) \approx G(\beta,\bar\sx):= \sum_{j = 1}^m \nabla_\beta f_{j,\sigma}(\beta,\bar\sx).\]
The stochastic gradient update without projection becomes $\beta^{k+1}
=
\beta^k
-
\frac{\alpha_k}{L}G(\beta^k,\bar\sx)$. The curvature in this update plays an important role because the step size $\frac{\alpha_k}{L}$ is inversely proportional to $L$, and larger curvature constants result in smaller step sizes. This in turn worsens the iteration complexity, defined as the number of iterations needed to find a solution whose objective function value is within $\epsilon$ of $\min_{\beta\in\Omega}\;F_\sigma(\beta)$. The computational complexity is the product of bounds on the iteration complexity and the per-iteration cost of computing the stochastic gradient.

\subsection{A Parallel Stochastic Block Coordinate Descent}
\label{sec:sbcd-algorithm}

BCD partitions the basis function weights into blocks. We partition by stage, $\beta=(\beta_0,\ldots,\beta_{T-1})$ with $\beta_t\in\mathbb{R}^B$, aligning each block with the unit of temporal coupling. At iteration $k$, block-coordinate updates modify one
stage block $\beta^k_t$ to $\beta^{k+1}_t$ while keeping the other blocks fixed. As in gradient descent, $\beta^{k+1}_t$ is determined by a proximal update 
\begin{equation}\label{eq:seqProxUpdate}
\beta_t^{k+1}
=
\arg\min_{z\in\Omega_t}
\left\{
\langle\nabla_{\beta_t}F_\sigma(\beta^k),z-\beta_t^k\rangle
+
\frac{L_t}{2\alpha_k}\|z-\beta_t^k\|_2^2
\right\},
\end{equation}
where $\Omega_t$ denotes the projection of $\Omega$ onto block $t$, and $L_t$ is the Lipschitz constant of the partial gradient of $F_\sigma$ with respect to block $t$. Analogous to the role of $L$ in gradient descent, $L_t$ captures the curvature of $F_\sigma$ as only $\beta_t$ is changed, and similarly governs the step size and iteration complexity.

Rather than updating a single block per iteration, the parallel BCD we propose updates all $T$ blocks simultaneously. As in \eqref{eq:seqProxUpdate}, each block update uses the partial gradient with respect to that block, but because all blocks change at once, the block-wise Lipschitz constant $L_t$ does not account for the interaction among blocks updated simultaneously. To capture this interaction, we leverage Expected Separable Overapproximation (ESO) theory for parallel block-coordinate methods
\citep{richtarik2014iteration,AccCD}. Let $d=(d_0,\ldots,d_{T-1})$ denote increments applied
to all stage blocks. The ESO condition guarantees
\begin{equation}\label{eq:ESOequation}
F_\sigma(\beta+d)
\le
F_\sigma(\beta)
+
\langle \nabla_\beta F_\sigma(\beta), d\rangle
+
\frac12\sum_{t=0}^{T-1}\nu_t\|d_t\|_2^2 ,
\end{equation}
for appropriately chosen curvature weights $\nu_t$, $t\in\tset$. This inequality shows that even though the objective may couple multiple stage blocks, the increase $F_\sigma(\beta+d) - F_\sigma(\beta)$ from a perturbation $d$ can be bounded by a separable quadratic model. As a result, we can perform a parallel block update with $L_t$ in \eqref{eq:seqProxUpdate} replaced by $\nu_t$:
\begin{equation}\label{eq:parallel-block-update}
\beta_t^{k+1}
=
\arg\min_{z\in\Omega_t}
\left\{
\langle\nabla_{\beta_t}F_\sigma(\beta^k),z-\beta_t^k\rangle
+
\frac{\nu_t}{2\alpha_k}\|z-\beta_t^k\|_2^2
\right\}.
\end{equation}
This optimization simplifies to
$\beta_t^{k+1}
=
\beta_t^k
-
\frac{\alpha_k}{\nu_t}
\nabla_{\beta_t}F_\sigma(\beta^k)$ without projection. Let $L_{jt}$ denote the Lipschitz constant of the block-$t$ partial gradient of $\E_\xi[f_{j,\sigma}(\beta,\sx)]$. Since component $j$ involves stage blocks with indices in $C_j$, the curvature weight $\nu_t$ is
\begin{equation}\label{eq:nu-def}
\nu_t
=
\sum_{j:\,t\in C_j}
\kappa_j\,L_{jt}.
\end{equation}
This expression makes the role of temporal coupling explicit. Each component contributes curvature to every block it couples, and that contribution is amplified by $\kappa_j$, the number of blocks jointly involved in that component. When $\kappa_j$ is bounded independently of the horizon length $T$ (weak temporal coupling), the curvature weights $\nu_t$ remain bounded as $T$ grows. When $\kappa_j=T$ (full temporal coupling), each block inherits curvature contributions that scale linearly with $T$. Larger curvature weights force smaller steps in first-order updates, thereby increasing the number of iterations required for convergence. Thus, higher temporal coupling translates to smaller step sizes and worsens iteration complexity.

\begin{algorithm}[ht!]
\caption{Parallel Stochastic Block Coordinate Descent (PS-BCD)}
\label{alg:SBCD}
\begin{algorithmic}[1]
\Require Initial point $\beta^1 \in \Omega$, iteration limit $K$,
stepsize schedule $\alpha_k = 1/(2\sqrt{k+1})$, and curvature weights $\{\nu_t\}_{t\in\tset}$
\For{$k=1,2,\ldots,K$}
    \State Independently draw an exogenous trajectory sample $\sx^k$ from  $\mu$
    \For{each stage $t \in \tset$ \textbf{in parallel}}
        \State Compute stochastic gradient $G_t(\beta^k,\sx^k)$
        \State Update $\beta^{k+1}_t =\mathrm{proj}_{\Omega}\left(\beta_t^k - \dfrac{\alpha_k}{\nu_t}\, G_t(\beta^k,\sx^k)\right)$
    \EndFor
\EndFor
\State \textbf{Output:}
$\displaystyle
\bar\beta^K =
\frac{\sum_{k=2}^{K} \alpha_{k-1}\,\beta^k}
{\sum_{k=1}^{K-1} \alpha_k}
$
\end{algorithmic}
\end{algorithm}

 Algorithm \ref{alg:SBCD} summarizes PS-BCD. The inputs are an initial weight vector $\beta^1$, a decreasing step-size schedule $\alpha_k$ (a standard requirement in first-order methods), and the curvature weights $\nu_t$. These weights are available in closed form under our smoothing approach; as the expressions require additional notation, we relegate them to \S\ref{sec:EC-log-sum-exp-smoothing-curvature}.

At each iteration, the algorithm samples an exogenous trajectory $\sx^k$ from $\mu$ and then, in parallel for each $t \in \tset$, computes the block-$t$ stochastic gradient
\[
G_t(\beta^k,\sx^k)
:=
\sum_{j:t \in C_j} \nabla_{\beta_t} f_{j,\sigma}(\beta^k,\sx^k),
\]
and updates the block-$t$ weights. Given the log-sum-exp form of $F_\sigma(\beta)$, we can compute $G_t(\beta^k,\sx^k)$ in closed form as
\begin{equation}\label{eq:blocktstochGrad}
G_t(\beta^k,\sx^k)
=
\sum_{j:t \in C_j}
\sum_{u\in\U_j}
p_{j,u}(\beta^k,\sx^k)\,
a_{j,\sx^k}^{(t)}(u),    
\end{equation}
where $a_{j,\sx}^{(t)}(u)$ denotes the block-$t$ portion of the affine
vector $a_{j,\sx}(u)$, and
\[
p_{j,u}(\beta^k,\sx^k)
=
\frac{
\exp\!\left((a_{j,\sx^k}(u)^\top\beta^k+b_{j,\sx^k}(u))/\sigma\right)
}{
\sum_{v\in\U_j}
\exp\!\left((a_{j,\sx^k}(v)^\top\beta^k+b_{j,\sx^k}(v))/\sigma\right)
}.
\]
After $K$ iterations, the algorithm outputs a weighted average of the iterates $\bar\beta^K$, a standard scheme in stochastic approximation that ensures convergence at the rate stated below.

Theorem \ref{thm:psbcd-convergencerate} presents the convergence rate of Algorithm \ref{alg:SBCD} for solving \eqref{eq:general-formulation-smooth}. The bound is expressed in terms of a weighted norm that encodes the curvature weights $\nu_t$, making the influence of temporal coupling on convergence explicit. Specifically, define
\[
\|\beta\|_\nu^2
:=
\sum_{t=0}^{T-1}\nu_t\|\beta_t\|_2^2,
\qquad
\|\beta\|_{\nu,*}^2
:=
\sum_{t=0}^{T-1}\nu_t^{-1}\|\beta_t\|_2^2,
\]
and let $\beta^*$ denote an optimal solution of $\min_{\beta\in\Omega}F_\sigma(\beta)$. We use the common notation $\tilde{\mathcal O}(\cdot)$ to suppress logarithmic factors.
\begin{theorem}\label{thm:psbcd-convergencerate}
Suppose the stochastic gradient used in Algorithm~\ref{alg:SBCD} is an unbiased estimator of $\nabla_\beta F_\sigma(\beta)$ and satisfies the bounded-variance condition
\[
\E\!\left[
\left\|
G(\beta^k,\sx^k)
-
\nabla_\beta F_\sigma(\beta^k)
\right\|_{\nu,*}^2
\;\middle|\;
\beta^k
\right]
\le
\chi^2
\qquad
\text{for all }k .
\]
Then the averaged iterate $\bar\beta^K$ produced by Algorithm~\ref{alg:SBCD} satisfies
\begin{equation}\label{eq:complexitybound}
F_{\sigma}(\bar\beta^K)-F_{\sigma}(\beta^*)
=
\tilde{\mathcal O}\!\left(
\frac{1}{\sqrt K}
\left(
\sum_{t=0}^{T-1} \nu_t\|\beta_t^1-\beta_t^*\|_2^2
+
\chi^2
\right)
\right).
\end{equation}

\end{theorem}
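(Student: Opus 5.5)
The plan is to recast the parallel block update as a single proximal stochastic gradient step in the weighted geometry $\|\cdot\|_\nu$, and then run the standard stochastic mirror-descent argument with $\|\cdot\|_\nu$ as the primal norm and $\|\cdot\|_{\nu,*}$ as its dual. Summing the block updates \eqref{eq:parallel-block-update} over $t$ shows that $\beta^{k+1}$ solves
\[
\min_{z\in\Omega}\Big\{\langle G(\beta^k,\sx^k),z-\beta^k\rangle+\frac{1}{2\alpha_k}\|z-\beta^k\|_\nu^2\Big\}.
\]
This uses the fact that $\Omega$ is a product of blocks, e.g.\ a box; for a ball I would simply assume the blockwise projection is used. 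The ESO inequality \eqref{eq:ESOequation} with weights \eqref{eq:nu-def} says exactly that $F_\sigma$ is $1$-smooth with respect to $\|\cdot\|_\nu$. I would take this as established from the ESO theory of \citet{richtarik2014iteration,AccCD}, with the log-sum-exp constants $L_{jt}$ deferred to \S\ref{sec:EC-log-sum-exp-smoothing-curvature}.

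The one-step inequality comes next. The three-point property of the prox step gives, for any $z\in\Omega$,
\[
\alpha_k\langle G^k,\beta^{k+1}-z\rangle\le \tfrac12\|\beta^k-z\|_\nu^2-\tfrac12\|\beta^{k+1}-z\|_\nu^2-\tfrac12\|\beta^{k+1}-\beta^k\|_\nu^2.
\]
Write $G^k=\nabla F_\sigma(\beta^k)+\delta^k$. I then combine three bounds:
\begin{itemize}
\item Convexity gives $F_\sigma(\beta^k)-F_\sigma(z)\le\langle\nabla F_\sigma(\beta^k),\beta^k-z\rangle$.
\item The ESO smoothness gives $F_\sigma(\beta^{k+1})\le F_\sigma(\beta^k)+\langle\nabla F_\sigma(\beta^k),\beta^{k+1}-\beta^k\rangle+\tfrac12\|\beta^{k+1}-\beta^k\|_\nu^2$.
\item The noise cross term $-\langle\delta^k,\beta^{k+1}-\beta^k\rangle$ is handled by Young's inequality, bounding it by $\alpha_k\|\delta^k\|_{\nu,*}^2+\frac{1}{4\alpha_k}\|\beta^{k+1}-\beta^k\|_\nu^2$.
\end{itemize}
Combining these yields
\[
\alpha_k\big(F_\sigma(\beta^{k+1})-F_\sigma(z)\big)\le \tfrac12\|\beta^k-z\|_\nu^2-\tfrac12\|\beta^{k+1}-z\|_\nu^2-\big(\tfrac12-\alpha_k-\tfrac14\big)\|\beta^{k+1}-\beta^k\|_\nu^2+\alpha_k^2\|\delta^k\|_{\nu,*}^2-\alpha_k\langle\delta^k,\beta^k-z\rangle.
\]
The step-size choice $\alpha_k=1/(2\sqrt{k+1})\le 1/(2\sqrt2)$ must make the $\|\beta^{k+1}-\beta^k\|_\nu^2$ coefficient nonpositive. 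This is the main obstacle, since $\alpha_k\le1/4$ fails at $k=1$. I would adjust the Young's split, moving more weight onto the $\alpha_k^2\|\delta^k\|^2$ term at the cost of a constant, so that the residual coefficient is nonnegative for all $k\ge1$.

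To finish, set $z=\beta^*$ and take conditional expectations. Unbiasedness kills $\E[\langle\delta^k,\beta^k-\beta^*\rangle\mid\beta^k]$, because $\beta^k$ is independent of $\sx^k$, and the variance assumption bounds $\E\|\delta^k\|_{\nu,*}^2$ by $\chi^2$. Summing over $k=1,\dots,K-1$ telescopes the distance terms. Jensen's inequality on the weighted average $\bar\beta^K=\sum_{k}\alpha_{k}\beta^{k+1}/\sum_k\alpha_k$, together with convexity of $F_\sigma$, then gives
\[
\E[F_\sigma(\bar\beta^K)]-F_\sigma(\beta^*)\le\frac{\tfrac12\|\beta^1-\beta^*\|_\nu^2+\chi^2\sum_k\alpha_k^2}{\sum_k\alpha_k}.
\]
With $\sum_{k<K}\alpha_k=\Theta(\sqrt K)$ and $\sum_{k<K}\alpha_k^2=\Theta(\log K)$, the right-hand side is $\tilde{\mathcal O}\big((\|\beta^1-\beta^*\|_\nu^2+\chi^2)/\sqrt K\big)$. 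This is \eqref{eq:complexitybound}, with $\|\beta^1-\beta^*\|_\nu^2=\sum_t\nu_t\|\beta^1_t-\beta^*_t\|_2^2$.
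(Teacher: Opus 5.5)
Your argument is correct and is essentially the paper's proof. The paper obtains this theorem by specializing its general $\tau$-nice analysis to $n=\tau=T$, using the same ingredients you use: the ESO read as a separable quadratic upper bound in $\|\cdot\|_\nu$, the three-point proximal inequality, Young's inequality on the cross term $\langle\delta^k,\beta^{k+1}-\beta^k\rangle$, unbiasedness plus convexity, telescoping, and the $\alpha_{k-1}$-weighted average output by Algorithm~\ref{alg:SBCD}.

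The step-size ``obstacle'' you flag is only a bookkeeping slip. After scaling by $\alpha_k$, the smoothness term contributes $\tfrac{\alpha_k}{2}\|\beta^{k+1}-\beta^k\|_\nu^2$, not $\alpha_k\|\beta^{k+1}-\beta^k\|_\nu^2$, so the residual coefficient is $\tfrac14-\tfrac{\alpha_k}{2}>0$ for every $k\ge1$. In any case, the paper does exactly the re-split you propose: it spends the whole remaining quadratic in Young's inequality (constant $\nu_t(1/\alpha_k-1)$ per block). In your scaling this leaves $\tfrac{\alpha_k^2}{2(1-\alpha_k)}\|\delta^k\|_{\nu,*}^2\le\alpha_k^2\|\delta^k\|_{\nu,*}^2$, because $\alpha_k\le\tfrac12$.
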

The bounded-variance assumption is standard in stochastic approximation and bounds the conditional second moment of the stochastic gradient noise, measured in the weighted dual norm $\|\cdot\|_{\nu,*}$, uniformly by $\chi^2$. Because the algorithm outputs an average of the iterates, the effect of this noise diminishes at rate $1/\sqrt{K}$ and does not accumulate over iterations. Corollary \ref{cor:iterComplexPSBCD} translates the convergence rate of Theorem \ref{thm:psbcd-convergencerate} into iteration complexity and then specializes it to WTCA and PO.
\begin{corollary}\label{cor:iterComplexPSBCD}
    Suppose the assumptions of Theorem \ref{thm:psbcd-convergencerate} and $\beta^1 = 0$ hold. For any $\epsilon > 0$, PS-BCD achieves $F_\sigma(\bar\beta^K)-F_\sigma(\beta^*)\le \epsilon$ in $\tilde{\mathcal O}\Big(\frac{1}{\epsilon^2}\big(\sum_{t=0}^{T-1}\nu_t\big)^2\Big)$ iterations, which is bounded above by $\tilde{\mathcal O}\Big(1/\big(\sigma(1-\gamma)\epsilon\big)^2\Big)$ and $\tilde{\mathcal O}\Big(|\action|^2T^2/\big(\sigma(1-\gamma)\epsilon\big)^2\Big)$ for WTCA and PO, respectively, as shown in \S\ref{sec:EC-K},  where $\chi^2$ is a constant independent of $T$ for both formulations (Corollary \eqref{cor:dual-noise-specializations}).\looseness=-1
\end{corollary}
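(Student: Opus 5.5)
Starting from the rate in Theorem~\ref{thm:psbcd-convergencerate} with $\beta^1=0$, the initial-distance term is
\[
\sum_{t=0}^{T-1}\nu_t\|\beta_t^*\|_2^2\le D^2\sum_{t=0}^{T-1}\nu_t,
\]
where $D$ bounds the block diameter of the compact set $\Omega$; by assumption the optimal solutions lie in its interior. Treating $\chi^2$ as a constant, we require
\[
\frac{1}{\sqrt K}\Big(D^2\sum_t\nu_t+\chi^2\Big)\le\epsilon,
\]
which is implied by
\[
K=\tilde{\mathcal O}\Big(\epsilon^{-2}\big(\textstyle\sum_t\nu_t+\chi^2\big)^2\Big).
\]
This is $\tilde{\mathcal O}\big(\epsilon^{-2}(\sum_t\nu_t)^2\big)$ once $\chi^2$ is dominated, i.e.\ absorbed into the $\tilde{\mathcal O}$ as a $T$-independent constant, invoking Corollary~\ref{cor:dual-noise-specializations}. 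The general iteration bound is thus mechanical.

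The substance lies in bounding $\sum_t\nu_t$ through~\eqref{eq:nu-def}, which requires the blockwise Lipschitz constants $L_{jt}$ of the log-sum-exp components. For a smoothed component~\eqref{eq:general-smooth-component}, the Hessian in $\beta$ is
\[
\frac{c_j}{\sigma}\,\mathrm{Cov}_{p_j}\big(a_{j,\sx}(u)\big),
\]
so the block-$t$ curvature satisfies
\[
L_{jt}\le \frac{c_j}{\sigma}\,\E_\xi\Big[\max_{u}\|a^{(t)}_{j,\sx}(u)\|_2^2\Big].
\]
With bounded basis functions (say $\|\phi_t\|_2\le\Phi$), each block of $a$ has norm $\mathcal O(\Phi)$, so $L_{jt}=\mathcal O(c_j\Phi^2/\sigma)$.

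\textbf{WTCA.} Here $c_t=\gamma^t$, $\kappa_t\le2$, and block $t$ appears only in components $t-1$ and $t$. Hence
\[
\nu_t\le 2\big(L_{t-1,t}+L_{t,t}\big)=\mathcal O\big((\gamma^{t-1}+\gamma^t)/\sigma\big),
\]
and summing the geometric series gives $\sum_t\nu_t=\mathcal O\big(1/(\sigma(1-\gamma))\big)$. Squaring this yields the stated WTCA bound, independent of $T$.

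\textbf{PO.} Here $m=1$ and $\kappa_1=T$, so $\nu_t=T L_{1t}$. The block-$t$ coefficient is $c_t(\sx,a_{t-1})$, which carries a factor $\gamma^t$ and differences of $\phi$ values, giving norm $\mathcal O(\gamma^t\Phi)$. The smoothing index set is $\U=\action$, and I would bound the curvature contribution carrying the $|\action|$ factor via the log-sum-exp structure, following the explicit curvature expressions in \S\ref{sec:EC-log-sum-exp-smoothing-curvature}. That gives $L_{1t}=\mathcal O(|\action|\gamma^t/\sigma)$, hence
\[
\sum_t\nu_t=\mathcal O\big(T|\action|/(\sigma(1-\gamma))\big),
\]
and squaring gives the PO bound.

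The main obstacle is obtaining tight blockwise Lipschitz constants consistent with the ESO condition~\eqref{eq:ESOequation}. Specifically, I must verify that $\nu_t=\sum_j\kappa_jL_{jt}$ indeed certifies ESO, which I would take from the partial-separability result of \citet{richtarik2016parallel} applied to the expectation by Jensen. I must also pin down exactly where the $|\action|$ factor arises for PO, rather than, say, $\gamma^{2t}$ improving things. If my covariance bound gives no $|\action|$ dependence, the stated bound still holds as an upper bound, so I would present the cruder estimate from the EC formulas.
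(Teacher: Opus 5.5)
Your proposal is correct and follows the paper's argument in \S\ref{sec:EC-K}: set $\beta^1=0$ in the $n=\tau=T$ convergence bound, use compactness of $\Omega$ to get $\|\beta^*\|_\nu^2=\mathcal O(\sum_t\nu_t)$, absorb $\chi^2$ via Corollary~\ref{cor:dual-noise-specializations}, and sum geometric per-block curvature bounds to obtain $\sum_t\nu_t=\mathcal O(1/(\sigma(1-\gamma)))$ for WTCA and $\mathcal O(T|\action|/(\sigma(1-\gamma)))$ for PO. The absorption of $\chi^2$ tacitly needs $\sigma=\mathcal O(1)$, because $\chi^2_{\mathrm{WTCA}}=\mathcal O(\sigma/(1-\gamma))$; the paper makes the same assumption in its proof of Proposition~\ref{prop:complexity-comparison}.

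Your one deviation is the covariance bound $\|\nabla^2_{\beta_t\beta_t}f_{j,\sigma}\|\le (c_j/\sigma)\max_u\|a^{(t)}_{j,\sx}(u)\|_2^2$. It is sharper than the paper's Frobenius estimate $\|A^{(t)}\|^2\le|\U_j|R^2$, which is the sole source of the $|\action|$ (and $|\Endo_t||\action_t|$) factors. So your fallback is sound: the sharper estimate only shrinks $\sum_t\nu_t$, and the $\chi^2$ bound survives because the proof of Lemma~\ref{lem:dual-noise-weighted} needs only row-norm bounds. Hence it still implies the stated rates.
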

The main takeaway is that the iteration complexity of PS-BCD when solving WTCA is independent of $T$, while for PO it grows nonlinearly with $T$. The factor $|\action|=|\action_0|\times\ldots\times|\action_{T-1}|$ in the PO bound also embeds dependence on $T$, since it is a product over $T$ per-stage action sets and therefore grows with the horizon even when each per-stage action set is small. Its presence in the iteration complexity is a consequence of the smoothing technique we employ.\looseness=-1

Understanding the overall computational complexity of PS-BCD requires incorporating the per-iteration cost of computing the stochastic gradient \eqref{eq:blocktstochGrad}. Computing $p_{j,u}(\beta^k,\sx^k)$ requires $\mathcal O(|\U_j|\kappa_j B)$ operations. Therefore, computing $G_t(\beta^k,\sx^k)$ takes $\mathcal O(\sum_{j:t \in C_j}|\U_j|\kappa_j B)$. For WTCA, the per-iteration complexity is $\mathcal O(B)$ because $m = T$, $|\{j:t \in C_j\}| = 2$, $\kappa_j = 2$, and $\U_t=\Endo_t\times\action_t$ is finite and independent of the horizon length. In contrast, for PO, this computation requires $\mathcal O(|\action|\,TB)$ since $m= 1 = |\{j:t \in C_j\}|$, $\kappa_1 = T$, and $\U_1=\action$. The factor of $|\action|$ is intrinsic to PO's trajectory-level structure: evaluating the inner maximization in \eqref{eq:po-penalized} requires enumerating all full-horizon action sequences regardless of how that maximization is handled. Moreover, since $|\action|$ grows with $T$, this per-iteration cost grows nonlinearly with the horizon. Based on these observations, Proposition \ref{prop:complexity-comparison} states the computational complexity of using PS-BCD to solve WTCA and PO, and also compares the analogous complexities when using stochastic gradient descent (SGD) instead.

\begin{proposition}\label{prop:complexity-comparison}
Let $\epsilon>0$ denote the target optimality tolerance and suppose the assumptions underlying Theorem \ref{thm:psbcd-convergencerate} hold.
The computational complexities of finding $\epsilon$-optimal solutions to WTCA and PO using SGD and PS-BCD are as follows:

\begin{table}[H]
\centering
\begin{tabular}{l|cc}
\toprule
 & SGD & PS-BCD \\
\midrule
WTCA 
&
$\displaystyle
\tilde{\mathcal O}\!\left(
\frac{TB}{\epsilon^2\sigma^2(1-\gamma)^2}
\right)
$
&
$\displaystyle
\tilde{\mathcal O}\!\left(
\frac{B}{\epsilon^2\sigma^2(1-\gamma)^2}
\right)
$
\\[20pt]

PO
&
$\displaystyle
\tilde{\mathcal O}\!\left(
\frac{|\action|^3\,TB}{\epsilon^2\sigma^2(1-\gamma)^2}
\right)
$
&
$\displaystyle
\tilde{\mathcal O}\!\left(
\frac{|\action|^3\,T^3 B}{\epsilon^2\sigma^2(1-\gamma)^2}
\right)
$
\\
\bottomrule
\end{tabular}
\end{table}
\end{proposition}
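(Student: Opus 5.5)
The plan is to obtain each entry of the table as the product of an iteration complexity bound and a per-iteration cost bound. The PS-BCD column comes directly from Corollary~\ref{cor:iterComplexPSBCD} together with the per-iteration costs derived just before the proposition. The SGD column comes from specializing Theorem~\ref{thm:psbcd-convergencerate} to a single block.

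\textbf{PS-BCD column.} For WTCA, Corollary~\ref{cor:iterComplexPSBCD} gives $\tilde{\mathcal O}(1/(\sigma(1-\gamma)\epsilon)^2)$ iterations. Each stage block uses only $|\{j:t\in C_j\}|=2$ components with $\kappa_j=2$ and $|\U_t|$ independent of $T$. The $T$ block updates run in parallel, so the per-iteration cost is $\mathcal O(B)$ and the total is $\tilde{\mathcal O}(B/(\epsilon^2\sigma^2(1-\gamma)^2))$. For PO, the corollary gives $\tilde{\mathcal O}(|\action|^2T^2/(\sigma(1-\gamma)\epsilon)^2)$ iterations. The per-block cost is $\mathcal O(|\action|TB)$, because the softmax weights $p_{1,u}$ require evaluating all $|\action|$ affine terms, each of length $TB$. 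Multiplying gives $|\action|^3T^3B/(\epsilon^2\sigma^2(1-\gamma)^2)$.

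\textbf{SGD column.} I would treat SGD as Theorem~\ref{thm:psbcd-convergencerate} with a single block and weight $\nu=L$, where $L$ is the global gradient Lipschitz constant of $F_\sigma$. This yields $\tilde{\mathcal O}(L^2/\epsilon^2)$ iterations, with the constants $\|\beta^1-\beta^*\|$ and $\chi^2$ absorbed as in the corollary.
\begin{itemize}
\item[] For WTCA, $L\le\sum_t \gamma^t L_{\mathrm{LSE},t}$. Each LSE Hessian is bounded by $\|a_{t,\sx}\|^2/\sigma$, so $L=\mathcal O(1/(\sigma(1-\gamma)))$ and the iteration count is $\tilde{\mathcal O}(1/(\sigma(1-\gamma)\epsilon)^2)$. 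The full gradient costs $\mathcal O(TB)$ serially, since there are $T$ components each of cost $\mathcal O(B)$. The total is $TB/(\epsilon^2\sigma^2(1-\gamma)^2)$.
\item[] For PO, I expect $L=\mathcal O(|\action|/(\sigma(1-\gamma)))$, which gives $|\action|^2/(\sigma(1-\gamma)\epsilon)^2$ iterations. Each iteration costs $\mathcal O(|\action|TB)$, so the product is $|\action|^3TB/(\epsilon^2\sigma^2(1-\gamma)^2)$.
\end{itemize}

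The key comparison is that PS-BCD's PO bound is worse than SGD's by exactly the factor $T^2$. This arises because $\sum_t\nu_t$ carries an extra $\kappa_1=T$ in \eqref{eq:nu-def} relative to $L$. The iteration count then scales as $(\sum_t\nu_t)^2\sim T^2L^2$, and parallelization saves nothing because $|\{j:t\in C_j\}|$ is already $1$.

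\textbf{Main obstacle.} The delicate step is the global constant $L$ for PO, specifically showing that it carries $|\action|/(1-\gamma)$ but no factor of $T$. The discount weights $\gamma^{t+1}$ in $c_{t+1}$ make $\|a_\sx(u)\|^2$ summable, so the bound does not grow with $T$. The $|\action|$ factor has to be traced through the same closed-form LSE curvature bound from \S\ref{sec:EC-log-sum-exp-smoothing-curvature} that produced it in Corollary~\ref{cor:iterComplexPSBCD}. To do this, I would reuse those curvature expressions with a single block rather than re-derive them, and check that the single-block specialization of the corollary's $\nu$ bound equals the PS-BCD bound divided by $\kappa_1=T$. I would also need to verify that $\chi^2$ stays $T$-independent under the unweighted norm, appealing to Corollary~\ref{cor:dual-noise-specializations}.
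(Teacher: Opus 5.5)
Your proposal is correct and follows essentially the same route as the paper: each entry is the iteration complexity times the per-iteration cost. The iteration complexity is bounded through $\sum_i\nu_i$ for $\|\beta^*\|_\nu^2$ and Corollary~\ref{cor:dual-noise-specializations} for $\chi^2$, SGD is formally the $n=\tau=1$ case of Theorem~\ref{thm:psbcd-general}, and the PO curvature is bounded via $\|a_\sx(u)\|_2^2\le 4\gamma^2/(1-\gamma^2)$ over $|\action|$ rows. When you ``absorb'' $\chi^2$, state the regime the paper uses to drop it: $\sigma\in(0,1)$ for WTCA, so that $1/(\sigma(1-\gamma))$ dominates $\sigma/(1-\gamma)$, and $|\action|\gg\sigma^2(1-\gamma)$ for PO.
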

PS-BCD exhibits horizon-length independent computational complexity when used to solve WTCA. Using SGD instead to solve WTCA introduces a linear dependence on $T$. It is tempting to think that this difference is purely due to parallelization in PS-BCD removing the per-iteration cost dependence on $T$. However, there is a more subtle effect on the iteration complexity. Because WTCA is weakly coupled, the information loss from using block-$t$ updates in parallel is not significant compared to the full gradient update used in SGD, and thus the iteration complexity also remains independent of $T$. The PO model exhibits a markedly different dependence. First, the computational complexity of applying PS-BCD to solve PO shows nonlinear growth with $T$ due to strong temporal coupling. Second, SGD achieves lower computational complexity than PS-BCD when solving PO. This somewhat striking behavior happens because PS-BCD parallelizes block updates of the PO objective function, which is fully coupled over time. Employing block updates leads to significant information loss compared to a full gradient update, and this causes the PS-BCD iteration complexity to depend more steeply on $T$ than that of SGD.

\subsection{Model Selection Under Computational Time Budgets}
\label{sec:model-selection}

In \S\ref{subsec:BoundOrderingAndModSelection}, we argued that model selection should account for computational time budgets because the number of exogenous samples and basis functions that can be processed within a fixed budget depends on solution complexity. That discussion was necessarily conditional on the existence of an algorithm that exploits weak temporal coupling. With Proposition~\ref{prop:complexity-comparison} in hand, we can now make these arguments concrete. We focus on WTCA and PO and compare the best algorithm for each: PS-BCD for WTCA and SGD for PO, whose computational complexities are
\[
\mathrm{WTCA~(PS\text{-}BCD)}:
\quad
\tilde{\mathcal O}\!\left(
\frac{B}{\epsilon^2\sigma^2(1-\gamma)^2}
\right),
\qquad
\mathrm{PO~(SGD)}:
\quad
\tilde{\mathcal O}\!\left(
\frac{|\action|^3\,T\,B}{\epsilon^2\sigma^2(1-\gamma)^2}
\right).
\]

\paragraph{Exogenous samples.}
PS-BCD employs stochastic approximation, where each iteration processes one realization of the exogenous trajectory $\sx$. Executing $K$ iterations therefore incorporates $K$ independent sample paths. The per-sample cost for WTCA is independent of $T$, whereas for PO it grows with $T$ both directly through the explicit factor of $T$ and indirectly through $|\action| = |\action_0| \times \cdots \times |\action_{T-1}|$, which itself grows with the horizon. Therefore, under a fixed computational time budget, the number of sample paths that WTCA can process relative to PO increases with $T$.

\paragraph{Basis functions.}
As established in Proposition~\ref{prop:asymptotic-consistency-random}, increasing the number of random basis functions $B$ improves approximation accuracy at the rate $1/\sqrt{B}$, and WTCA and PO share this rate. Let $\mathcal C$ denote the available time budget and fix a target optimization accuracy $\epsilon$. 
Equating each approximation's computational complexity with the budget $\mathcal C$
determines the largest admissible basis dimension under each formulation:
\[
B_{\mathrm{WTCA}}
\asymp
\mathcal C\,\epsilon^2\sigma^2(1-\gamma)^2,
\qquad
B_{\mathrm{PO}}
\asymp
\frac{\mathcal C\,\epsilon^2\sigma^2(1-\gamma)^2}
{|\action|^3\,T}.
\]
Thus, under identical computational resources, WTCA can support basis families larger by a factor proportional to $|\action|^3 T$.
Substituting these admissible basis dimensions into the $\mathcal O(1/\sqrt{B})$ approximation rate of Proposition~\ref{prop:asymptotic-consistency-random} yields
\[
\|U_{\mathrm{WTCA},B}-V^*\|_\infty
=
\tilde{\mathcal O}\!\left(\frac{1}{\sqrt{\mathcal C}}\right),
\qquad
\|U_{\mathrm{PO},B}-V^*\|_\infty
=
\tilde{\mathcal O}\!\left(
\frac{\sqrt{|\action|^3 T}}{\sqrt{\mathcal C}}
\right).
\]

In summary, although PO yields the tightest upper bound for any fixed set of basis functions and exact expectations, its full temporal coupling increases computational cost and restricts both the number of exogenous samples and the number of basis functions that can be processed. In contrast, WTCA's weak temporal coupling enables PS-BCD to achieve horizon-independent complexity, allowing more sample paths and larger basis function sets within the same budget. The resulting approximation rate for WTCA is $\tilde{\mathcal O}(1/\sqrt{\mathcal C})$, free of horizon dependence, whereas the rate for PO degrades as $\tilde{\mathcal O}(\sqrt{|\action|^3 T}/\sqrt{\mathcal C})$. This advantage grows with the horizon length. That is, although PO provides a tighter bound than WTCA for any fixed set of basis functions and exact expectations, under a fixed computational time budget WTCA can produce tighter bounds than PO, and this reversal widens as the horizon grows, as anticipated in \S\ref{subsec:BoundOrderingAndModSelection}.\looseness=-1

\section{Numerical Experiments}
\label{sec:numerical}
We evaluate the budget--accuracy tradeoff predicted by our theoretical analysis in two settings, Bermudan option pricing~\citep{desai2012pathwise} and merchant ethanol production~\citep{guthrie2009real,yang2024least,yang2025improved}. Both settings admit MDP formulations with a high-dimensional exogenous state and involve irreversible endogenous transitions, such as option exercise and facility abandonment, but differ in two respects that affect computational tradeoffs. 
The Bermudan option has a binary action space (stop or continue) and a fixed-dimensional exogenous state, whereas ethanol production has up to four feasible actions per state and an exogenous state whose dimension grows with the horizon. 
In these problems, myopic rolling reoptimization based on deterministic forecasts performs poorly because it neglects the option value of preserving future flexibility.
Indeed, in our Bermudan option instances, the well known intrinsic policy stops whenever the immediate payoff is positive, and generates lower bounds that are negligible relative to the known upper bounds, with optimality gaps exceeding 90\%. In our ethanol instances, the (rolling) intrinsic policy performs even worse. It abandons the facility at the initial stage on every Monte Carlo sample path and generates zero policy value, because the initial forward curve has negative intrinsic value. We present Bermudan option pricing results in \S\ref{scc:Insts-BermudanOptions}. Due to space limitations, the ethanol production treatment in \S\ref{scc:ethanol-main} is more condensed, with details provided in \S\ref{sec:EC-numerical}.

We compare WTCA, PO, and LSM implemented in \texttt{C++}, with least-squares regressions solved via Gurobi~12.0.  Experiments were conducted on a workstation with a 24-core Intel(R) Core(TM) i9-14900K CPU (3.20~GHz) and 128~GB of RAM.

\subsection{Bermudan Option Pricing}
\label{scc:Insts-BermudanOptions}
We consider the valuation of a multi-asset Bermudan call option with a knock-out barrier, formulated as a finite-horizon optimal stopping problem following \citet{desai2012pathwise}. The option is written on $N$ underlying assets with exercise dates indexed by $\tset:=\{0,1,\ldots,T-1\}$. At time~$t\in\tset$, the exogenous state is the vector of asset prices $\sx_t=(\sx_{t,1},\ldots,\sx_{t,N})\in\mathbb{R}_+^N$. Under the risk-neutral measure, each asset price follows a geometric Brownian motion. Specifically, for each $q=1,\ldots,N$, the price process $\{W_{q}(t'), t'\in\mathbb{R}_{+}\}$ satisfies
\[
\frac{dW_{q}(t')}{W_{q}(t')} = r_f dt' + \eta_q dZ_{q}(t'),
\]
where $r_f$ is the risk-free rate, $\eta_q$ is the volatility of asset~$q$, and $\{Z_{q}(t')\}_{q=1}^N$ are independent standard Brownian motions \citep{desai2012pathwise}. Then $\{\sx_{t,q},t\in\tset\}$ is obtained by sampling $W_q(t')$ at each possible exercise date $t$.
The per-period discount factor is $\gamma=e^{-r_f\Delta t}$, with $\Delta t$ denoting the fixed time step between consecutive exercise dates.

The endogenous state $\se_t\in\{0,1\}$ indicates whether the option is active ($\se_t=0$) or inactive ($\se_t=1$) due to exercise or knock-out. The feasible action set depends only on $\se_t$:
\[
\mathcal{A}_t(\se_t)=
\begin{cases}
\{\text{Stop},\,\text{Continue}\}, & \text{if } \se_t=0,\\[2pt]
\varnothing, & \text{if } \se_t=1.
\end{cases}
\]
The transition of $\se_t$ is driven by the chosen action and the barrier condition. Let $w^B$ denote the barrier price. If the maximum of the underlying prices ever hits or exceeds $w^B$, or if the holder exercises, the option becomes inactive thereafter:
\[
\se_t=
\begin{cases}
\mathbbm{1}_{\{\max_q\sx_{t,q}\ge w^B\}}, & t=0,\\[3pt]
\max\Big\{\se_{t-1},\,\mathbbm{1}_{\{\max_q\sx_{t,q}\ge w^B\}},\,\mathbbm{1}_{\{a_{t-1}=\mathrm{Stop}\}}\Big\}, & t\in\tset\setminus\{0\},
\end{cases}
\]
where $\mathbbm{1}$ is the indicator function. Let $w^S$ be the strike price. The immediate payoff is
\begin{equation}\label{eq:payoff_Bermudan}
r_t((\se_t,\sx_t),a_t)=
\begin{cases}
(\max_q\sx_{t,q}-w^S)^+, & \text{if } \se_t=0 \text{ and } a_t=\mathrm{Stop},\\[3pt]
0, & \text{otherwise},
\end{cases}
\end{equation}
where $(\cdot)^+=\max\{\,\cdot,0\}$. Thus, the option yields a positive payoff only when it is active and exercised; otherwise, the payoff is zero.

\subsubsection{Implementation Details}
\label{scc:Details-BermudanOptions}
We evaluate three methods on two sets of instances that differ in the time horizon. 
The first set uses $T=36$ exercise dates, which corresponds to a 36-month horizon, and the second set uses $T=100$ exercise dates, which corresponds to a 100-month horizon. Following \citet{desai2012pathwise}, we vary the initial price $w^{I}\in\{90,100,110\}$ and the number of underlying assets $N\in\{4,8,16\}$, which yields nine instances for each horizon. With fixed $T$, $N$, and $w^{I}$, the initial asset prices are identical across assets, i.e., $\sx_{0,q}=w^I$ for $q=1,\ldots, N$. The remaining parameters follow \citet{desai2012pathwise} and are reported in Table~\ref{tab:parameter_settings}.
\begin{table}[ht!]
\centering
\caption{Parameter settings for Bermudan option instances}
\label{tab:parameter_settings}
\renewcommand{\arraystretch}{1.15}
\begin{tabular}{lc}
\hline
Parameter & Value \\
\hline
Strike price $w^S$ & \$100 \\
Barrier price $w^B$ & \$170 \\
Time step $\Delta t$ & $1/12$ (monthly) \\
Volatility $\eta_q$ (annualized) & 0.20 \\
Risk-free rate $r_f$ (annualized) & 0.05 \\
\hline
\end{tabular}
\end{table}
\paragraph{Basis functions.}
Our basis function set consists of a constant term, the payoff \eqref{eq:payoff_Bermudan}, and $B-2$ random Fourier basis functions.
Including the payoff as a basis term is standard in option pricing ~\citep{kohler2010review,chen2021deep,desai2012pathwise}.
Random Fourier bases are also widely used in machine learning and more recently in option pricing because they provide flexible high-dimensional approximations of smooth value functions without manual feature design
~\citep{rahimi2007random,rahimi2008,pakiman2024,TothOberhauserSzabo2025}. 

Each random Fourier basis function has the form 
$\phi(\sx_t;\theta)=\cos\big(\theta_0+\sum_{q=1}^N\theta_q\sx_{t,q}\big),$ 
where the coefficient vector $\theta=(\theta_0,\theta_1,\ldots,\theta_N)$ is generated randomly. The intercept $\theta_0$ is drawn from a uniform distribution $\text{Unif}([-\pi,\pi])$, and $\theta_1, \ldots,\theta_N$ are sampled independently from a normal distribution $N(0,\varrho)$, with $\varrho$ controlling the frequency bandwidth of the basis functions.
The value function approximation is
\begin{equation}\label{eq:VFA_Bermudan}
\hat{V}_t(\se_t,\sx_t)
=\beta_{t,0}
+\beta_{t,1}(\max_q\sx_{t,q}-w^S)^+(1-\se_t)
+\sum_{b=2}^{B-1}\beta_{t,b}\phi_b(\sx_t;\theta^b),
\end{equation}
where $\theta^b:=(\theta_0^b,\theta_1^b,\ldots,\theta_N^b)$ is the $b$-th set of random draws, and $(\beta_{t,0}, \beta_{t,1},\ldots, \beta_{t,B-1})$ are the weights associated with the basis functions. 
We set $B=17$, which corresponds to 15 random Fourier basis functions plus the constant and payoff terms. This is the smallest number of bases that yield stable numerical results. Increasing this number further does not significantly improve the results in our experiments.

\paragraph{Training and parameter tuning.}
Both WTCA and PO use a log-sum-exp smoothing of the inner maximization to obtain a differentiable objective. We tune the bandwidth $\varrho$ over $\{10^5,10^4,\ldots,10^{-5}\}$ and the smoothing parameter $\sigma$ over $\{1,10,20,30,\ldots,5000\}$. Larger values of $\sigma$ typically speed up optimization but introduce more bias relative to the original nonsmooth formulation, while smaller values reduce this bias at higher computational cost. For numerical stability and more reliable step-size selection, we scale the payoff basis term by a factor $\lambda\in\{1,10,20,\ldots,w^B\}$ so that its magnitude is comparable to that of the random Fourier features. We select all hyperparameters by cross-validation to balance speed and accuracy.\looseness =-1

We solve the smoothed WTCA and PO problems using Algorithm~\ref{alg:SBCD} and SGD, respectively, with identical basis functions and hyperparameter grids. After SGD converges, PO requires an additional regression step to recompute value function approximations that yield unbiased upper bounds \citep[\S 3.3]{desai2012pathwise}; no analogous post-processing step is needed for WTCA. We allocate equal CPU-time limits to WTCA and PO to ensure a fair comparison. These limits were two hours for $T = 36$ and four hours for $T = 100$. Within these budgets, WTCA has already achieved stable performance, whereas PO remains far away from convergence and exhibits a slow rate of improvement.

\paragraph{Benchmarks and evaluation.}
As a benchmark, we include the LSM method
~\citep{carriere1996valuation,longstaff2001valuing,tsitsiklis2001regression,glasserman2004monte}, which estimates continuation values through backward regressions.
Each regression uses 50,000 simulated trajectories and the same basis set as WTCA and PO.

We estimate lower bounds by simulating the greedy policy in \eqref{eq:greedy-policy} on 100,000 Monte Carlo paths. We estimate upper bounds using the information-relaxation and duality approach \citep{brown2010information} on the same sample paths. For each state-action pair, we compute the dual penalty in \eqref{eq:info-relaxpenalty} and estimate the required conditional expectations using one-step inner sampling with 500 draws. \looseness =-1

\subsubsection{Results}
\label{scc:Results-BermudalOptions}
Table~\ref{T:Results1} reports the performance of LSM, PO, and the proposed WTCA on nine Bermudan option instances with horizon $T=36$. Each instance is characterized by the number of assets $N$ and the initial asset price $w^I$.
The top panel reports upper and lower bound estimates, denoted UB and LB, with standard errors in parentheses.  
The bottom panel reports, for each method, the percentage deviation of its UB from the tightest observed bound, labeled ``Best UB'', and the relative optimality gap computed as $(\text{Best UB} - \text{LB})/(\text{Best UB})$. This gap measures the normalized distance between the best available upper bound and the method's policy value. Smaller values thus indicate better near-optimal performance.
\begingroup
\renewcommand{\arraystretch}{1.0}
\begin{table}[ht!]
\caption{Upper and lower bound estimates (with standard errors) and relative percentage differences for LSM, PO, and WTCA in instances with $\mathbf{T=36}$}
\label{T:Results1}
\centering
\begin{tabular}{cccccccc}
\hline\hline
    &         & \multicolumn{3}{c}{Upper Bound Estimate}         & \multicolumn{3}{c}{Lower Bound Estimate}         \\ \cline{3-8} 
$N$ & $w^I$ & LSM            & PO             & WTCA           & LSM            & PO             & WTCA           \\ \hline
4   & 90      & 34.09 (0.06)   & 39.94 (0.06)   & 33.94 (0.04)   & 32.54 (0.08)   & 31.26 (0.08)   & 33.13 (0.08)   \\
4   & 100     & 42.87 (0.07)   & 47.97 (0.05)   & 42.45 (0.04)   & 40.86 (0.06)   & 40.87 (0.08)   & 41.78 (0.07)   \\
4   & 110     & 49.39 (0.07)   & 53.37 (0.03)   & 48.71 (0.05)   & 46.34 (0.05)   & 47.63 (0.07)   & 47.89 (0.06)   \\
8   & 90      & 45.04 (0.08)   & 49.20 (0.04)   & 44.59 (0.04)   & 42.83 (0.05)   & 42.97 (0.07)   & 43.71 (0.06)   \\
8   & 100     & 51.36 (0.08)   & 54.47 (0.03)   & 50.50 (0.05)   & 47.91 (0.04)   & 49.50 (0.06)   & 49.59 (0.05)   \\
8   & 110     & 54.60 (0.08)   & 57.05 (0.02)   & 53.61 (0.04)   & 50.88 (0.03)   & 52.77 (0.05)   & 52.71 (0.05)   \\
16  & 90      & 51.74 (0.09)   & 54.87 (0.02)   & 50.91 (0.05)   & 48.37 (0.03)   & 50.03 (0.05)   & 50.09 (0.05)   \\
16  & 100     & 54.52 (0.08)   & 57.22 (0.02)   & 53.59 (0.04)   & 50.99 (0.03)   & 52.87 (0.05)   & 52.79 (0.05)   \\
16  & 110     & 55.58 (0.07)   & 58.06 (0.02)   & 54.83 (0.04)   & 52.88 (0.03)   & 54.17 (0.05)   & 54.06 (0.05)   \\ \hline
    &         & \multicolumn{3}{c}{\% (UB - Best UB)/(Best UB)} & \multicolumn{3}{c}{\% (Best UB - LB)/(Best UB)} \\ \cline{3-8} 
$N$ & $w^I$ & LSM            & PO             & WTCA           & LSM            & PO             & WTCA           \\ \hline
4   & 90      & 0.44           & 17.69          & 0.00           & 4.13           & 7.88           & 2.39           \\
4   & 100     & 0.98           & 13.00          & 0.00           & 3.75           & 3.73           & 1.59           \\
4   & 110     & 1.40           & 9.58           & 0.00           & 4.87           & 2.22           & 1.68           \\
8   & 90      & 1.02           & 10.34          & 0.00           & 3.95           & 3.63           & 1.98           \\
8   & 100     & 1.70           & 7.87           & 0.00           & 5.14           & 1.99           & 1.80           \\
8   & 110     & 1.84           & 6.40           & 0.00           & 5.11           & 1.56           & 1.68           \\
16  & 90      & 1.63           & 7.78           & 0.00           & 5.00           & 1.73           & 1.61           \\
16  & 100     & 1.75           & 6.78           & 0.00           & 4.84           & 1.33           & 1.50           \\
16  & 110     & 1.37           & 5.90           & 0.00           & 3.55           & 1.21           & 1.40           \\ \hline\hline
\end{tabular}
\end{table}
\endgroup

WTCA consistently generates the tightest upper bounds across all instances.
Relative to WTCA, LSM produces upper bounds that are 0.4\%--1.8\% higher, averaging a 1.3\% difference. PO performs markedly worse, generating upper bounds that are 5.9\% to 17.7\% looser than those of WTCA, with an average shortfall of 9.5\%.
While these results may seem at odds with Proposition~\ref{prop:WTCA-comparison}, which shows that PO yields tighter bounds for any fixed basis set, the discrepancy underscores the practical benefits of model selection discussed in \S\ref{sec:stoch-bcd}.
Under equal CPU-time budget, WTCA benefits from a horizon-independent iteration cost that is substantially lower than that of PO. This efficiency enables WTCA to process more exogenous samples, which ultimately outweighs PO's theoretical upper bounding advantage.\looseness = -1

Regarding lower bounds, WTCA and PO yield comparable policies in most instances. At $N=8$ and $N=16$, the optimality gaps of the two methods differ by at most 0.5\% in five of six instances, with both falling between 1.2\%--2.0\%. The main exception occurs at $(N,w^I)=(4,90)$, where PO's gap reaches 7.9\%, while WTCA's remains tight at 2.4\%. LSM yields the largest gaps in eight of the nine instances, with gaps ranging from 3.6\% to 5.1\%, which is consistent with the backward error accumulation discussed in \S\ref{sec:fh-mdp-optimization}.

Using the WTCA upper bounds as yardsticks, the lower bound results further indicate that the WTCA policy is near-optimal across all instances. PO achieves similarly near-optimal performance in six of the nine instances, with larger gaps only at $(N,w^I)=(4,90)$, $(N,w^I)=(4,100)$, and $(N,w^I)=(8,90)$.
These conclusions are not apparent when optimality gaps are computed using upper bounds from PO or LSM.

WTCA achieves more significant improvements in upper bounds than in lower bounds because upper bounds depend directly on the absolute accuracy of the value function approximation. In contrast, the induced greedy policy relies primarily on the relative ranking of actions at each state, making it less sensitive to approximation errors.
\begin{figure}[ht!]
    \centering
    \includegraphics[width=0.45\linewidth]{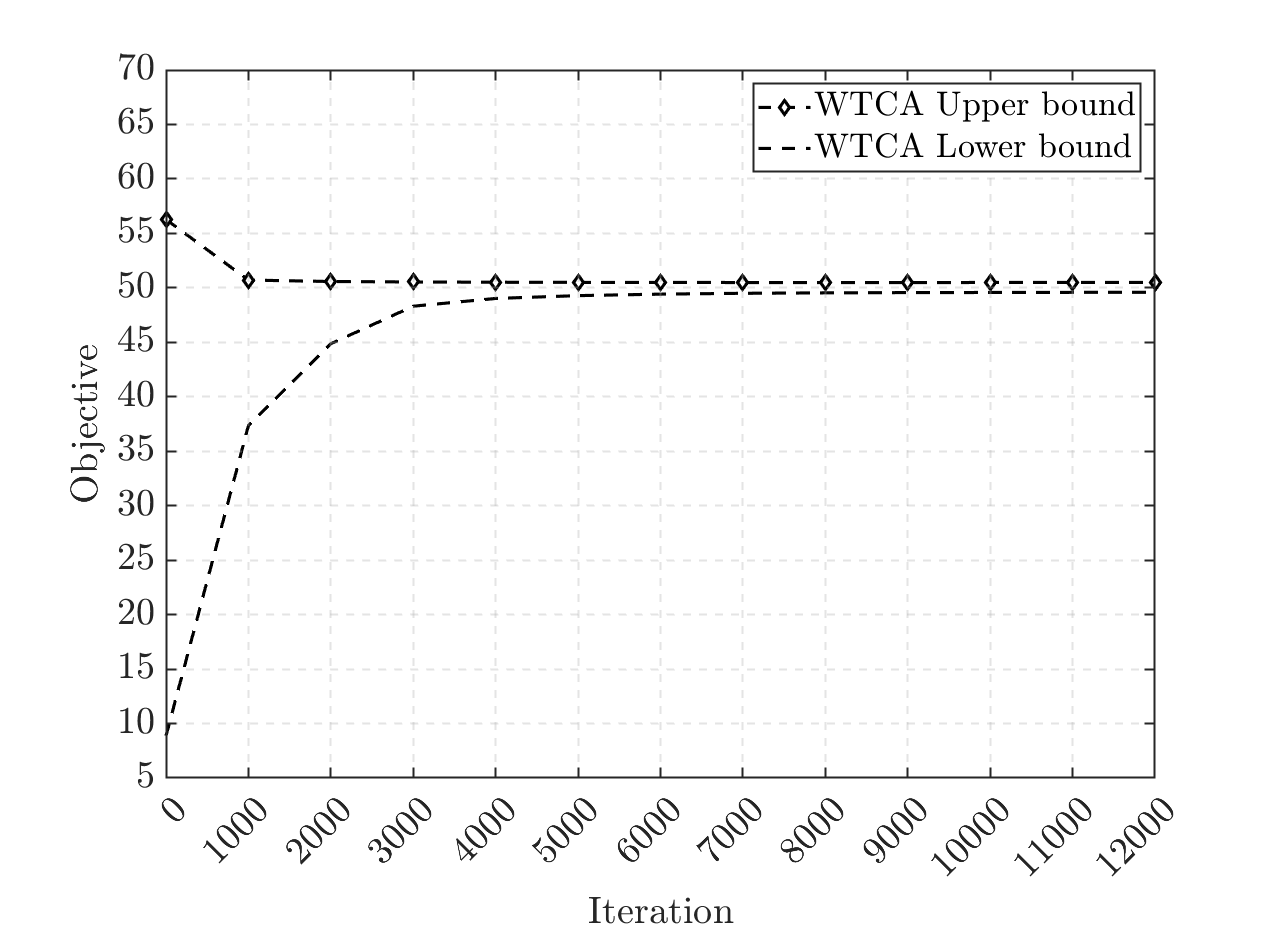}
    \includegraphics[width=0.45\linewidth]{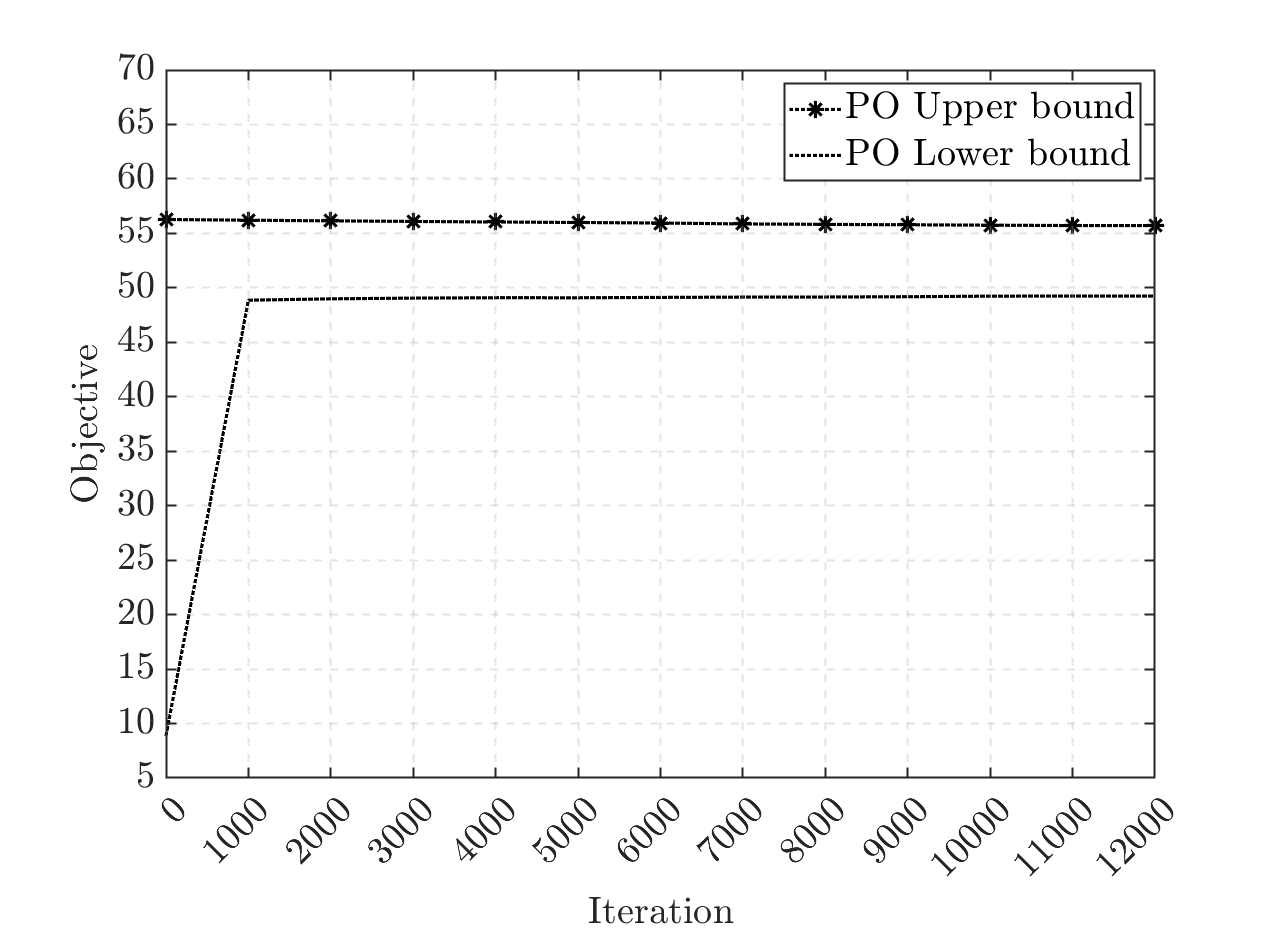}
    \caption{Convergence of upper and lower bounds for WTCA (left) and PO (right) in the instance with $\mathbf{T=36}$, $\mathbf{N=8}$, and $\mathbf{w^I=100}$.}
    \label{fig:Comparison}
\end{figure}

Figure~\ref{fig:Comparison} illustrates the convergence behavior predicted by Theorem~\ref{thm:psbcd-convergencerate}.
In a representative instance with $(T,N,w^I)=(36,8, 100)$, the WTCA upper bound falls within 0.5\% of its final value by 12{,}000 iterations. At the same iteration count, PO remains roughly 7\% above its final estimate.
This disparity is consistent with the theoretical rates of $\mathcal{O}(1/\sqrt{K})$ for WTCA and $\mathcal{O}(\sqrt{T}/\sqrt{K})$ for PO.
Both lower bounds increase monotonically. WTCA stabilizes after approximately 5,000 iterations, whereas PO rises sharply during the first 1,000 iterations before gradually approaching its final asymptotic value.
\begingroup 
\renewcommand{\arraystretch}{1.0}
\begin{table}[b!]
\caption{Upper and lower bound estimates (with standard errors) and relative percentage differences for LSM, PO, and WTCA in 100-stage $\mathbf{(T=100)}$ instances}
\label{T:Results2}
\centering
\begin{tabular}{cccccccc}
\hline\hline
    &         & \multicolumn{3}{c}{Upper Bound Estimate}         & \multicolumn{3}{c}{Lower Bound Estimate}          \\ \cline{3-8} 
$N$ & $w^I$   & LSM            & PO             & WTCA           & LSM             & PO             & WTCA           \\ \hline
4   & 90      & 47.24 (0.11)   & 51.30 (0.04)   & 46.13 (0.07)   & 41.27 (0.05)    & 41.94 (0.08)   & 44.80 (0.06)   \\
4   & 100     & 51.08 (0.11)   & 54.45 (0.03)   & 49.81 (0.07)   & 45.73 (0.04)    & 47.33 (0.07)   & 48.26 (0.05)   \\
4   & 110     & 53.92 (0.10)   & 56.87 (0.02)   & 52.57 (0.07)   & 48.35 (0.03)    & 50.74 (0.06)   & 50.85 (0.04)   \\
8   & 90      & 52.20 (0.11)   & 55.22 (0.02)   & 50.77 (0.07)   & 46.88 (0.03)    & 49.34 (0.06)   & 49.53 (0.04)   \\
8   & 100     & 54.21 (0.10)   & 57.28 (0.02)   & 52.83 (0.06)   & 49.04 (0.03)    & 51.47 (0.06)   & 51.35 (0.04)   \\
8   & 110     & 55.48 (0.09)   & 58.87 (0.02)   & 54.31 (0.05)   & 51.04 (0.03)    & 53.22 (0.05)   & 53.00 (0.04)   \\
16  & 90      & 53.75 (0.10)   & 57.18 (0.02)   & 52.50 (0.06)   & 49.09 (0.03)    & 51.44 (0.05)   & 51.28 (0.04)   \\
16  & 100     & 54.92 (0.09)   & 58.74 (0.02)   & 53.87 (0.05)   & 51.08 (0.03)    & 52.97 (0.05)   & 52.84 (0.05)   \\
16  & 110     & 55.60 (0.07)   & 59.81 (0.02)   & 54.40 (0.05)   & 52.87 (0.03)    & 54.23 (0.05)   & 53.39 (0.05)   \\ \hline
    &         & \multicolumn{3}{c}{\% (UB - Best UB)/ (Best UB)} & \multicolumn{3}{c}{\% (Best UB - LB) / (Best UB)} \\ \cline{3-8} 
$N$ & $w^I$ & LSM            & PO             & WTCA           & LSM             & PO             & WTCA           \\ \hline
4   & 90      & 2.40           & 11.20          & 0.00           & 10.54           & 9.09           & 2.87         \\
4   & 100     & 2.53           & 9.30           & 0.00           & 8.20            & 4.99           & 3.12         \\
4   & 110     & 2.56           & 8.17           & 0.00           & 8.02            & 3.49           & 3.28         \\
8   & 90      & 2.80           & 8.75           & 0.00           & 7.67            & 2.83           & 2.45         \\
8   & 100     & 2.60           & 8.42           & 0.00           & 7.19            & 2.58           & 2.81         \\
8   & 110     & 2.15           & 8.41           & 0.00           & 6.01            & 2.01           & 2.41         \\
16  & 90      & 2.38           & 8.91           & 0.00           & 6.51            & 2.03           & 2.33         \\
16  & 100     & 1.95           & 9.04           & 0.00           & 5.18            & 1.67           & 1.92         \\
16  & 110     & 2.20           & 9.95           & 0.00           & 2.80            & 0.30           & 1.84         \\ \hline\hline
\end{tabular}
\end{table}
\endgroup

\paragraph{Scaling to $T=100$.}
Table~\ref{T:Results2} presents results for long-horizon instances with one hundred monthly stages ($T=100$). Consistent with the first theoretical prediction, WTCA again produces the tightest upper bound across all instances. PO's upper bound is, on average, 9.1\% looser than WTCA's, closely matching its 9.5\% average deviation at $T=36$.
The LSM upper bound exceeds the WTCA upper bound by 1.9\%--2.8\%, nearly doubling the deviation observed in the 36-stage instances.
PO's persistently large deviation highlights the limitations of its full temporal coupling. As $T$ grows, PO's per-iteration workload increases sharply, limiting the number of iterations it can perform within a fixed budget to achieve its theoretically tight bound. 
In contrast, WTCA benefits from a horizon-independent per-iteration cost, and continues to generate the strongest bounds.

For lower bounds, WTCA and PO yield similar policies in most instances, with optimality gaps differing by less than 0.5\% in six of nine cases. The two methods diverge primarily at $N=4$, where WTCA's gaps of 2.9\%--3.3\% are substantially tighter than the 3.5\%--9.1\% gaps produced by PO. LSM's gaps range from 2.8\% to 10.5\%.

\paragraph{Cross-horizon comparison.}
WTCA delivers the tightest upper bounds at both $T=36$ and $T=100$, and its average optimality gap increases only modestly from 1.7\% to 2.6\%.
Near-optimal policies are achieved with only 15 random Fourier basis functions, far fewer than sample average approximation approaches, which typically generate much larger optimality gaps with the same number of random Fourier features. For example, in our experiments, solving PO via SAA with 15 random Fourier bases in this context yields an optimality gap of about 50\%.
PO is relatively stable at large values of $N$; specifically, for $N=16$, the average gap is 1.4\% at both $T=36$ and $T=100$. However, PO's performance worsens at $N=4$, where the average gap increases from 4.6\% to 5.9\% because the long horizon amplifies its convergence disadvantage.
LSM deteriorates substantially. Its average gap rises from 4.5\% to 6.9\%, which aligns with backward error accumulation over long horizons. 

Overall, these results indicate that WTCA scales well in both horizon length and dimensionality while maintaining the tightest bounds and near-optimal policies at problem sizes beyond those previously reported.

\subsection{Merchant Ethanol Production}
\label{scc:ethanol-main}
We next evaluate these three approaches on merchant ethanol production~\citep{guthrie2009real,yang2024least,yang2025improved}, which provides a complementary test with richer structure than the Bermudan option setting. An ethanol production plant converts corn and natural gas into ethanol. The operator maximizes expected total profit over a finite planning horizon by dynamically switching the facility among three operating modes: operational ($\mathsf{O}$), mothballed ($\mathsf{M}$), or abandoned ($\mathsf{A}$). When operational, the plant can produce at full capacity, suspend production temporarily, transit to the mothballed state, or permanently shut down, yielding up to four feasible actions per state. This richer action space contrasts with the binary stop/continue decision in Bermudan options. The exogenous state comprises the spot and futures prices of corn, natural gas, and ethanol, represented as three forward curves generated by a multi-factor term-structure model. As a result, its dimension increases with the horizon $T$, unlike the fixed-dimensional asset prices in the Bermudan setting. The full problem formulation, price dynamics, operational parameters, and basis function architecture are provided in \S\ref{sec:EC-numerical}.\looseness=-1

We evaluate WTCA, PO, and LSM on 12 instances corresponding to the months of 2011, each using an initial forward curve from the first trading day of that month. The first set uses a 24-month horizon ($T=24$) and the second extends to 36 months ($T=36$). Following the same protocol as in the Bermudan experiments, all three methods use identical basis functions, and WTCA and PO are compared under equal computational time budgets within each instance. 
The time limits are five hours for the 24-month instances and nine hours for the 36-month instances. Under these limits, WTCA has already stabilized, whereas PO remains far from convergence.
Each instance uses 15 random Fourier basis functions plus commodity-specific terms; full details are given in \S\ref{sec:EC-numerical}. Table~\ref{T:Results3-main} reports results for the $T=24$ instances. 
\begingroup
\renewcommand{\arraystretch}{1.0}
\begin{table}[h!]
\centering
\caption{Upper and lower bound estimates (with standard errors) and relative percentage differences for LSM, PO, and WTCA in 24-stage $\mathbf{(T=24)}$ ethanol production instances}
\begin{tabular}{ccccccc}
\hline\hline
         & \multicolumn{3}{c}{Upper Bound Estimate}         & \multicolumn{3}{c}{Lower Bound Estimate}          \\ \cline{2-7} 
Instance & LSM              & PO               & WTCA            & LSM              & PO               & WTCA             \\ \hline
Jan      & 21.61 (0.05)     & 22.11 (0.05)     & 21.21 (0.05)    & 19.25 (0.07)     & 19.02 (0.07)     & 19.52 (0.07)     \\
Feb      & 21.92 (0.05)     & 21.51 (0.05)     & 21.11 (0.05)    & 19.08 (0.07)     & 19.00 (0.07)     & 19.38 (0.07)     \\
Mar      & 26.05 (0.05)     & 26.61 (0.05)     & 24.99 (0.05)    & 23.44 (0.07)     & 23.09 (0.07)     & 23.49 (0.07)     \\
Apr      & 27.57 (0.05)     & 27.53 (0.05)     & 26.83 (0.05)    & 24.92 (0.07)     & 24.91 (0.07)     & 25.42 (0.07)     \\
May      & 23.64 (0.05)     & 24.29 (0.05)     & 23.09 (0.05)    & 20.95 (0.07)     & 20.87 (0.07)     & 21.07 (0.07)     \\
Jun      & 20.27 (0.05)     & 20.59 (0.05)     & 19.59 (0.05)    & 17.52 (0.07)     & 17.89 (0.07)     & 18.01 (0.07)     \\
Jul      & 17.35 (0.05)     & 17.63 (0.05)     & 16.63 (0.05)    & 14.74 (0.07)     & 15.01 (0.07)     & 15.14 (0.07)     \\
Aug      & 24.49 (0.05)     & 24.44 (0.05)     & 23.74 (0.05)    & 20.76 (0.07)     & 21.36 (0.07)     & 21.76 (0.07)     \\
Sep      & 24.65 (0.05)     & 24.18 (0.05)     & 23.38 (0.05)    & 21.72 (0.07)     & 21.23 (0.07)     & 21.83 (0.07)     \\
Oct      & 22.06 (0.05)     & 21.61 (0.05)     & 21.30 (0.05)    & 19.25 (0.07)     & 19.26 (0.07)     & 19.56 (0.07)     \\
Nov      & 20.50 (0.05)     & 20.31 (0.05)     & 20.00 (0.05)    & 18.04 (0.07)     & 18.13 (0.07)     & 18.43 (0.07)     \\
Dec      & 16.18 (0.05)     & 15.35 (0.05)     & 15.22 (0.05)    & 13.84 (0.07)     & 13.99 (0.07)     & 14.34 (0.07)     \\ \hline
         & \multicolumn{3}{c}{\%(UB - Best UB)/ (Best UB)} & \multicolumn{3}{c}{\%(Best UB - LB) / (Best UB)} \\ \cline{2-7} 
Instance & LSM              & PO               & WTCA            & LSM              & PO               & WTCA             \\ \hline
Jan      & 4.24             & 1.89             & 0.00            & 9.24             & 10.33            & 7.97             \\ 
Feb      & 3.84             & 1.89             & 0.00            & 9.62             & 10.00            & 8.20             \\ 
Mar      & 6.48             & 4.24             & 0.00            & 6.20             & 7.60             & 6.00             \\ 
Apr      & 2.76             & 2.61             & 0.00            & 7.12             & 7.16             & 5.26             \\ 
May      & 2.38             & 5.20             & 0.00            & 9.27             & 9.61             & 8.75             \\ 
Jun      & 3.47             & 5.10             & 0.00            & 10.57            & 8.68             & 8.07             \\ 
Jul      & 7.94             & 6.01             & 0.00            & 11.37            & 9.74             & 8.96             \\ 
Aug      & 3.16             & 2.95             & 0.00            & 12.55            & 10.03            & 8.34             \\ 
Sep      & 5.43             & 3.42             & 0.00            & 7.10             & 9.20             & 6.63             \\ 
Oct      & 3.57             & 1.46             & 0.00            & 9.62             & 9.58             & 8.17             \\ 
Nov      & 2.50             & 1.55             & 0.00            & 9.80             & 9.35             & 7.85             \\ 
Dec      & 6.31             & 0.85             & 0.00            & 9.07             & 8.08             & 5.78             \\ \hline\hline
\end{tabular}
\label{T:Results3-main}
\end{table}
\endgroup

\paragraph{Upper bounds.} WTCA produces the tightest upper bound in all 12 instances. LSM's upper bounds are 2.4\% to 7.9\% higher than WTCA's (average 4.3\%), while PO's are 0.9\% to 6.0\% higher (average 3.1\%). The gap between PO and WTCA is smaller here than in Bermudan options because of the shorter horizon of $T=24$, which reduces PO's per-iteration cost disadvantage under equal budgets. Nevertheless, WTCA's weak coupling structure still enables it to process sufficient samples to overcome its weaker per-basis bound.\looseness=-1

\paragraph{Policy quality.} WTCA yields the strongest operating policy in all 12 instances. Its optimality gaps range from 5.3\% to 9.0\% (average 7.5\%), compared with 9.1\% for PO and 9.3\% for LSM. Unlike the Bermudan option experiments, where WTCA and PO deliver comparable policies at larger $N$, WTCA's policy value exceeds that of PO in every ethanol instance by an average of 1.6\%. This pattern is consistent with the richer action space in ethanol production: with up to four feasible actions per state, action selection depends more on the absolute accuracy of the value function approximation than in the binary stop/continue setting.

\paragraph{Scaling to $T=36$.} Extending the horizon from 24 to 36 months, WTCA continues to deliver the tightest upper bound in every instance. PO's average upper-bound deviation nearly triples from 3.1\% to 9.5\%, reflecting the growing per-iteration cost of SGD for PO as $T$ increases, which limits the number of iterations that can be performed within a fixed budget, while LSM's rises from 4.3\% to 5.7\%. Policy performance is similar across methods at $T=36$, with average optimality gaps of 6.2\% for WTCA, 6.5\% for PO, and 6.7\% for LSM. Full results for the 36-stage instances are reported in \S\ref{sec:EC-numerical}.\looseness=-1

\section{Conclusion}\label{sec:Conclusion}
This paper develops a weakly time-coupled approximation (WTCA) for finite-horizon Markov decision processes, built on the observation that the temporal coupling in optimization-based methods such as ALP and PO is not an intrinsic feature of the underlying MDP but an artifact of the approximation architecture. By enforcing Bellman constraints in expectation over the exogenous distribution rather than pointwise, the resulting formulation limits cross-stage dependence to neighboring periods even when the MDP itself is fully coupled across time. For any fixed basis family, the WTCA bound lies between PO and ALP in tightness and converges to the optimal value as the basis expands. A parallel stochastic block coordinate descent algorithm exploits this structure so that stages update concurrently, achieving computational complexity independent of horizon length for the first time in an optimization-based MDP approximation. Although PO is tighter for any fixed basis, WTCA's horizon-independent complexity allows more samples and basis functions to be processed within a given budget, creating a model selection reversal that widens with the horizon. Numerical experiments on Bermudan options and merchant ethanol production confirm these predictions. Under equal budgets, WTCA produces tighter bounds than PO in every instance tested, and near-optimal policies are obtained with a small number of random basis functions at problem sizes beyond those previously reported. These findings suggest that progress in optimization-based MDP methods can arise from rethinking how approximation architecture and computation interact, and that computational structure can be as important as approximation tightness in determining practical solution quality.

\bibliographystyle{informs2014} 
\bibliography{references}

%% Here starts the e-companion (EC)
%%%%%%%%%%%%%%%%%%%%%%%%%%%%%%%%%%%%%%%%%%%%%%%%%%%%%%%%%%
\ECSwitch

%\ECDisclaimer
%%%%%%%%%%%%%%%%%%%%%%%%%%%%%%%%%%%%%%%%%%%%%%%%%%%%%%%%%%

%%% Main head for the e-companion
\ECHead{Electronic Companion}

This Electronic Companion (EC) contains proofs of all theoretical results stated in the paper and the detailed complexity calculations
supporting the comparison between WTCA and PO.
\S\ref{sec:EC-notations} summarizes notation used throughout the EC. 
\S\ref{sec:EC-ALP-saddle} proves Proposition~\ref{prop:ALP-saddlepoint}, establishing the saddle-point representation of ALP. 
\S\ref{sec:EC-WTCA-ordering} proves Proposition~\ref{prop:WTCA-comparison}, which characterizes the ordering of the WTCA, PO, and ALP upper bounds. 
\S\ref{sec:EC-log-sum-exp-smoothing-curvature} derives properties of the log-sum-exp smoothing that yield explicit block-Lipschitz and curvature
bounds for the smoothed WTCA and PO objectives.
\S\ref{sec:EC-PSBCD} presents the convergence analysis of PS-BCD and proves Theorem~\ref{thm:psbcd-convergencerate}. Although the main paper
requires only the stagewise partition, we develop the analysis for arbitrary block partitions and $\tau$-nice sampling, as this general stochastic block coordinate descent framework may be of independent interest to the optimization community.
\S\ref{sec:EC-complexity} combines the convergence bound with the curvature and per-iteration cost estimates to prove Proposition~\ref{prop:complexity-comparison}.
\S\ref{sec:EC-asymptotics} establishes the asymptotic approximation properties under random basis functions and proves Proposition~\ref{prop:asymptotic-consistency-random}. 
Finally, \S\ref{sec:EC-numerical} provides the full formulation, implementation details, and additional results for the merchant ethanol production experiments summarized in \S\ref{scc:ethanol-main}.\looseness=-1

\section{Notation}\label{sec:EC-notations}

This section collects notation and conventions used throughout the Electronic Companion that are not introduced in the main paper. 

\begin{itemize} 
\item In this EC, the PS-BCD algorithm is presented in a general form in which the vector of basis function weights $\beta \in \R^{BT}$ is partitioned into $n$ blocks, denoted by $\beta = (\beta_{(1)}, \ldots, \beta_{(n)})$, with $\beta_{(i)}\in\R^{n_i}$ and $\sum_{i=1}^n n_i = BT$. The stagewise partition of the main paper corresponds to $n = T$ with $\beta_{(i)} = \beta_t$.

\item For each block $i$, we equip $\mathbb{R}^{n_i}$ with the standard
Euclidean inner product $\langle u,v\rangle = u^\top v$ and define the block norm and its dual as
\[
\|\beta_{(i)}\|_i = \|\beta_{(i)}\|_{i,*} :=\|\beta_{(i)}\|_2.
\]

\item Given curvature weights $\nu_i > 0$, define the weighted norm on
$\mathbb{R}^{BT}$ and its dual by
\[
\|\beta\|_\nu^2
:=
\sum_{i=1}^n \nu_i \|\beta_{(i)}\|_i^2,
\qquad
\|\beta\|_{\nu,*}^2
:=
\sum_{i=1}^n \nu_i^{-1} \|\beta_{(i)}\|_{i,*}^2.
\]
Since each block norm is Euclidean, $\|\cdot\|_\nu$ is a weighted
Euclidean norm on $\mathbb{R}^{BT}$.

\item 
For each block $i$, let $P_i : \mathbb{R}^{BT} \to \mathbb{R}^{n_i}$ denote the canonical projection onto block $i$, so that
\[
P_i d = d_{(i)}, \qquad \forall d \in \mathbb{R}^{BT}.
\]
We denote by  $P_i^\top : \mathbb{R}^{n_i} \to \mathbb{R}^{BT}$, the transpose of $P_i$, which transfers a block vector into the full-dimensional space.
That is, for $d \in \mathbb{R}^{n_i}$,
\begin{equation}\label{eq:lifted-perturbation}
d_{[i]} := P_i^\top d
\end{equation}
is the vector in $\mathbb{R}^{BT}$ whose $i$-th block equals $d$ and whose remaining blocks are zero. Accordingly, $\beta + d_{[i]}$ represents a perturbation of $\beta$ restricted to block $i$.

\item More generally, for a subset $S \subseteq \{1,\ldots,n\}$
and a vector $d = (d_{(1)},\ldots,d_{(n)}) \in \mathbb{R}^{BT}$,
define the block-restricted vector
\begin{equation}\label{eq:d[S]}
d_{[S]}
:=
\sum_{i \in S} P_i^\top d_{(i)}.
\end{equation}
Equivalently, $d_{[S]}$ is obtained from $d$ by retaining the blocks indexed by $S$ and setting all other blocks to zero.
\item A subset $S \subseteq \{1,\ldots,n\}$ follows $\tau$-nice sampling \citep{richtarik2016parallel, AccCD} if it is drawn uniformly at random from all subsets of $\{1,\ldots,n\}$ with expected cardinality $E[|S|] = \tau$.
\item Let $\bar{\mathcal S}$ denote the distribution over block selections induced by $\tau$-nice sampling. Expectation with respect to this sampling scheme is denoted by $\E_{\bar{\mathcal S}}[\cdot]$.
When both block sampling and exogenous randomness are present, we consider the product measure $\bar{\mathcal S}\times\mu$ and write $\E_{\bar{\mathcal S}\times\mu}[\cdot]$ for expectation with respect to the joint distribution. 
\item For each component $f_{j,\sigma}$ in the smoothed objective~\eqref{eq:general-smooth-component},
let $\mathcal{I}_j \subseteq \{1,\ldots,n\}$ denote the set of block indices
whose blocks $\beta_{(i)}$ appear in $f_{j,\sigma}$, and define the
\emph{component coupling degree} $\kappa_j := |\mathcal{I}_j|$.
When $n = T$ and $\beta_{(i)} = \beta_t$, $\mathcal{I}_j$ coincides with
$C_j$ and $\kappa_j$ coincide with the temporal coupling degree defined in the main paper.

\item We say $f_{j,\sigma}$ has \emph{block-Lipschitz gradients} with constant
$L_{ji} > 0$ for block $i$ if
\begin{equation}\label{eq:ESO:block-lipschitz}
\bigl\|\nabla_{\beta_{(i)}} f_{j,\sigma}(\beta + d_{[i]})
-
\nabla_{\beta_{(i)}} f_{j,\sigma}(\beta)\bigr\|_{i,*}
\;\le\;
L_{ji}\,\|d\|_i,
\qquad
\forall\,\beta \in \mathbb{R}^{BT},\; d \in \mathbb{R}^{n_i}.
\end{equation}

\item For the smoothed objective $F_\sigma$ in~\eqref{eq:general-formulation-smooth},
suppose $\beta \in \mathbb{R}^{BT}$ is partitioned into $n$ blocks and
blocks are sampled via $\tau$-nice sampling.
The \emph{curvature weights}
\begin{equation}\label{eq:nu-def-general}
\nu_i
\;=\;
\sum_{j:\,i\in\mathcal{I}_j}
\left(
1
+
\frac{(\kappa_j-1)\,(\tau - 1)}{\max\{1,\,n - 1\}}
\right)
L_{ji},
\qquad i = 1,\ldots,n,
\end{equation}
satisfy the \emph{Expected Separable Overapproximation (ESO)} condition
\citep[see][]{richtarik2014iteration, AccCD}:
\begin{equation}\label{eq:ESO}
\E_{\bar{\mathcal{S}}}\!\left[
F_\sigma(\beta + d_{[S]})
\right]
\;\le\;
F_\sigma(\beta)
+
\frac{\tau}{n}
\left(
\langle \nabla F_\sigma(\beta),\, d \rangle
+
\frac{1}{2}
\sum_{i=1}^n \nu_i \|d_{(i)}\|_i^2
\right),
\end{equation}
where the expectation is taken over the random subset $S$ drawn under $\tau$-nice sampling, 
$d=(d_{(1)},\ldots,d_{(n)})$ is an arbitrary direction, and $d_{[S]}$ denotes the block–restricted perturbation in~\eqref{eq:d[S]}. 
When $n=\tau=T$, the blocks are indexed by stage (so $\beta_{(i)}=\beta_t$), 
the weights in~\eqref{eq:nu-def-general} simplify to those in~\eqref{eq:nu-def}, 
and \eqref{eq:ESO} reduces to \eqref{eq:ESOequation} in the main paper with $\mathcal{I}_j$ replaced by $C_j$.\looseness=-1

\item For any matrix $A \in \mathbb{R}^{|\U| \times BT}$, we use the spectral norm
\begin{equation}\label{eq:matrix-norm}
\|A\|
:=
\sup_{\|z\|_2=1} \|Az\|_2.
\end{equation}
Equivalently, $\|A\|$ is the largest singular value of $A$. The spectral norm is bounded by the Frobenius norm:
\[
\|A\| \le \|A\|_F
:=
\left(\sum_{r,c} a_{rc}^2\right)^{1/2},
\]
where $a_{rc}$ indicates the element of $A$ that is in row $r$ and column $c$.
If the Euclidean norm of each row of $A$, denoted by $\|a_{r,\cdot}\|_2$, is bounded by $R$, then we get\looseness=-1
\begin{equation}\label{eq:F-norm-bound}
\|A\|_F^2
=
\sum_{r=1}^{|\U|} \|a_{r,\cdot}\|_2^2
\le
|\U| R^2,
\qquad\text{and hence}\qquad
\|A\|
\le
\sqrt{|\U|}\,R,
\end{equation}
Thus, uniform $\ell_2$ bounds on the rows immediately yield a bound on the spectral norm.

We denote by $A^{(i)}\in\R^{|\U| \times n_i}$ a submatrix of $A$ whose columns are restricted to block $i$ components. The spectral norm of $A^{(i)}$ is defined analogously.

\item We use the diagonal operator $\diag(p)$ to denote the diagonal matrix whose diagonal entries are given by the vector $p$.

\item We write
\[
D(u,v) := \frac{1}{2}\|u-v\|_\nu^2
\]
for the quadratic distance induced by the weighted norm.

\item By convention, we set $\gamma^{-1}\equiv 0$ in summation formulas involving stage-indexed discount factors, so that boundary terms at $t=0$ vanish without requiring separate treatment. This is a notational shorthand and does not refer to the quantity $1/\gamma$.

\end{itemize}

\section{Saddle-Point Representation of ALP}
\label{sec:EC-ALP-saddle}

This section proves Proposition~\ref{prop:ALP-saddlepoint}. We form the Lagrangian dual of ALP, show that the optimal dual variables decompose as discount-weighted probability measures on the exogenous state space, and use this structure to establish equality of optimal values and solution sets between ALP and the saddle-point problem~\eqref{eq:alp-saddlepoint}.

\begin{proof}{Proof of Proposition~\ref{prop:ALP-saddlepoint}.}

Fix $B>0$ and recall $U_{\mathrm{ALP},B}$ denotes the ALP optimal value. The constraint $\Delta_t(\beta;\sx_t) \le 0$ in~\eqref{eq:ALP-def} is equivalent to the following constraints:
\begin{equation}\label{eq:ALP-disaggregated}
\phi_t(s_t)^\top \beta_t
- \gamma\,\E\!\left[\phi_{t+1}(s_{t+1})^\top \beta_{t+1}
  \,\middle|\, s_t, a_t\right]
\ge r_t(s_t,a_t),
\quad
\forall\,t \in \tset,\; \se_t \in \Endo_t,\;
a_t \in \action_t,\; \sx_t \in \Exo_t,
\end{equation}
where $s_t = (\se_t,\sx_t)$,
$s_{t+1} = (h(\se_t,a_t),\sx_{t+1})$, and
$\hat V_{T,\beta} \equiv 0$.
Each constraint is linear in $\beta$, so~\eqref{eq:ALP-def} is a semi-infinite linear program with finitely many variables $\beta \in \Omega \subseteq \R^{BT}$ and constraints indexed by $(t, \se_t, a_t, \sx_t)$.\looseness=-1

\smallskip
\paragraph{Lagrangian and upper bound.}
For each constraint indexed by $(t,\se_t,a_t)$, introduce a nonnegative measure $\lambda_{t,\se_t,a_t}$ on $\Exo_t$. The Lagrangian is
\begin{equation}\label{eq:ALP-lagrangian}
\mathcal{L}(\beta,\lambda)
=
\hat V_{0,\beta}(s_0)
+
\sum_{t=0}^{T-1}
\sum_{\substack{\se_t \in \Endo_t \\ a_t \in \action_t}}
\int_{\Exo_t}
\Big[
r_t(s_t,a_t)
+ \gamma\,\E\!\left[\phi_{t+1}(s_{t+1})^\top\beta_{t+1}
  \mid s_t,a_t\right]
- \phi_t(s_t)^\top\beta_t
\Big]
\lambda_{t,\se_t,a_t}(d\sx_t),
\end{equation}
where $\lambda = \{\lambda_{t,\se_t,a_t}\}$ is the collection of nonnegative measures. For any fixed $\beta$, if some constraint is violated, concentrating mass at the violating point and scaling arbitrarily gives $\sup_{\lambda \ge 0}\mathcal{L}(\beta,\lambda)=+\infty$.
If all constraints hold, the integrands are nonpositive and the supremum is attained at $\lambda=0$, giving $\hat V_{0,\beta}(s_0)$. Therefore
\begin{equation}\label{eq:ALP-minmax-nonneg}
\min_{\beta \in \Omega}\;
\sup_{\lambda \ge 0}\;
\mathcal{L}(\beta,\lambda)
\;=\;
U_{\mathrm{ALP},B}.
\end{equation}
Any $\alpha \in \mathcal{D}$ induces a nonnegative measure sequence by setting $\lambda_{t,\se_t,a_t}(d\sx_t) = \gamma^t\,\alpha_t(d\sx_t)$ for the $(\se_t,a_t)$ achieving the maximum in $\Delta_t$ (ties broken arbitrarily) and zero otherwise. For this choice,
\[
\mathcal{L}(\beta,\lambda)
= \hat V_{0,\beta}(s_0)
+ \sum_{t=0}^{T-1}\gamma^t\,
\E_{\alpha_t}\!\left[\Delta_t(\beta;\sx_t)\right].
\]
Since this is one particular nonnegative $\lambda$, the supremum over $\alpha \in \mathcal{D}$ is bounded above by the supremum over all nonnegative measures. Minimizing over $\beta$ gives
\begin{equation}\label{eq:upper-bound}
\min_{\beta\in\Omega}\;
\sup_{\alpha \in \mathcal{D}}
\left\{
\hat V_{0,\beta}(s_0)
+\sum_{t=0}^{T-1}\gamma^t\,
\E_{\alpha_t}\!\left[\Delta_t(\beta;\sx_t)\right]
\right\}
\;\le\;
U_{\mathrm{ALP},B}.
\end{equation}

\medskip
\paragraph{Strong duality, dual decomposition, and lower bound.}
Under compactness of $\Omega$, continuity of $\phi_t$ and $r_t$, and the Slater condition (existence of $\hat\beta \in \Omega$ satisfying all constraints strictly), strong duality holds for the semi-infinite linear program~\eqref{eq:ALP-def} \citep[see, e.g.,][]{goberna1998linear}, and the dual optimum is attained at some $\lambda^*$:
\begin{equation}\label{eq:strong-duality}
\sup_{\lambda \ge 0}\;
\inf_{\beta \in \Omega}\;
\mathcal{L}(\beta,\lambda)
\;=\;
U_{\mathrm{ALP},B}.
\end{equation}
We now show that $\lambda^*$ decomposes as $\gamma^t\,\alpha^*_t$
with $\alpha^*_t \in \mathcal{P}(\Exo_t)$, where
$\mathcal{P}(\Exo_t)$ denotes the set of probability measures on
$\Exo_t$.

By the assumption in Proposition~\ref{prop:ALP-saddlepoint} that the ALP optimal solution lies in the interior of $\Omega$, the stationarity conditions at $\lambda^*$ hold as equalities. Collecting the coefficient of $\beta_0$: the objective contributes $\phi_0(s_0)$ and the $t=0$ constraints contribute $-\sum_{a_0}\int_{\Exo_0}\phi_0(s_0)\, \lambda^*_{0,\se_0,a_0}(d\sx_0)$ (recall that $\se_0$ is a singleton).
Applying the stationarity condition to the constant component $\phi_{0,1}\equiv 1$ (which holds by assumption) gives
\begin{equation}\label{eq:stationarity-beta0}
\sum_{a_0 \in \action_0}
\lambda^*_{0,\se_0,a_0}(\Exo_0) = 1,
\end{equation}
so the total mass of the stage-$0$ dual variables equals $1$.

For $t \ge 1$, the coefficient of $\beta_t$ receives contributions from stage-$t$ constraints (containing $-\phi_t(s_t)^\top\beta_t$)
and stage-$(t{-}1)$ constraints (containing $\gamma\,\E[\phi_t(s_t)^\top\beta_t\mid s_{t-1},a_{t-1}]$). Setting this to zero yields the flow-balance equation
\begin{equation}\label{eq:flow-balance}
\sum_{\se_t,\, a_t}
\int_{\Exo_t}\phi_t(\se_t,\sx_t)\,
\lambda^*_{t,\se_t,a_t}(d\sx_t)
=
\gamma
\sum_{\se_{t-1},\, a_{t-1}}
\int_{\Exo_{t-1}}
\E\!\left[\phi_t(h(\se_{t-1},a_{t-1}),\sx_t)
\,\middle|\,\sx_{t-1}\right]
\lambda^*_{t-1,\se_{t-1},a_{t-1}}(d\sx_{t-1}).
\end{equation}
Define the total mass at stage $t$ as $M_t:=\sum_{\se_t\in\Endo_t,\,a_t\in\action_t} \lambda^*_{t,\se_t,a_t}(\Exo_t)$.
Applying~\eqref{eq:flow-balance} to the constant component $\phi_{t,1}\equiv 1$ (by assumption) gives $M_t = \gamma\,M_{t-1}$.
Iterating from $M_0=1$ yields \begin{equation}\label{eq:total-mass}
M_t = \gamma^t, \qquad t \in \tset.
\end{equation}

Define the aggregate measure $\bar\lambda^*_t :=\sum_{\se_t\in\Endo_t,\,a_t\in\action_t} \lambda^*_{t,\se_t,a_t}$, which has total mass $\gamma^t$, and write $\bar\lambda^*_t = \gamma^t\,\alpha^*_t$ with $\alpha^*_t\in\mathcal{P}(\Exo_t)$. Because $\Delta_t$ maximizes over $(\se_t,a_t)$, concentrating each $\lambda^*_{t,\se_t,a_t}$ on the maximizing pair does not decrease the Lagrangian, so the stage-$t$ term becomes $\gamma^t\,\E_{\alpha^*_t}[\Delta_t(\beta;\sx_t)]$. Since $\alpha^*\in\mathcal{D}$,
\[
\sup_{\alpha\in\mathcal{D}}\;
\inf_{\beta\in\Omega}
\left\{
\hat V_{0,\beta}(s_0)
+\sum_{t=0}^{T-1}\gamma^t\,
\E_{\alpha_t}\!\left[\Delta_t(\beta;\sx_t)\right]
\right\}
\;\ge\;
\inf_{\beta\in\Omega}\,
\mathcal{L}(\beta,\lambda^*)
\;=\;
U_{\mathrm{ALP},B},
\]
where the last equality is strong duality~\eqref{eq:strong-duality}. Since $\Omega$ is compact and the objective is linear (hence continuous) in $\beta$, the infimum over $\Omega$ is attained and may be replaced by a minimum throughout. The minimax inequality then gives
$\min_{\beta\in\Omega}\sup_{\alpha\in\mathcal{D}}\{\cdot\} \ge \sup_{\alpha\in\mathcal{D}}\min_{\beta\in\Omega}\{\cdot\} \ge U_{\mathrm{ALP},B}$,
which yields
\begin{equation}\label{eq:lower-bound}
\min_{\beta\in\Omega}\;
\sup_{\alpha\in\mathcal{D}}
\left\{
\hat V_{0,\beta}(s_0)
+\sum_{t=0}^{T-1}\gamma^t\,
\E_{\alpha_t}\!\left[\Delta_t(\beta;\sx_t)\right]
\right\}
\;\ge\;
U_{\mathrm{ALP},B}.
\end{equation}

Combining~\eqref{eq:upper-bound} and~\eqref{eq:lower-bound} establishes equality of optimal values:
\begin{equation}\label{eq:value-equivalence}
\min_{\beta\in\Omega}\;
\sup_{\alpha\in\mathcal D}\;
\left\{
\hat V_{0,\beta}(s_0)
+
\sum_{t=0}^{T-1}\gamma^t\,
\E_{\alpha_t}\!\left[\Delta_t(\beta;\sx_t)\right]
\right\}
\;=\;
U_{\mathrm{ALP},B}.
\end{equation}

\medskip
\paragraph{Equivalence of optimal solution sets.}
It remains to show that the two problems share the same optimal solution set in $\beta$. Suppose $\beta^{\mathrm{ALP}}$ is optimal for ALP. Then $\hat V_{0,\beta^{\mathrm{ALP}}}(s_0) = U_{\mathrm{ALP},B}$, since $\hat V_{0,\beta}(s_0)$ is the ALP objective. Primal feasibility gives $\Delta_t(\beta^{\mathrm{ALP}};\sx_t)\le 0$ for all $t$ and $\sx_t$, so for any $\alpha\in\mathcal{D}$ the saddle-point objective satisfies
\[
\hat V_{0,\beta^{\mathrm{ALP}}}(s_0)
+\sum_{t=0}^{T-1}\gamma^t\,
\E_{\alpha_t}\!\left[\Delta_t(\beta^{\mathrm{ALP}};\sx_t)\right]
\;\le\;
U_{\mathrm{ALP},B}.
\]
Combined with~\eqref{eq:value-equivalence}, $\beta^{\mathrm{ALP}}$ achieves the minimum in~\eqref{eq:value-equivalence} and is therefore, saddle-point optimal. Conversely, suppose $\bar\beta$ achieves the minimum in~\eqref{eq:value-equivalence}. If $\bar\beta$ were primal infeasible for ALP, there would exist $t$, $\sx_t$, and $(\se_t,a_t)$ with $\Delta_t(\bar\beta;\sx_t)>0$; concentrating $\alpha_t$ on that $\sx_t$ would drive the saddle-point objective above $U_{\mathrm{ALP},B}$, contradicting~\eqref{eq:value-equivalence}. Hence $\bar\beta$ is primal feasible for ALP and achieves value $U_{\mathrm{ALP},B}$, so it is ALP optimal.
\hfill\Halmos
\end{proof}

\section{Ordering of ALP, WTCA, and PO Bounds}
\label{sec:EC-WTCA-ordering}

This section proves Proposition~\ref{prop:WTCA-comparison}. For a fixed basis family of size $B$, recall that $U_{\mathrm{PO},B}$, $U_{\mathrm{WTCA},B}$, and $U_{\mathrm{ALP},B}$ denote the optimal objective values obtained by solving the corresponding formulations over $\beta \in \Omega$. We show that WTCA defines a valid upper bound on the optimal value $V_0^*(s_0)$ and that the bounds satisfy
\[
V_0^*(s_0)
\le
U_{\mathrm{PO},B}
\le
F_\mathrm{PO}(\beta^\mathrm{WTCA})
\le
U_{\mathrm{WTCA},B}
\le
U_{\mathrm{ALP},B},
\]
where $\beta^\mathrm{WTCA}$ is an optimal solution to WTCA. We also show that WTCA is a relaxation of ALP.

\begin{proof}{Proof of Proposition~\ref{prop:WTCA-comparison}.}

Fix $B$ and the associated value function approximations $\hat V_{t,\beta}$ for $t\in\tset$. We first prove part (a) and then part (b).

\medskip
\paragraph{Part (a): WTCA is a relaxation of ALP.}

To facilitate comparison with the ALP formulation, we rewrite WTCA as a constrained problem by introducing slack variables.

Introduce auxiliary variables $u_t\ge0$ for $t\in\tset$. Then \eqref{eq:WTCA-def} is equivalent to
\begin{equation}
\label{eq:WTCA-constraints}
\min_{\beta\in\Omega,\;u\in\mathbb R^T}
\hat V_{0,\beta}(s_0)
+
\sum_{t=0}^{T-1}\gamma^t u_t
\quad
\text{s.t.}\quad
\E[\Delta_t(\beta;\sx_t)]\le u_t,
\;\forall t\in\tset .
\end{equation}

Define the feasible sets
\[
\mathcal F_{\mathrm{ALP}}
:=
\{\beta:\; \beta\in\Omega,\;
\Delta_t(\beta;w)\le0,\;
\;\forall w\in\Exo_t,\;\forall t\in\tset\},
\]
and
\[
\mathcal F_{\mathrm{WTCA}}
:=
\{(\beta,u):\;
\beta\in\Omega,\;
u\in\mathbb R_+^T,\;
\E[\Delta_t(\beta;\sx_t)]\le u_t\;
\;\forall t\in\tset\}.
\]

If $\beta\in\mathcal F_{\mathrm{ALP}}$, then $\Delta_t(\beta;w)\le0$ for all $w\in\Exo_t$. Taking expectations yields $\E[\Delta_t(\beta;\sx_t)]\le0$. Hence $(\beta,u)$ with $u_t=0$ is feasible for WTCA. Evaluating the WTCA objective at this point gives $\hat V_{0,\beta}(s_0).$ Therefore
\[
\min_{(\beta,u)\in\mathcal F_{\mathrm{WTCA}}}
\left\{
\hat V_{0,\beta}(s_0)
+
\sum_{t=0}^{T-1}\gamma^t u_t
\right\}
\le
\hat V_{0,\beta}(s_0)
\qquad
\forall\beta\in\mathcal F_{\mathrm{ALP}}.
\]

Taking the minimum over $\beta\in\mathcal F_{\mathrm{ALP}}$ gives
\[
U_{\mathrm{WTCA},B}
\le
U_{\mathrm{ALP},B}.
\]

Thus, WTCA enlarges the feasible region by permitting positive Bellman violations, while penalizing their expected values in the objective. Consequently, WTCA forms a relaxation of ALP and yields an improved upper bound. The validity of $U_{\mathrm{WTCA},B}$ as an upper bound on $ V_0^*(s_0)$ is proven in Part (b).

\medskip
\paragraph{Part (b): Ordering of PO and WTCA bounds.}

For any $\beta\in\Omega$, recall that
\[
F_{\mathrm{PO}}(\beta)
=
\E\!\left[
\max_{a\in\action}
\sum_{t=0}^{T-1}\gamma^t
\Big(
r_t(s_t,a_t)
+
\gamma\,\E\!\left[
\hat V_{t+1,\beta}(s_{t+1})
\mid\sx_t
\right]
-
\gamma\,\hat V_{t+1,\beta}(s_{t+1})
\Big)
\right],
\]
and
\[
F_{\mathrm{WTCA}}(\beta)
=
\hat V_{0,\beta}(s_0)
+
\sum_{t=0}^{T-1}\gamma^t
\E\!\left[\Delta_t(\beta;\sx_t)\right],
\]
where the Bellman deviations $\Delta_t(\beta;\sx_t)$ are defined in \eqref{eq:Delta-Bellmanerror}.

\medskip

To relate the two objectives, we add and subtract $\hat V_{t,\beta}(s_t)$ inside the summand of $F_{\mathrm{PO}}$. In particular, for each stage $t$,\looseness=-1
\begin{align}
r_t(s_t,a_t)
+
\gamma\,\E\!\left[
\hat V_{t+1,\beta}(s_{t+1})
\mid\sx_t
\right]
-
\gamma\,\hat V_{t+1,\beta}(s_{t+1})
&=
\Big(
r_t(s_t,a_t)
+
\gamma\,\E\!\left[
\hat V_{t+1,\beta}(s_{t+1})
\mid\sx_t
\right]
-
\hat V_{t,\beta}(s_t)
\Big)
\nonumber
\\
&\quad+
\Big(
\hat V_{t,\beta}(s_t)
-
\gamma\,\hat V_{t+1,\beta}(s_{t+1})
\Big).
\label{eq:PO-terms}
\end{align}

Along any realized trajectory and any (possibly anticipative) action vector $a=(a_0,\ldots,a_{T-1})$, the endogenous state sequence $\{\se_t\}$ is determined by $(\se_0,a_0,\ldots,a_{t-1})$. Because the exogenous process evolves independently of actions, the conditional distribution of $\sx_{t+1}$ given $(s_t,a_t)$ depends only on $\sx_t$. Hence for each fixed pair $(\se_t,a_t)$,
\[
\E\!\left[
\hat V_{t+1,\beta}(s_{t+1})\,\Big\vert\, s_t,a_t
\right]
=
\E\!\left[
\hat V_{t+1,\beta}(s_{t+1})\,\Big\vert\, \sx_t
\right].
\]

Using this equivalence, the first bracket in \eqref{eq:PO-terms} is bounded above by $\Delta_t(\beta;\sx_t)$ for the realized $\sx_t$ by \eqref{eq:Delta-Bellmanerror}. Thus, for every trajectory $\sx$ and every action vector $a$,
\begin{align*}
\sum_{t=0}^{T-1}\gamma^t
\Big(
r_t(s_t,a_t)
+
\gamma\,\E\!\left[
\hat V_{t+1,\beta}(s_{t+1})
\mid\sx_t
\right]
-
\gamma\,\hat V_{t+1,\beta}(s_{t+1})
\Big)
&\le
\sum_{t=0}^{T-1}\gamma^t
\Delta_t(\beta;\sx_t)
\\
&\quad+
\sum_{t=0}^{T-1}\gamma^t
\Big(
\hat V_{t,\beta}(s_t)
-
\gamma\,\hat V_{t+1,\beta}(s_{t+1})
\Big).
\end{align*}

The second sum telescopes and gives
\[
\sum_{t=0}^{T-1}\gamma^t
\Big(
\hat V_{t,\beta}(s_t)
-
\gamma\,\hat V_{t+1,\beta}(s_{t+1})
\Big)
=
\hat V_{0,\beta}(s_0)-\gamma^T\hat V_{T,\beta}(s_T).
\]
Because $\hat V_{T,\beta}\equiv 0$, this simplifies to $\hat V_{0,\beta}(s_0)$. Consequently,
\[
\sum_{t=0}^{T-1}\gamma^t
\Big(
r_t(s_t,a_t)
+
\gamma\,\E\!\left[
\hat V_{t+1,\beta}(s_{t+1})
\mid\sx_t
\right]
-
\gamma\,\hat V_{t+1,\beta}(s_{t+1})
\Big)
\le
\hat V_{0,\beta}(s_0)
+
\sum_{t=0}^{T-1}\gamma^t
\Delta_t(\beta;\sx_t).
\]

Taking the maximum over $a\in\action$ preserves the inequality, and taking expectations with respect to $\mu$ yields\looseness=-1
\[
F_{\mathrm{PO}}(\beta)
\le
\hat V_{0,\beta}(s_0)
+
\sum_{t=0}^{T-1}\gamma^t
\E\!\left[\Delta_t(\beta;\sx_t)\right]
=
F_{\mathrm{WTCA}}(\beta).
\]

Since this inequality holds for every $\beta\in\Omega$,
it holds in particular at the WTCA optimal solution
$\beta^{\mathrm{WTCA}}$.
Hence
\[
U_{\mathrm{PO},B} = \min_{\beta\in\Omega} F_\mathrm{PO}(\beta) 
\le
F_{\mathrm{PO}}(\beta^{\mathrm{WTCA}})
\le
F_{\mathrm{WTCA}}(\beta^{\mathrm{WTCA}})
=
U_{\mathrm{WTCA},B}.
\]

Because $U_{\mathrm{PO},B}$ is known to be an upper bound on $V^*_0(s_0)$ (see \cite{brown2010information,desai2012pathwise}), the inequality above also implies that WTCA produces a valid upper bound on the optimal value.\hfill\Halmos
\end{proof}

\section{Log-Sum-Exp Properties and Curvature Bounds}
\label{sec:EC-log-sum-exp-smoothing-curvature}

Section~\ref{sec:seqBCD} introduces the log-sum-exp (LSE) smoothing used to construct the differentiable objectives
$F_{\mathrm{WTCA},\sigma}$ and $F_{\mathrm{PO},\sigma}$. This section collects analytical properties of the LSE function and
derives curvature bounds for these smoothed formulations. We first derive gradient and Hessian representations for the log-sum-exp of affine functions, which yield Lipschitz constants for the smoothed gradient. We then specialize these bounds to WTCA and PO and establish explicit
curvature weights $\nu_i$ satisfying the ESO condition~\eqref{eq:ESO:block-lipschitz} for general block partition size $n$ and $\tau$-nice sampling.\looseness=-1

\begin{assumption}\label{ass:normalized-basis functions}
The vectors of basis functions
$\phi_t := (\phi_{t,1},\ldots,\phi_{t,B})$ are uniformly bounded in
$\ell_2$-norm:\looseness=-1
\[
\|\phi_t(s_t)\|_2 \le 1
\qquad
\forall s_t \in \Sspace_t, t\in\tset.
\]
\end{assumption}

This normalization is a standard assumption in linear reinforcement learning and bandit theory \citep{abbasi2011improved, jin2020provably, lattimore2020bandit} and can always be enforced by rescaling the basis functions, which is absorbed into the weights $\beta$ without loss of generality.\looseness=-1

\paragraph{Properties of log-sum-exp of affine functions.}

Recall the log-sum-exp function of the form~\eqref{eq:general-smooth-component}. To simplify notation, we drop the component index $j$ and write
\[
f_\sigma(\beta,\sx)
=
c\cdot\sigma
\log
\sum_{u\in\U}
\exp\!\left(
\frac{a_{\sx}(u)^\top\beta + b_{\sx}(u)}{\sigma}
\right),
\]
where $c>0$ is a scaling constant, $\U$ is a finite index set, and $a_{\sx}(u)\in\R^{BT}$ is the affine coefficient vector. All results in this section hold for each component $f_{j,\sigma}$ by restoring the index $j$. Define the matrix $A_{\sx} \in \R^{|\U|\times BT}$ whose rows are
$a_{\sx}(u)^\top$, $u\in\U$, and let $p(\beta,\sx)\in\R^{|\U|}$ denote the weights
\begin{equation}
\label{eq:p}
p_u(\beta,\sx)
=
\dfrac{
\exp\!\left(
(a_{\sx}(u)^\top\beta + b_{\sx}(u))/\sigma
\right)
}{
\sum_{v\in\U}
\exp\!\left(
(a_{\sx}(v)^\top\beta + b_{\sx}(v))/\sigma
\right)
}.
\end{equation}
It is well known (see, e.g., \citealp{boyd2004convex}) that
\begin{equation*}
\nabla_\beta f_\sigma(\beta,\sx)
=
c\cdot \sum_{u\in\U}
p_u(\beta,\sx)\, a_{\sx}(u),
\end{equation*}
and
\begin{equation}
\label{eq:hessian}
\nabla^2_\beta f_\sigma(\beta,\sx)
=
\frac{c}{\sigma}
A_{\sx}^\top
\big(
\diag(p(\beta,\sx)) - p(\beta,\sx)p(\beta,\sx)^\top
\big)
A_{\sx}.
\end{equation}
Because $p(\beta,\sx)$ is a probability vector, $p_u\in[0,1]$ for all $u$, so $\diag(p)\preceq I$. Since $pp^\top\succeq 0$, it follows that $0\preceq\diag(p)-pp^\top\preceq I$. Substituting into~\eqref{eq:hessian} gives
\begin{equation}
\label{eq:Hessian-bound}
\nabla^2_\beta f_\sigma(\beta,\sx)
\preceq
\frac{c}{\sigma}
A_{\sx}^\top A_{\sx},
\end{equation}
and taking spectral norms yields
\begin{equation}
\label{eq:Hessian-norm-bound}
\|\nabla^2_\beta f_\sigma(\beta,\sx)\|
\le
\frac{c}{\sigma}
\|A_{\sx}\|^2.
\end{equation}

\begin{lemma}[Theorem~5.12 in \citet{beck2017}]
\label{lem:Hessian-and-Lipschitz}
Let $g$ be twice continuously differentiable on $\Omega$. If $\sup_{\beta\in\Omega}\|\nabla^2 g(\beta)\| \le L$, then $\nabla g$ is Lipschitz continuous with constant $L$.
\end{lemma}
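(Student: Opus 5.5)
The lemma follows from the integral form of the mean value theorem applied to $\nabla g$ along line segments in $\Omega$. Because $\Omega$ is convex (a box or Euclidean ball, as stipulated after \eqref{eq:general-stochastic-form}), the whole segment $\beta+s(\beta'-\beta)$, $s\in[0,1]$, stays in $\Omega$ for any $\beta,\beta'\in\Omega$, and $g$ is twice continuously differentiable there.

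The key identity is
\[
\nabla g(\beta')-\nabla g(\beta)=\int_0^1 \nabla^2 g\big(\beta+s(\beta'-\beta)\big)\,(\beta'-\beta)\,ds .
\]
It holds because the map $s\mapsto \nabla g(\beta+s(\beta'-\beta))$ is continuously differentiable with derivative $\nabla^2 g(\beta+s(\beta'-\beta))(\beta'-\beta)$, so the fundamental theorem of calculus can be applied componentwise.

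Take Euclidean norms and move the norm inside the integral. Bound the integrand by $\|\nabla^2 g(\cdot)\|\,\|\beta'-\beta\|_2$, where $\|\cdot\|$ is the spectral norm \eqref{eq:matrix-norm}, which is the operator norm induced by $\|\cdot\|_2$. Each Hessian on the segment has spectral norm at most $\sup_{\beta\in\Omega}\|\nabla^2 g(\beta)\|\le L$. This gives
\[
\|\nabla g(\beta')-\nabla g(\beta)\|_2\le L\|\beta'-\beta\|_2 ,
\]
which is Lipschitz continuity of $\nabla g$ on $\Omega$ with constant $L$.

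The only delicate point is convexity of the domain, which is needed so that the segment never leaves the region where the Hessian bound holds. Here convexity is guaranteed by the choice of $\Omega$. The lemma will be used with $g=f_\sigma(\cdot,\sx)$, which is smooth on all of $\R^{BT}$, so no boundary issues arise. One may also need the Hessian bound on an open neighbourhood of $\Omega$ for differentiability at the boundary, but the bound \eqref{eq:Hessian-norm-bound} holds globally, so this costs nothing. For block-wise constants $L_{ji}$, the same argument applies restricted to perturbations $d_{[i]}$, with the Hessian replaced by the block $P_i\nabla^2 g\,P_i^\top$.
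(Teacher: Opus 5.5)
Your proof is correct and follows the same route as the source the paper relies on: the paper gives no proof of its own and cites Theorem~5.12 of \citet{beck2017}, whose Hessian-to-Lipschitz direction is exactly your fundamental-theorem-of-calculus bound along segments, using the $\ell_2$-induced operator norm. Your point that convexity of $\Omega$ keeps each segment inside the region where the Hessian bound holds is the right adaptation, since the cited theorem is stated on the whole space while the paper's $\Omega$ is a convex box or ball.
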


Combining \eqref{eq:Hessian-norm-bound} with Lemma~\ref{lem:Hessian-and-Lipschitz}, for each fixed $\sx$ the gradient $\nabla_\beta f_\sigma(\beta,\sx)$ is Lipschitz continuous with constant $L = (c/\sigma)\|A_{\sx}\|^2$.

\medskip
\paragraph{Curvature bounds for WTCA.}

The affine coefficients of the WTCA components $f_{\mathrm{WTCA},t}(\beta,\sx_t)$ are given in~\eqref{eq:WTCA-row0} at $t=0$ and in~\eqref{eq:WTCA-rows} for $t\ge1$. Let $A_{\mathrm{WTCA},t,\sx_t}$ denote the matrix whose rows are $a_{t,\sx_t}(u)^\top$, $u\in\U_t = \Endo_t\times\action_t$. For block $i$, let $A_{\mathrm{WTCA},t,\sx_t}^{(i)}$ denote the submatrix formed by retaining the columns of $A_{\mathrm{WTCA},t,\sx_t}$ corresponding to block $\beta_{(i)}$.

\begin{lemma}
\label{lem:WTCA-smooth}
Fix $\sigma>0$. The smoothed WTCA objective $F_{\mathrm{WTCA},\sigma}$ is convex and continuously differentiable. Moreover:

\begin{enumerate}
\item[(i)] (\emph{Uniform approximation})
\[
0
\le
F_{\mathrm{WTCA},\sigma}(\beta)-F_{\mathrm{WTCA}}(\beta)
\le
\sigma\sum_{t=0}^{T-1}\gamma^t\log(|\Endo_t||\action_t|).
\]

\item[(ii)] (\emph{Block Lipschitz continuity})
For each stage $t$ and block $i$,
\begin{equation}\label{eq:WTCA-Lipschitz-it}
L_{ti}^{\mathrm{WTCA}}
:=
\frac{\gamma^t}{\sigma}
\E\!\left[
\|A_{\mathrm{WTCA},t,\sx_t}^{(i)}\|^2
\right]
\end{equation}
is a valid Lipschitz constant of $\nabla_{\beta_{(i)}}\E[f_{\mathrm{WTCA},t,\sigma}(\beta,\sx_t)]$. Consequently,
\begin{equation}\label{eq:WTCA-block-Lipschitz}
L_i^{\mathrm{WTCA}}
:=
\sum_{t:\,i\in C_t}
L_{ti}^{\mathrm{WTCA}}
=
\sum_{t:\,i\in C_t}
\frac{\gamma^t}{\sigma}
\E\!\left[
\|A_{\mathrm{WTCA},t,\sx_t}^{(i)}\|^2
\right]
\end{equation}
is a valid block Lipschitz constant of $\nabla_\beta F_{\mathrm{WTCA},\sigma}$.

\item[(iii)] (\emph{ESO curvature weights})
For general block partition size $n$ and $\tau$-nice sampling, the curvature weights
\begin{equation}\label{eq:WTCA-nu}
\nu_i
\leq
\sum_{t:\,i\in C_t}
\left(
1+\frac{(\kappa_t-1)(\tau-1)}{\max\{1,n-1\}}
\right)
\frac{\gamma^t(1+\gamma^2)}{\sigma}
|\Endo_t||\action_t|,
\qquad i=1,\ldots,n,
\end{equation}
satisfy the ESO condition~\eqref{eq:nu-def-general}. For $n=\tau=T$ this reduces to
$\nu_i \leq \sum_{t:\,i\in C_t} \frac{2\gamma^t(1+\gamma^2)}{\sigma}|\Endo_t||\action_t|$, consistent with~\eqref{eq:nu-def} of the main paper.
\end{enumerate}
\end{lemma}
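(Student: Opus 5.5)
We prove the three parts of Lemma~\ref{lem:WTCA-smooth} in order, working componentwise and then passing to $F_{\mathrm{WTCA},\sigma}=\E[\sum_t f_{\mathrm{WTCA},t,\sigma}(\beta,\sx_t)]$ by linearity of expectation.

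\textbf{Convexity, differentiability, and part (i).} Each $f_{\mathrm{WTCA},t,\sigma}(\cdot,\sx_t)$ is a positively scaled log-sum-exp of affine functions with scaling $c_t=\gamma^t$. It is therefore convex and $C^1$ for every $\sx_t$. Its gradient $\gamma^t\sum_u p_u a_{t,\sx_t}(u)$ is uniformly bounded on $\Omega$, because Assumption~\ref{ass:normalized-basis functions} gives $\|a_{t,\sx_t}(u)\|_2\le 1+\gamma$. Dominated convergence then lets us interchange expectation and differentiation, so $F_{\mathrm{WTCA},\sigma}$ is convex and continuously differentiable. For part (i), we apply the pointwise LSE bound $0\le f_{j,\sigma}-f_j\le \sigma c_j\log|\U_j|$ with $c_t=\gamma^t$ and $|\U_t|=|\Endo_t||\action_t|$; at $t=0$ we have $|\U_0|=|\action_0|$ and $|\Endo_0|=1$. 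Summing over $t$ and taking expectations gives the stated bound.

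\textbf{Part (ii).} Fix $t$, $\sx_t$, and a block $i$. The block-$i$ diagonal Hessian block is $\frac{\gamma^t}{\sigma}(A^{(i)})^\top(\diag(p)-pp^\top)A^{(i)}$, which by the same argument as \eqref{eq:Hessian-bound} is dominated by $\frac{\gamma^t}{\sigma}(A^{(i)})^\top A^{(i)}$. Its spectral norm is therefore at most $\frac{\gamma^t}{\sigma}\|A^{(i)}_{\mathrm{WTCA},t,\sx_t}\|^2$. Lemma~\ref{lem:Hessian-and-Lipschitz}, applied to $d\mapsto f(\beta+d_{[i]},\sx_t)$, gives the pointwise block-Lipschitz constant. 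To move this inside the expectation we use Jensen's inequality: $\|\E[\nabla_{(i)}f(\beta+d_{[i]})-\nabla_{(i)}f(\beta)]\|\le \E\|\cdot\|\le \E[\frac{\gamma^t}{\sigma}\|A^{(i)}\|^2]\|d\|$, which is \eqref{eq:WTCA-Lipschitz-it}. Blocks not in $C_t$ contribute zero. Summing over the components with $i\in C_t$ and using the triangle inequality yields \eqref{eq:WTCA-block-Lipschitz}.

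\textbf{Part (iii).} The ESO statement \eqref{eq:nu-def-general}--\eqref{eq:ESO} is taken as given from \citet{richtarik2016parallel,AccCD}, since it holds for any partially separable smooth convex objective with these block constants. What remains is to bound $\E\|A^{(i)}\|^2$ explicitly. We bound each row of the full matrix $A_{\mathrm{WTCA},t,\sx_t}$. For $t\ge1$, its squared $\ell_2$ norm is $\|\phi_t\|_2^2+\gamma^2\|\E[\phi_{t+1}\mid\sx_t]\|_2^2$. Jensen's inequality on the conditional expectation and Assumption~\ref{ass:normalized-basis functions} bound this by $1+\gamma^2$; the $t=0$ row is even smaller, at $\gamma^2$. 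Since the row norms of a column-submatrix are no larger, \eqref{eq:F-norm-bound} gives $\|A^{(i)}\|^2\le|\U_t|(1+\gamma^2)=|\Endo_t||\action_t|(1+\gamma^2)$ almost surely, and this bound passes to the expectation. Substituting into \eqref{eq:nu-def-general} gives \eqref{eq:WTCA-nu}. For $n=\tau=T$ the multiplier is $1+(\kappa_t-1)=\kappa_t\le2$, which gives the reduced form consistent with \eqref{eq:nu-def}.

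The main obstacle is justifying the ESO inequality itself in the stochastic setting, that is, that the expectation over $\sx$ of functions with block constants $L_{ti}$ preserves partial separability with degree $\kappa_t$. This holds because $C_t$ does not depend on $\sx$, so the deterministic ESO lemma applies to $\E[f_{t,\sigma}]$ directly. The remaining steps are routine norm bookkeeping.
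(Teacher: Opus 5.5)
Your proposal is correct and follows essentially the same route as the paper: the per-component log-sum-exp bound for (i); for (ii), the block-restricted Hessian domination by $\frac{\gamma^t}{\sigma}(A^{(i)})^\top A^{(i)}$ combined with Lemma~\ref{lem:Hessian-and-Lipschitz} and taking expectations; and for (iii), the ESO formula~\eqref{eq:nu-def-general} together with the row-norm/Frobenius bound $\|A^{(i)}_{\mathrm{WTCA},t,\sx_t}\|^2\le(1+\gamma^2)|\Endo_t||\action_t|$. Your extra justifications only make explicit steps the paper leaves implicit: dominated convergence for differentiating under $\E$, Jensen to pass the pointwise block-Lipschitz bound through the expectation, and noting $\kappa_t\le 2$ rather than $=2$ at the boundary stages.
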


\begin{proof}{Proof.}
Part (i) follows from the standard log-sum-exp bound (Example~4.5 in \citealp{smoothingfirstordermethods}) applied to each stage component.
Part (ii) follows from applying inequality~\eqref{eq:Hessian-bound} to $f_{\mathrm{WTCA},t,\sigma}$ with $c_t=\gamma^t$ which yields
\[
\nabla^2_\beta
f_{\mathrm{WTCA},t,\sigma}(\beta,\sx_t)
\preceq
\frac{\gamma^t}{\sigma}\,
A_{\mathrm{WTCA},t,\sx_t}^\top
A_{\mathrm{WTCA},t,\sx_t}.
\]
Restricting this bound to block $i$, taking expectation with respect to~$\mu$, and invoking Lemma~\ref{lem:Hessian-and-Lipschitz} produces $L_{ti}^{\mathrm{WTCA}}$ as stated. Summing over all component functions that involve block $i$ then yields $L_i^{\mathrm{WTCA}}$. 
We prove part (iii) by using the ESO formula~\eqref{eq:nu-def-general} applied with component indices $j = t$ and block-Lipschitz constants $L_{ti}^{\mathrm{WTCA}}$, which provides
\[
\nu_i
=
\sum_{t:\,i\in C_t}
\left(
1+\frac{(\kappa_t-1)(\tau-1)}{\max\{1,n-1\}}
\right)
L_{ti}^{\mathrm{WTCA}}.
\]
Using \eqref{eq:WTCA-Lipschitz-it}, it remains to bound 
$\E[\|A_{\mathrm{WTCA},t,\sx_t}^{(i)}\|^2]$.  For any row $a_{t,\sx_t}(u)\in\R^{BT}$, the block-$i$ component satisfies  $\|a_{t,\sx_t}^{(i)}(u)\|_2 \le \|a_{t,\sx_t}(u)\|_2$. For $t=0$, from~\eqref{eq:WTCA-row0}, the row has one nonzero block $\gamma\,\E[\phi_1(s_1)\mid s_0,a_0]$, so $\|a_{0,\sx_0}(u)\|_2^2\le\gamma^2\le 1+\gamma^2$. For $t\ge1$, from~\eqref{eq:WTCA-rows}, the row has at most two nonzero blocks: $-\phi_t(\se_t,\sx_t)$ and $\gamma\,\E[\phi_{t+1}(h(\se_t,a_t),\sx_{t+1})\mid\sx_t]$. By Jensen's inequality and Assumption~\ref{ass:normalized-basis functions}, $\|a_{t,\sx_t}(u)\|_2^2\le 1+\gamma^2$. Hence in both cases $\|a_{t,\sx_t}^{(i)}(u)\|_2^2\le 1+\gamma^2$. Applying~\eqref{eq:F-norm-bound} with $|\U_t|=|\Endo_t||\action_t|$ rows gives $\|A_{\mathrm{WTCA},t,\sx_t}^{(i)}\|^2 \le(1+\gamma^2)|\Endo_t||\action_t|$, which is deterministic and therefore equals its expectation. Substituting into $L_{ti}^{\mathrm{WTCA}}$ and then into the ESO formula yields~\eqref{eq:WTCA-nu}.
\hfill\Halmos
\end{proof}

\medskip
\paragraph{Curvature bounds for PO.}

Using the affine representation in~\eqref{eq:PO-rows}, let $A_{\mathrm{PO},\sx}$ denote the matrix whose rows are $a_{\sx}(u)^\top$, $u\in\U=\action$. For block $i$, let $A_{\mathrm{PO},\sx}^{(i)}$ denote the corresponding block-restricted submatrix formed by retaining the columns of $A_{\mathrm{PO},\sx}$ corresponding to block $\beta_{(i)}$.

\begin{lemma}
\label{lem:PO-smooth}
Fix $\sigma>0$. The smoothed PO objective $F_{\mathrm{PO},\sigma}$ is convex and continuously differentiable. Moreover:

\begin{enumerate}
\item[(i)] (\emph{Uniform approximation})
\[
0
\le
F_{\mathrm{PO},\sigma}(\beta)-F_{\mathrm{PO}}(\beta)
\le
\sigma\log(|\action|).
\]

\item[(ii)] (\emph{Block Lipschitz continuity})
For each block $i$,
\begin{equation}\label{eq:PO-Lipschitz}
L_i^{\mathrm{PO}}
:=
\frac{1}{\sigma}
\E\!\left[
\|A_{\mathrm{PO},\sx}^{(i)}\|^2
\right]
\end{equation}
is a valid block Lipschitz constant of $\nabla_\beta F_{\mathrm{PO},\sigma}$.

\item[(iii)] (\emph{ESO curvature weights})
For general block partition size $n\ge1$ and $\tau$-nice sampling, the curvature weights
\begin{equation}\label{eq:PO-nu-general}
\nu_i
\;\le\;
\left(
1+\frac{(n-1)(\tau-1)}{\max\{1,n-1\}}
\right)
\frac{4\gamma^2\,|\action|}{\sigma(1-\gamma^2)}
\;=\;
\frac{4\tau\,\gamma^2\,|\action|}{\sigma(1-\gamma^2)},
\qquad i=1,\ldots,n,
\end{equation}
satisfy the ESO condition~\eqref{eq:nu-def-general}.
For $n=\tau=T$ with block $i$ coinciding with stage $t$,
the tighter stagewise bound
\begin{equation}\label{eq:PO-nu}
\nu_i
\;\le\;
\frac{4T\,\gamma^{2i}\,|\action|}{\sigma}
\end{equation}
holds, consistent with~\eqref{eq:nu-def} of the main paper.
\end{enumerate}
\end{lemma}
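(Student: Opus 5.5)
The plan is to follow the template of the proof of Lemma~\ref{lem:WTCA-smooth}, specialized to the single-component structure of PO ($m=1$, $c_1=1$, $\U=\action$). Convexity and continuous differentiability of $F_{\mathrm{PO},\sigma}$ come from the standard log-sum-exp facts (Example~4.5 in \citealp{smoothingfirstordermethods}). For each fixed $\sx$, $f_\sigma(\cdot,\sx)$ is a log-sum-exp of affine functions, hence convex and $C^1$. Its gradient is bounded on the compact set $\Omega$ by $\max_u\|a_{\sx}(u)\|_2$, which is uniformly bounded by Assumption~\ref{ass:normalized-basis functions}. This justifies exchanging differentiation and expectation. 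Part (i) is the pathwise bound $0\le f_\sigma-f\le\sigma\log|\action|$ with $c=1$ and $|\U|=|\action|$, integrated over $\sx\sim\mu$.

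For part (ii), I apply the Hessian bound \eqref{eq:Hessian-bound} with $c=1$. This gives $\nabla^2_\beta f_\sigma(\beta,\sx)\preceq \sigma^{-1}A_{\mathrm{PO},\sx}^\top A_{\mathrm{PO},\sx}$. Restricting to the diagonal block $i$ gives the block Hessian bound $\sigma^{-1}\|A^{(i)}_{\mathrm{PO},\sx}\|^2$. Taking expectations, using $\nabla^2\E[f_\sigma]=\E[\nabla^2 f_\sigma]$ (which holds by dominated convergence, since the Hessian is bounded uniformly by the Frobenius estimate below), and invoking Lemma~\ref{lem:Hessian-and-Lipschitz} along block-$i$ perturbations yields \eqref{eq:PO-Lipschitz}.

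For part (iii), the key computation is a row-norm bound for $A_{\mathrm{PO},\sx}$ using \eqref{eq:PO-rows}. The block-$(t+1)$ entry of each row is $\gamma^{t+1}\bigl(-\phi_{t+1}(\cdot)+\E[\phi_{t+1}(\cdot)\mid\sx_t]\bigr)$. By Assumption~\ref{ass:normalized-basis functions} and Jensen's inequality, its Euclidean norm is at most $2\gamma^{t+1}$. Summing squares over blocks $1,\ldots,T-1$ bounds the squared row norm by $\sum_{t\ge0}4\gamma^{2(t+1)}\le 4\gamma^2/(1-\gamma^2)$. This also bounds the squared norm of any block restriction. Applying \eqref{eq:F-norm-bound} with $|\U|=|\action|$ rows then gives $\|A^{(i)}_{\mathrm{PO},\sx}\|^2\le 4\gamma^2|\action|/(1-\gamma^2)$ deterministically, so $L_i^{\mathrm{PO}}\le 4\gamma^2|\action|/(\sigma(1-\gamma^2))$.

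Since PO has a single component coupling all blocks, $\kappa_1=n$. The ESO weight formula \eqref{eq:nu-def-general} therefore has the multiplier $1+(n-1)(\tau-1)/\max\{1,n-1\}$. This equals $\tau$ when $n\ge2$, and equals $1=\tau$ when $n=1$ (where $\tau=1$ is forced). That yields \eqref{eq:PO-nu-general}. For the stagewise bound with $n=\tau=T$, I avoid summing the geometric series and bound only the block-$i$ columns. That block carries the factor $\gamma^{i}$ under the indexing of \eqref{eq:PO-rows}, so its squared row norm is at most $4\gamma^{2i}$. Block $0$ has a zero column block, which makes the bound trivial there. Hence $\|A^{(i)}\|^2\le4\gamma^{2i}|\action|$, and multiplying by the ESO factor $\tau=T$ gives \eqref{eq:PO-nu}.

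I expect the main obstacle to be the ESO step itself. I am taking the formula \eqref{eq:nu-def-general} as given from \citet{richtarik2014iteration,AccCD}. However, those results are stated for block-Lipschitz constants of deterministic partially separable functions. I must check that applying them to $\E_\mu[f_\sigma(\cdot,\sx)]$ is legitimate with $L_{1i}$ defined through the expected block Hessian norm. My first attempt would be to verify the ESO inequality pathwise for each $\sx$, with the pathwise constants $\sigma^{-1}\|A^{(i)}_{\mathrm{PO},\sx}\|^2$. Since the row-norm bound is deterministic, these constants are uniformly dominated by the stated ones, and the inequality can then be integrated over $\mu$. The remaining work is bookkeeping: matching the stage index $t+1$ to block $i$ and handling the $\gamma^{-1}\equiv0$ boundary convention.
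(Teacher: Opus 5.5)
Your proposal is correct and follows essentially the same route as the paper: the log-sum-exp Hessian bound with $c=1$ restricted to block $i$ for part (ii), the row estimate $\|c_{t+1}(\sx,a_t)\|_2\le 2\gamma^{t+1}$ combined with the Frobenius bound over $|\action|$ rows, the ESO multiplier reducing to $\tau$ for the single fully coupled component, and the stagewise refinement that uses only the block-$i$ columns. Your additional pathwise verification of the ESO, integrated over $\mu$ (valid because the ESO inequality is linear in the weights), soundly justifies a step the paper takes for granted.
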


\begin{proof}{Proof.}
Part (i) follows from the log-sum-exp properties stated in Example~4.5 of \citealp{smoothingfirstordermethods}. Part (ii) follows from \eqref{eq:Hessian-bound} with $c=1$ which implies
\[
\nabla^2_\beta
f_{\mathrm{PO},\sigma}(\beta,\sx)
\preceq
\frac{1}{\sigma}
A_{\mathrm{PO},\sx}^\top
A_{\mathrm{PO},\sx}.
\]
Restricting to block $i$, taking expectation over $\mu$, and applying Lemma~\ref{lem:Hessian-and-Lipschitz} yields $L_i^{\mathrm{PO}}$ as stated.
For part (iii), since PO has a single component ($m=1$) with $\kappa_1=n$, the ESO formula~\eqref{eq:nu-def-general} gives
\[
\nu_i
=
\left(
1+\frac{(n-1)(\tau-1)}{\max\{1,n-1\}}
\right)
L_i^{\mathrm{PO}}
=
\tau\,L_i^{\mathrm{PO}},
\qquad i=1,\ldots,n,
\]
for all $n\ge1$, where the last equality holds since $(1+(n-1)(\tau-1)/\max\{1,n-1\})=\tau$ for $n\ge2$ and $\nu_1=L_1^{\mathrm{PO}}=\tau L_1^{\mathrm{PO}}$ trivially for $n=\tau=1$. Since $L_i^{\mathrm{PO}}
:=
\dfrac{1}{\sigma}
\E\!\big[
\|A_{\mathrm{PO},\sx}^{(i)}\|^2\big]$ by \eqref{eq:PO-Lipschitz}, it remains to bound $\E[\|A_{\mathrm{PO},\sx}^{(i)}\|^2]$.

Since $A_{\mathrm{PO},\sx}^{(i)}$ is the column restriction of $A_{\mathrm{PO},\sx}$ to block $i$, each row satisfies $\|a_\sx^{(i)}(u)\|_2\le\|a_\sx(u)\|_2$ for every $u\in\action$. From~\eqref{eq:PO-rows} and the triangle inequality, each stagewise coefficient satisfies
$\|c_{t+1}(\sx,a_t)\|_2\le 2\gamma^{t+1}$ under Assumption~\ref{ass:normalized-basis functions}. Since the coefficient vector $a_\sx(u)$ decomposes stagewise as $a_\sx(u)=(0,c_1(\sx,a_0),\ldots,c_T(\sx,a_{T-1}))$,
\[
\|a_\sx(u)\|_2^2
=
\sum_{t=0}^{T-1}\|c_{t+1}(\sx,a_t)\|_2^2
\le
\sum_{t=0}^{T-1}4\gamma^{2(t+1)}
\le
\frac{4\gamma^2}{1-\gamma^2}.
\]
Hence $\|a_\sx^{(i)}(u)\|_2^2\le 4\gamma^2/(1-\gamma^2)$ for every block $i$ and every partition. Applying~\eqref{eq:F-norm-bound} with $|\action|$ rows gives
\[
\|A_{\mathrm{PO},\sx}^{(i)}\|^2
\le
\|A_{\mathrm{PO},\sx}^{(i)}\|_F^2
\le
\frac{4\gamma^2\,|\action|}{1-\gamma^2},
\]
and therefore $L_i^{\mathrm{PO}}\le 4\gamma^2|\action|/(\sigma(1-\gamma^2))$. Substituting into $\nu_i=\tau\,L_i^{\mathrm{PO}}$ yields~\eqref{eq:PO-nu-general}.

When $n=\tau=T$ and block $i$ coincides with stage $t$, i.e., $\beta_{(i)}=\beta_t$, the block-$i$ ($i=t$) portion of each row $a_\sx(u)$ is exactly the stagewise coefficient $c_t(\sx,a_{t-1})$ from~\eqref{eq:PO-rows}. For $t=0$, this coefficient is zero since $a_\sx(u)$ has
no $\beta_0$ component. For $t\ge1$, by the triangle inequality and Assumption~\ref{ass:normalized-basis functions},
$\|c_t(\sx,a_{t-1})\|_2\le 2\gamma^t$. Hence $\|a_\sx^{(i)}(u)\|_2^2\le 4\gamma^{2t}$ for all $u\in\action$. Applying~\eqref{eq:F-norm-bound} with $|\action|$ rows gives
\[
\|A_{\mathrm{PO},\sx}^{(i)}\|^2
\le
\|A_{\mathrm{PO},\sx}^{(i)}\|_F^2
\le
4\gamma^{2i}\,|\action|,
\]
so $L_i^{\mathrm{PO}}\le 4\gamma^{2i}|\action|/\sigma$. Substituting into $\nu_i=T\,L_i^{\mathrm{PO}}$ yields~\eqref{eq:PO-nu}.
\hfill\Halmos
\end{proof}

\section{General PS-BCD and Convergence Rate}
\label{sec:EC-PSBCD}

This section presents a generalization of the PS-BCD (GPS-BCD) algorithm of~\S\ref{sec:sbcd-algorithm} to arbitrary block partitions of $\beta\in\mathbb{R}^{BT}$ and $\tau$-nice block sampling applied to the general smoothed stochastic optimization problem~\eqref{eq:general-formulation-smooth}. We develop this general framework because stochastic block coordinate descent with $\tau$-nice sampling and ESO-based curvature weights may be of independent interest to the optimization community beyond the MDP setting studied here.
The stagewise PS-BCD algorithm of~\S\ref{sec:sbcd-algorithm} and the classical stochastic gradient descent (SGD) method are
recovered as special cases corresponding to $n=\tau=T$ and $n=\tau=1$, respectively. We then establish the convergence rate of this general algorithm and derive Theorem~\ref{thm:psbcd-convergencerate} as a consequence.

Let the coefficient vector be partitioned into $n$ blocks
\[
\beta=(\beta_{(1)},\ldots,\beta_{(n)}),
\qquad
\beta_{(i)}\in\mathbb{R}^{n_i},
\qquad
\sum_{i=1}^n n_i = BT.
\]
At each iteration, GPS-BCD selects a random subset $S_k \subseteq \{1,\ldots,n\}$ and updates only those blocks, keeping the remaining blocks fixed. The subset $S_k$ follows the \emph{$\tau$-nice sampling} scheme of~\citet{richtarik2014iteration}, in which every subset of size $\tau$ is selected with equal probability, so $\E_{\bar{\mathcal S}}[|S_k|]=\tau$. At each iteration, $S_k$ and the exogenous trajectory sample $\sx^k$ are drawn independently from $\xi$.

For a given iterate $\beta$ and exogenous sample $\sx$, the stochastic gradient of the smoothed objective $F_\sigma(\beta) = \E_\xi[\sum_{j=1}^m f_{j,\sigma}(\beta,\sx)]$ is
\begin{equation}\label{eq:stochastic-gradient}
G(\beta,\sx)
:=
\sum_{j=1}^m \nabla_\beta f_{j,\sigma}(\beta,\sx)
=
\sum_{j=1}^m \sum_{u\in\U_j}
p_{j,u}(\beta,\sx)\,a_{j,\sx}(u),
\end{equation}
where $p_{j,u}(\beta,\sx)$ is the  weight defined in~\eqref{eq:p}. This is an unbiased estimator of $\nabla_\beta F_\sigma(\beta)$, since the bounded gradients of $f_{j,\sigma}$ on $\Omega$ justify interchanging differentiation and expectation.

The block-$i$ component of $G(\beta,\sx)$ is
\begin{equation}\label{eq:stochastic-gradient-block}
G_{(i)}(\beta,\sx)
:=
\sum_{j:\,i\in\mathcal{I}_j}
\sum_{u\in\U_j}
p_{j,u}(\beta,\sx)\,a_{j,\sx}^{(i)}(u),
\end{equation}
where $a_{j,\sx}^{(i)}(u)$ denotes the block-$i$ portion of the affine coefficient vector $a_{j,\sx}(u)$, and the outer sum runs only over components $j$ whose index set $\mathcal{I}_j$ contains block $i$. Note that $G_{(i)}$ is the $i$-th block of $G$, i.e., $G_{(i)}(\beta,\sx) = P_i G(\beta,\sx)$. Each active block $i\in S_k$ is then updated by 
\begin{equation}\label{eq:beta-update-general}
\beta_{(i)}^{k+1}
=
\arg\min_{z\in\Omega_i}
\left\{
\langle G_{(i)}(\beta^k,\sx^k), z-\beta_{(i)}^k\rangle
+
\frac{\nu_i}{2\alpha_k}\|z-\beta_{(i)}^k\|_i^2
\right\},
\qquad i\in S_k,
\end{equation}
while $\beta_{(i)}^{k+1}=\beta_{(i)}^k$ for $i\notin S_k$. Here $\nu_i$ denotes the curvature weight in~\eqref{eq:nu-def-general}, $\alpha_k$ is the stepsize schedule, and $\Omega_i$ is the projection of the feasible set onto block $i$. This update is the general block-coordinate version  of~\eqref{eq:parallel-block-update}.

Algorithm~\ref{alg:psbcd-general} summarizes the general PS-BCD scheme.

\begin{algorithm}[H]
\caption{General Parallel Stochastic Block Coordinate Descent (GPS-BCD)}
\label{alg:psbcd-general}
\begin{algorithmic}[1]
\Require Initial point $\beta^1\in\Omega$, stepsize schedule
$\alpha_k = 1/(2\sqrt{k+1})$ starting from $\alpha_1 = 1/2\sqrt{2}$, and curvature weights $\{\nu_i\}_{i=1}^n$
\For{$k=1,2,\ldots, K$}
\State Sample block subset $S_k$ from $\bar{\mathcal S}$ with
  $\E[|S_k|]=\tau$
\State Sample exogenous trajectory $\sx^k$ from $\xi$
  independently of $S_k$
\For{each block $i\in S_k$ \textbf{in parallel}}
\State Compute block stochastic gradient
          $G_{(i)}(\beta^k,\sx^k)$ via~\eqref{eq:stochastic-gradient-block}
\State Update $\beta_{(i)}^{k+1}$ via \eqref{eq:beta-update-general}
\EndFor
\For{each block $i\notin S_k$}
\State $\beta_{(i)}^{k+1}=\beta_{(i)}^k$
\EndFor
\EndFor
\State \textbf{Output}: $\bar\beta^K = \frac{\sum_{k=2}^{K}\vartheta_{k}\beta^k} {\sum_{k=1}^{K-1}\vartheta_k},$ where $\vartheta_k := n^2\alpha_{k-1} -(n^2 -\tau^2)\alpha_k$. 
\end{algorithmic}
\end{algorithm}

The special case analyzed in the main paper corresponds to the
stagewise partition with $n=T$ and $\tau=T$, where all stage
blocks are updated simultaneously.

\medskip

\begin{theorem}\label{thm:psbcd-general}
Suppose the stochastic gradient $G(\beta,\sx)$
in~\eqref{eq:stochastic-gradient} is an unbiased estimator
of $\nabla_\beta F_\sigma(\beta)$ and satisfies the bounded
variance condition
\[
\E_\xi\!\left[
\left\|
G(\beta,\sx) - \nabla_\beta F_\sigma(\beta)
\right\|_{\nu,*}^2
\;\middle|\;
\beta
\right]
\le \chi^2
\qquad \text{for all } \beta\in\Omega.
\]
Then the averaged iterate $\bar\beta^K$ produced by
Algorithm~\ref{alg:psbcd-general} satisfies
\begin{equation}\label{eq:psbcd-general-bound}
\E_{\bar{\mathcal{S}}\times\xi}[F_\sigma(\bar\beta^K)]-F_\sigma(\beta^*)
\le
\mathcal O\left(\frac{
(n^2-\tau^2)(F_\sigma(\beta^1)-F_\sigma(\beta^*))
+
n\tau \|\beta^1-\beta^*\|^2_\nu
+
\tau^2\log(K+1)\chi^2
}{
\tau^2(\sqrt{K+2}-2)
}\right).
\end{equation}
where $\|\cdot\|_\nu$ is the weighted norm defined in \eqref{eq:nu-def-general}.
\end{theorem}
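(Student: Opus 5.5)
We follow the standard template for stochastic block coordinate descent with $\tau$-nice sampling (as in Richt\'arik--Tak\'a\v{c} and the stochastic mirror-descent BCD analysis of Dang--Lan), using the ESO inequality~\eqref{eq:ESO} in place of a global smoothness bound and the weighted distance $D(u,v)=\tfrac12\|u-v\|_\nu^2$ as the potential.

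\textbf{Step 1 (one-step descent from ESO).} Define the full tentative update $\hat\beta^{k+1}$ in which every block is updated by~\eqref{eq:beta-update-general}, and set $d^k:=\hat\beta^{k+1}-\beta^k$. Then $\beta^{k+1}=\beta^k+d^k_{[S_k]}$, and $S_k$ is independent of $\sx^k$. Conditioning on $(\beta^k,\sx^k)$ and applying~\eqref{eq:ESO} (with $\alpha_k\le1$) gives
\[
\E_{S_k}\bigl[F_\sigma(\beta^{k+1})\bigr]\le F_\sigma(\beta^k)+\tfrac{\tau}{n}\Bigl(\langle\nabla F_\sigma(\beta^k),d^k\rangle+\tfrac{1}{2}\|d^k\|_\nu^2\Bigr).
\]
Write $\nabla F_\sigma=G-\delta^k$, where $\delta^k:=G(\beta^k,\sx^k)-\nabla F_\sigma(\beta^k)$. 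The three-point (prox) inequality for the weighted-Euclidean projection, applied blockwise and then summed, yields for any $z\in\Omega$
\[
\langle G,\hat\beta^{k+1}-z\rangle\le \tfrac{1}{\alpha_k}\bigl(D(\beta^k,z)-D(\hat\beta^{k+1},z)-D(\beta^k,\hat\beta^{k+1})\bigr).
\]
The noise cross term is handled with Young's inequality in the dual pair $\|\cdot\|_\nu,\|\cdot\|_{\nu,*}$:
\[
\langle\delta^k,d^k\rangle\le \tfrac{\alpha_k}{2(1-\alpha_k)}\|\delta^k\|_{\nu,*}^2+\tfrac{1-\alpha_k}{2\alpha_k}\|d^k\|_\nu^2,
\]
so that $\|d^k\|_\nu^2$ is absorbed because $\alpha_k\le 1/2$.

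\textbf{Step 2 (relating the distance to the sampled iterate).} Because only blocks in $S_k$ move, with each block selected with probability $\tau/n$, we have
\[
\E_{S_k}\bigl[D(\beta^{k+1},z)\bigr]=\bigl(1-\tfrac{\tau}{n}\bigr)D(\beta^k,z)+\tfrac{\tau}{n}D(\hat\beta^{k+1},z).
\]
This identity lets us replace $D(\hat\beta^{k+1},\beta^*)$ by $\tfrac{n}{\tau}\E D(\beta^{k+1},\beta^*)-(\tfrac{n}{\tau}-1)D(\beta^k,\beta^*)$. Combining Steps 1 and 2 with $z=\beta^*$, and using convexity in the form $\langle\nabla F_\sigma(\beta^k),\beta^k-\beta^*\rangle\ge F_\sigma(\beta^k)-F_\sigma(\beta^*)$, gives a per-iteration recursion of the form
\[
\tfrac{n}{\tau}\alpha_k\E\bigl[F_\sigma(\beta^{k+1})-F^*\bigr]\le \bigl(\tfrac{n}{\tau}-1\bigr)\alpha_k\bigl(F_\sigma(\beta^k)-F^*\bigr)+\tfrac{n}{\tau}D(\beta^k,\beta^*)-\tfrac{n}{\tau}\E D(\beta^{k+1},\beta^*)+C\alpha_k^2\chi^2.
\]
The noise term uses unbiasedness, $\E[\langle\delta^k,\beta^k-\beta^*\rangle]=0$, together with the bounded-variance hypothesis. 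The exact constants, and the powers of $n$ and $\tau$ that appear, will be tracked so that multiplying through by $n/\tau$ reproduces the weights $\vartheta_k=n^2\alpha_{k-1}-(n^2-\tau^2)\alpha_k$.

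\textbf{Step 3 (telescoping and averaging).} Sum the recursion over $k=1,\dots,K-1$. The function-value terms telescope with coefficients $n^2\alpha_{k-1}-(n^2-\tau^2)\alpha_k=\vartheta_k$ multiplying $F_\sigma(\beta^k)-F^*$. These are nonnegative because $\alpha_k$ is nonincreasing, and $\vartheta_k\ge\tau^2\alpha_k$. The leftover boundary term $(n^2-\tau^2)\alpha_1(F_\sigma(\beta^1)-F^*)$ produces the first term in the numerator of~\eqref{eq:psbcd-general-bound}. The distance terms telescope to $n\tau\, D(\beta^1,\beta^*)$, and the noise contributes $\tau^2\chi^2\sum_k\alpha_k^2\asymp\tau^2\chi^2\log(K+1)$. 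Jensen's inequality applied to $\bar\beta^K$, whose weights are proportional to $\vartheta_k$, together with the lower bound $\sum_{k}\vartheta_k\gtrsim\tau^2\sum_k\alpha_k\ge\tau^2(\sqrt{K+2}-2)$ obtained by an integral comparison for $\alpha_k=1/(2\sqrt{k+1})$, then gives~\eqref{eq:psbcd-general-bound}.

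\textbf{Main obstacle.} The delicate step is Step 1 combined with Step 2. The ESO bounds the function value at the randomly sampled point, but the prox inequality is stated for the full tentative point $\hat\beta^{k+1}$. Aligning these two, while keeping a usable negative $\|d^k\|_\nu^2$ term to absorb the noise, is where the mismatch factor $n^2-\tau^2$ arises. We would first try the Lu--Xiao style trick of adding and subtracting $(1-\tau/n)F_\sigma(\beta^k)$, and we would verify that the coefficient weighting by $\alpha_{k-1}$ versus $\alpha_k$ matches $\vartheta_k$.
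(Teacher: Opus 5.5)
For $n=\tau$, which covers Theorem~\ref{thm:psbcd-convergencerate} and the SGD regime, your argument is the paper's proof. It uses the same ingredients: ESO on the full tentative step, the blockwise three-point inequality, Young's inequality absorbed via $\alpha_k\le 1/2$, $\Pr(i\in S_k)=\tau/n$ for the distance and variance terms, and telescoping with $\vartheta_k=n^2\alpha_{k-1}$.

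\textbf{The gap for $\tau<n$.} Step~3 does not follow from Step~2. Your Step~2 recursion is the correct consequence of ESO, the prox inequality, and $\E_{S_k}[D(\beta^{k+1},\beta^*)]=(1-\frac{\tau}{n})D(\beta^k,\beta^*)+\frac{\tau}{n}D(\hat\beta^{k+1},\beta^*)$. Multiplied by $\tau^2$, with $F^*:=F_\sigma(\beta^*)$, it reads
\[
n\tau\alpha_k\,\E[F_\sigma(\beta^{k+1})-F^*]+n\tau\,\E[D(\beta^{k+1},\beta^*)]
\le \tau(n-\tau)\alpha_k\,(F_\sigma(\beta^k)-F^*)+n\tau\,D(\beta^k,\beta^*)+\tau^2\alpha_k^2\chi^2 .
\]
Telescoping therefore puts the coefficient $\tilde\vartheta_k:=n\tau\alpha_{k-1}-\tau(n-\tau)\alpha_k$ on $F_\sigma(\beta^k)-F^*$, not $\vartheta_k$.
\begin{itemize}
\item The ratio of the $\alpha_{k-1}$- and $\alpha_k$-coefficients is $n/(n-\tau)$ in $\tilde\vartheta_k$ but $n^2/(n^2-\tau^2)$ in $\vartheta_k$. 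No bookkeeping of constants turns one into the other.
\item Moreover $\vartheta_k-\tilde\vartheta_k=n(n-\tau)(\alpha_{k-1}-\alpha_k)\ge 0$. So your bound on $\sum_k\tilde\vartheta_k\E[F_\sigma(\beta^k)-F^*]$ does not control the $\vartheta_k$-weighted output of Algorithm~\ref{alg:psbcd-general} with the stated constants.
\item Going through $\vartheta_k\le (n/\tau)\tilde\vartheta_k$ instead gives $n^2\|\beta^1-\beta^*\|_\nu^2$ and $n\tau\log(K+1)\chi^2$ in the numerator. That is a factor $n/\tau$ worse than claimed.
\end{itemize}
What your recursion does prove is the stated bound for the average weighted by $\tilde\vartheta_k$, using $\tilde\vartheta_k\ge\tau^2\alpha_k$. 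In that version the first term even improves from $(n^2-\tau^2)$ to $\tau(n-\tau)$.

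\textbf{Where the paper's $\vartheta_k$ comes from.} The paper's one-step inequality has $(1-\tau^2/n^2)F_\sigma(\beta^k)$, $\frac{\tau}{n\alpha_k}D$, and $\frac{\tau^2}{n^2}\alpha_k\chi^2$. It obtains this by multiplying the conditional expectation of the right-hand side of the sampled prox inequality by an additional $\tau/n$ before inserting it into the ESO bracket. But the expectation of that inequality's left-hand side, written in $d^k=\tilde d_{[S_k]}$, already equals $\frac{\tau}{n}$ times the ESO bracket, written in $\tilde d$. So the extra factor is not a consequence of ESO and the prox inequality alone.

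It is equivalent to averaging your recursion (weight $\tau/n$) with the expected-descent inequality $\E[F_\sigma(\beta^{k+1})\mid\beta^k]\le F_\sigma(\beta^k)$ (weight $1-\tau/n$). That descent step is the ingredient you would need to obtain the $\vartheta_k$ weights.
\begin{itemize}
\item Without noise it holds exactly.
\item For the unconstrained stochastic step one only gets $\E[F_\sigma(\beta^{k+1})\mid\beta^k]\le F_\sigma(\beta^k)+\frac{\tau}{n}\frac{\alpha_k^2}{2}\chi^2$.
\item This adds a term of order $\tau(n-\tau)\chi^2\sum_k\alpha_k^3=\mathcal O(n\tau\chi^2)$ to the numerator.
\end{itemize}
So for $\tau<n$ you must either switch the averaging weights to $\tilde\vartheta_k$, or add this descent inequality and carry the extra term. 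The planned check that your weighting ``matches $\vartheta_k$'' cannot succeed as stated.
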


\begin{proof}{Proof.}
Consider iteration $k$ and define the update increment $d^k := \beta^{k+1}-\beta^k$. By construction, $d^k_{(i)}=0$ for all $i\notin S_k$, so $d^k=d^k_{[S_k]}$.
To apply the ESO to a fixed direction, define for each block $i$ the deterministic candidate update
\[
\tilde d_{(i)}
:=
\arg\min_{z-\beta_{(i)}^k\in\R^{n_i}}
\left\{
\langle G_{(i)}(\beta^k,\sx^k), z-\beta_{(i)}^k\rangle
+
\frac{\nu_i}{2\alpha_k}\|z-\beta_{(i)}^k\|_i^2
\right\},
\]
so that $\tilde{d}_{(i)}$ is fixed given $\sx^k$.
Since $\Omega$ is chosen large enough to contain the unconstrained minimizers in its interior, the constrained and unconstrained updates coincide, giving $d^k = \tilde{d}_{[S_k]}$.
Applying the ESO \eqref{eq:ESO:block-lipschitz} to the
fixed direction $\tilde d$ gives
\begin{align}
\E_{\bar{\mathcal{S}}}\!\left[F_\sigma(\beta^k+d^k)\right]
&\le
F_\sigma(\beta^k)
+\frac{\tau}{n}
\Big[
\langle \nabla_\beta F_\sigma(\beta^k), \tilde d\rangle
+\tfrac{1}{2}\|\tilde d\|_\nu^2
\Big]
\nonumber\\
&=
F_\sigma(\beta^k)
+\frac{\tau}{n}
\Big[
\langle G(\beta^k,\sx^k), \tilde d\rangle
-
\langle \delta^k, \tilde d\rangle
+\tfrac{1}{2}\left\|\tilde d\right\|_\nu^2
\Big],
\label{eq:ESO-ineq-updated}
\end{align}
where $\delta^k := G(\beta^k,\sx^k) - \nabla_\beta F_\sigma(\beta^k)$ is the stochastic noise, satisfying $\E[\delta^k \mid \beta^k]=0$ by our unbiased stochastic gradient assumption. 

From the first-order optimality of subproblem~\eqref{eq:beta-update-general}, for each $i\in S_k$ and any optimal solution $\beta^*$, the standard three-point proximal inequality gives
\begin{align}
\left\langle G_{(i)}(\beta^k,\sx^k),
\beta_{(i)}^{k+1}-\beta_{(i)}^k\right\rangle
+\frac{\nu_i}{2\alpha_k}
\left\|\beta_{(i)}^{k+1}-\beta_{(i)}^k\right\|_i^2
&\le
\left\langle G_{(i)}(\beta^k,\sx^k),
\beta_{(i)}^*-\beta_{(i)}^k\right\rangle
\nonumber\\
&\quad
+\frac{\nu_i}{2\alpha_k}
\Big(
\left\|\beta_{(i)}^*-\beta_{(i)}^k\right\|_i^2
-
\left\|\beta_{(i)}^*-\beta_{(i)}^{k+1}\right\|_i^2
\Big).
\label{eq:opt-ineq-updated}
\end{align}
See, e.g., Lemma~1 of \citet{lan2012optimal} for a detailed
derivation.

Adding
$\tfrac{1}{2}\nu_i\|\beta_{(i)}^{k+1}-\beta_{(i)}^k\|_i^2
-
\langle \delta_{(i)}^k,\beta_{(i)}^{k+1}-\beta_{(i)}^k\rangle$
to both sides of \eqref{eq:opt-ineq-updated} and applying the
Young inequality
\[
-\langle \delta^k_{(i)}, \beta_{(i)}^{k+1}-\beta_{(i)}^k\rangle
-
\frac{c}{2}\|\beta_{(i)}^{k+1}-\beta_{(i)}^k\|_i^2
\le
\frac{1}{2c}\| \delta^k_{(i)}\|_{i,*}^2,
\qquad c>0,
\]
with $c=\nu_i(\dfrac{1}{\alpha_k}-1)>0$ yields
\[
-\frac{\nu_i}{2\alpha_k}\|\beta_{(i)}^{k+1}-\beta_{(i)}^k\|_i^2
+\frac{\nu_i}{2}\|\beta_{(i)}^{k+1}-\beta_{(i)}^k\|_i^2
-\langle\delta_{(i)}^k,\beta_{(i)}^{k+1}-\beta_{(i)}^k\rangle
\le
\frac{\alpha_k}{2\nu_i(1-\alpha_k)}\left\|\delta_{(i)}^k\right\|_{i,*}^2
\le
\frac{\alpha_k}{\nu_i}\,\left\|\delta_{(i)}^k\right\|_{i,*}^2,
\]
where the last inequality uses $\alpha_k\le \dfrac12$ for all $k\ge1$ since $\alpha_k = 1/(2\sqrt{k+1})$.
Multiplying \eqref{eq:opt-ineq-updated} by
$\mathbbm{1}_{\{i\in S_k\}}$ and summing over $i$ gives
\begin{align}
\langle G(\beta^k,\sx^k),d^k\rangle
+\tfrac{1}{2}\|d^k\|_\nu^2
-\langle\delta^k,d^k\rangle
&\le
\sum_{i=1}^n
\mathbbm{1}_{\{i\in S_k\}}
\langle G_{(i)}(\beta^k,\sx^k),
\beta_{(i)}^*-\beta_{(i)}^k\rangle
\nonumber\\
&\quad
+\frac{1}{2\alpha_k}
\sum_{i=1}^n
\mathbbm{1}_{\{i\in S_k\}}
\nu_i
\Big(
\|\beta_{(i)}^*-\beta_{(i)}^k\|_i^2
-
\|\beta_{(i)}^*-\beta_{(i)}^{k+1}\|_i^2
\Big)
\nonumber\\
&\quad
+\sum_{i=1}^n
\mathbbm{1}_{\{i\in S_k\}}
\alpha_k\nu_i^{-1}\|\delta_{(i)}^k\|_{i,*}^2.
\label{eq:block-sum-updated}
\end{align}

We now take conditional expectations of each term on the
right-hand side of~\eqref{eq:block-sum-updated} over
$\bar{\mathcal{S}}\times\xi$ given $\beta^k$,
then multiply by $\dfrac{\tau}{n}$ to combine
with~\eqref{eq:ESO-ineq-updated}.

\emph{Gradient term.}
Using $\E[\delta^k\mid\beta^k]=0$ and $\Pr(i\in S_k)=\tau/n$,
\begin{align}
\E_{\bar{\mathcal{S}}\times\xi}
\!\Big[
\sum_{i\in S_k}
\langle G_{(i)}(\beta^k,\sx^k),
\beta_{(i)}^*-\beta_{(i)}^k\rangle
\ \Big|\ \beta^k
\Big]
&=
\frac{\tau}{n}
\langle \nabla_\beta F_\sigma(\beta^k),
\beta^*-\beta^k\rangle
\le
\frac{\tau}{n}
\big(
F_\sigma(\beta^*)
-
F_\sigma(\beta^k)
\big),
\label{eq:convexity-updated}
\end{align}
where the inequality uses convexity of $F_\sigma$. After multiplying by $\dfrac{\tau}{n}$, this contributes $\dfrac{\tau^2}{n^2}(F_\sigma(\beta^*)-F_\sigma(\beta^k))$. \looseness=-1

\emph{Distance term.}
For each $i$, block $i$ is updated only when $i\in S_k$,
and when $i\notin S_k$ we have
$\beta_{(i)}^{k+1}=\beta_{(i)}^k$. Therefore,
$\E[\|\beta_{(i)}^*-\beta_{(i)}^{k+1}\|_i^2]
=
\dfrac{\tau}{n}\E[\|\beta_{(i)}^*-\beta_{(i)}^{k+1}\|_i^2\mid i\in S_k]
+
(1-\dfrac{\tau}{n})\|\beta_{(i)}^*-\beta_{(i)}^k\|_i^2,$
so
$\E[\mathbbm{1}_{\{i\in S_k\}}(\|\beta_{(i)}^*-\beta_{(i)}^k\|_i^2
-\|\beta_{(i)}^*-\beta_{(i)}^{k+1}\|_i^2)]
=
\|\beta_{(i)}^*-\beta_{(i)}^k\|_i^2
-\E[\|\beta_{(i)}^*-\beta_{(i)}^{k+1}\|_i^2].$
Summing over $i$ with weights $\nu_i$ gives
\begin{align}
\E_{\bar{\mathcal{S}}\times\xi}
\!\Big[
\frac{1}{2\alpha_k}
\sum_{i\in S_k}
\nu_i
\big(
\|\beta_{(i)}^*-\beta_{(i)}^k\|_i^2
-
\|\beta_{(i)}^*-\beta_{(i)}^{k+1}\|_i^2
\big)
\ \Big|\ \beta^k
\Big]
&=
\frac{1}{\alpha_k}
\Big(
D(\beta^*,\beta^k)
-
\E_{\bar{\mathcal{S}}\times\xi}\!\left[D(\beta^*,\beta^{k+1})
\mid\beta^k\right]
\Big).
\label{eq:distance-updated}
\end{align}
After multiplying by $\dfrac{\tau}{n}$, this contributes
$\dfrac{\tau}{n\alpha_k}(D(\beta^*,\beta^k)
-\E[D(\beta^*,\beta^{k+1})\mid\beta^k])$.

\emph{Variance term.}
Since $S_k$ is drawn independently of $\sx^k$,
conditioning on $(\beta^k,\sx^k)$ gives
\[
\E_{\bar{\mathcal{S}}}\!\left[
\sum_{i\in S_k}\nu_i^{-1}\|\delta_{(i)}^k\|_{i,*}^2
\,\middle|\, \beta^k,\sx^k\right]
=
\sum_{i=1}^n \Pr(i\in S_k)\,\nu_i^{-1}\|\delta_{(i)}^k\|_{i,*}^2
=
\frac{\tau}{n}\,\|\delta^k\|_{\nu,*}^2.
\]
Taking $\E[\cdot\mid\beta^k]$ and applying
our bounded variance condition:
\begin{equation}
\label{eq:variance-updated}
\E_{\bar{\mathcal{S}}\times\xi}\!\left[
\sum_{i\in S_k}\nu_i^{-1}\|\delta_{(i)}^k\|_{i,*}^2
\,\middle|\, \beta^k\right]
= \frac{\tau}{n}\,\E\!\left[\|\delta^k\|_{\nu,*}^2 \mid \beta^k\right]
\le
\frac{\tau}{n}\,\chi^2.
\end{equation}
After multiplying by $\dfrac{\tau}{n}$, this contributes
$\dfrac{\tau^2\alpha_k}{n^2}\chi^2$.

Substituting the three expectations
into~\eqref{eq:block-sum-updated},
multiplying through by $\dfrac{\tau}{n}$, and
adding $F_\sigma(\beta^k)$ via~\eqref{eq:ESO-ineq-updated} yields
the one-step inequality
\begin{align}
\E_{\bar{\mathcal{S}}\times\xi}\!\left[F_\sigma(\beta^{k+1})
\mid \beta^k\right]
+\frac{\tau}{n\alpha_k}
\E_{\bar{\mathcal{S}}\times\xi}\!\left[D(\beta^*,\beta^{k+1})
\mid \beta^k\right]
&\le
\left(1-\frac{\tau^2}{n^2}\right)
F_\sigma(\beta^k)
+
\frac{\tau^2}{n^2}
F_\sigma(\beta^*)
\nonumber\\
&\quad
+
\frac{\tau}{n\alpha_k}
D(\beta^*,\beta^k)
+
\frac{\tau^2\alpha_k}{n^2}
\chi^2.
\label{eq:step-improve-updated}
\end{align}

Taking full expectations and summing~\eqref{eq:step-improve-updated}
for $k=1,\ldots,K$ with $\alpha_k=1/(2\sqrt{k+1})$ gives
\begin{align*}
&\sum_{k=2}^K
\vartheta_k
\E_{\bar{\mathcal{S}}\times\xi}[F_\sigma(\beta^k)-F_\sigma(\beta^*)]
+
n^2\alpha_K
\E_{\bar{\mathcal{S}}\times\xi}[F_\sigma(\beta^{K+1})-F_\sigma(\beta^*)]
+
n\tau
\E_{\bar{\mathcal{S}}\times\xi}[D(\beta^{K+1},\beta^*)]
\\
&\le
(n^2-\tau^2)\alpha_1
(F_\sigma(\beta^1)-F_\sigma(\beta^*))
+
n\tau D(\beta^1,\beta^*)
+
\tau^2
\left(
\sum_{k=1}^K\alpha_k^2
\right)
\chi^2,
\end{align*}
where
$\vartheta_k := n^2\alpha_{k-1} - (n^2-\tau^2)\alpha_k.$
Since $\alpha_k$ is nonincreasing, $\vartheta_k\ge\tau^2\alpha_k$.
Using
\[
\sum_{k=2}^K\vartheta_k
\ge
\tau^2(\sqrt{K+2}-2),
\qquad
\sum_{k=1}^K\alpha_k^2
\le
\frac{1}{4}\log\!\left(K+1\right),
\]
and dropping nonnegative terms on the left yields
\[
\E_{\bar{\mathcal{S}}\times\xi}[F_\sigma(\bar\beta^K)]-F_\sigma(\beta^*)
\le
\frac{
(n^2-\tau^2)\alpha_1(F_\sigma(\beta^1)-F_\sigma(\beta^*))
+
n\tau D(\beta^1,\beta^*)
+
\frac{1}{4}\tau^2\log(K+1)\chi^2
}{
\tau^2(\sqrt{K+2}-2)
}.
\]
Using $D(u,v)=\tfrac{1}{2}\|u-v\|_\nu^2$ and
$\alpha_1 = 1/(2\sqrt{2})$ yields the stated bound.\hfill\Halmos
\end{proof}

\begin{proof}{Proof of Theorem~\ref{thm:psbcd-convergencerate}.}
The result follows directly from Theorem~\ref{thm:psbcd-general}
by specializing to the stagewise partition with
$n=\tau=T$, under which all stage blocks are updated simultaneously
at every iteration.
\hfill\Halmos
\end{proof}

\section{Total Computational Complexity}
\label{sec:EC-complexity}

This section proves Proposition~\ref{prop:complexity-comparison} by
translating the convergence guarantee of
Theorem~\ref{thm:psbcd-convergencerate} into explicit total
computational complexity bounds.
Our analysis is based on the general PS-BCD framework described
in~\S\ref{sec:EC-PSBCD}, where the coefficient vector
$\beta\in\mathbb{R}^{BT}$ is partitioned into $n$ blocks and, at each
iteration, a random subset of $\tau$ blocks is updated.
The two algorithms compared in Proposition~\ref{prop:complexity-comparison}
correspond to two special cases of this general scheme.
When $n=\tau=1$, the entire coefficient vector is treated as a single block
and updated jointly, recovering the classical stochastic gradient descent
(SGD).
When $n=\tau=T$, the blocks correspond to stagewise coefficient vectors
$\beta_t$ and all $T$ blocks are updated simultaneously, yielding
Algorithm~\ref{alg:SBCD} (PS-BCD).
We therefore analyze the total computational complexity under these two
update regimes.

Before deriving the total computational costs, we establish a bound on
the stochastic term $\chi^2$ appearing in the convergence rate.
The following lemma provides a bound for both WTCA and PO that is
independent of the horizon length $T$, ensuring that any $T$-dependence
in the iteration complexity arises solely from temporal coupling in the
objective rather than from stochastic sampling variability.

\begin{lemma}
\label{lem:dual-noise-weighted}
Let $F_\sigma$ denote the smoothed function of the form
\[
F_\sigma(\beta)
=
\E_\xi\!\left[
\sum_{j=1}^m f_{j,\sigma}(\beta,\sx)
\right],
\]
where each component $f_{j,\sigma}$ is a log-sum-exp of affine functions:
\begin{equation}\label{eq:log-sum-exp-form}
f_{j,\sigma}(\beta,\sx)
=
c_j\,\sigma
\log\!\sum_{u\in\U_j}
\exp\!\left(
\frac{a_{j,\sx}(u)^\top\beta + b_{j,\sx}(u)}{\sigma}
\right).
\end{equation}
The exact forms for WTCA and PO are given
in~\S\ref{sec:seqBCD}.
Let $\beta=(\beta_{(1)},\ldots,\beta_{(n)})$ be a block partition.
Let $L_{ji}$ denote the block-$i$ Lipschitz constant of the deterministic
function $\beta \mapsto \E[f_{j,\sigma}(\beta,\sx)]$.
For each block $i$, define
$J_i
:=
\bigl\{j : \text{component } f_{j,\sigma} \text{ depends on block } i\bigr\},$
and 
$U_i := \sum_{j\in J_i} c_j.$
Let $\chi^2 := \sup_{\beta\in\Omega}\E_\xi\!\left[\|\delta\|_{\nu,*}^2\mid\beta\right]$,
where $\delta := G(\beta,\sx) - \nabla_\beta F_\sigma(\beta)$.
Then
\begin{equation}
\label{eq:dual-noise-weighted-bound}
\chi^2
\le
\sigma
\sum_{i=1}^n
\nu_i^{-1}\,U_i\sum_{j\in J_i} L_{ji}.
\end{equation}
\end{lemma}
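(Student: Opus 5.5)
We bound the weighted dual-norm noise blockwise. Since $\|\delta\|_{\nu,*}^2=\sum_{i=1}^n\nu_i^{-1}\|\delta_{(i)}\|_2^2$ and the variance is at most the second moment about any constant, it suffices to show, for each block $i$ and each $\beta\in\Omega$,
\[
\E_\xi\!\left[\|\delta_{(i)}\|_2^2\mid\beta\right]\le \sigma\,U_i\sum_{j\in J_i}L_{ji}.
\]
Multiplying by $\nu_i^{-1}$ and summing over $i$ then gives~\eqref{eq:dual-noise-weighted-bound}. Here $\delta_{(i)}=\sum_{j\in J_i}\big(g_{ji}(\sx)-\E_\xi[g_{ji}]\big)$ with $g_{ji}(\sx):=\nabla_{\beta_{(i)}}f_{j,\sigma}(\beta,\sx)=c_j\sum_{u}p_{j,u}a^{(i)}_{j,\sx}(u)$.

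First we decouple the components with a weighted Cauchy--Schwarz inequality using weights $c_j$:
\[
\Big\|\sum_{j\in J_i}X_j\Big\|_2^2\le\Big(\sum_{j\in J_i}c_j\Big)\sum_{j\in J_i}c_j^{-1}\|X_j\|_2^2=U_i\sum_{j\in J_i}c_j^{-1}\|X_j\|_2^2 .
\]
This produces the factor $U_i$. It then remains to show $c_j^{-1}\E\|g_{ji}-\E g_{ji}\|_2^2\le\sigma L_{ji}$ for each $j$, which we do in two steps.

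\emph{Pointwise bound.} We bound $c_j^{-1}\|g_{ji}(\sx)\|_2^2$. Write $g_{ji}=c_jA^{(i)\top}_{j,\sx}p$. Then $\|g_{ji}\|_2^2\le c_j^2\sum_u p_u\|a^{(i)}_{j,\sx}(u)\|_2^2$ by Jensen's inequality, since $p$ is a probability vector. This is at most $c_j^2\|A^{(i)}_{j,\sx}\|_F^2$.

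\emph{Linking to the Lipschitz constant.} We relate this to $L_{ji}$ through the Hessian representation~\eqref{eq:hessian}. Following the constructions in Lemmas~\ref{lem:WTCA-smooth} and~\ref{lem:PO-smooth}, $L_{ji}$ is taken as $(c_j/\sigma)\E[\|A^{(i)}_{j,\sx}\|^2]$, with the row-norm Frobenius bound used there. This gives $c_j^{-1}\E\|g_{ji}\|_2^2\le c_j\E\|A^{(i)}\|^2_{(F)}=\sigma L_{ji}$. Subtracting the mean only lowers the second moment, which yields the per-component inequality.

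The main obstacle is precisely this identification. The Lipschitz constant is a curvature quantity of order $\|A\|^2/\sigma$, while the gradient norm is of order $\|A\|$, so the bound holds only when $L_{ji}$ is the specific Frobenius/row-norm constant of Lemmas~\ref{lem:WTCA-smooth}--\ref{lem:PO-smooth}, not an arbitrary valid Lipschitz constant. I would make this explicit. I would first try the spectral-norm route, which needs $\sum_u p_u\|a_u\|^2\le\|A\|^2$ and fails in general. I would therefore fall back on the Frobenius constants actually used in the paper, which match the $\sigma L_{ji}$ form exactly.
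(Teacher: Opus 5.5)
The skeleton of your proof is the paper's. You bound variance by the second moment, split $\|\cdot\|_{\nu,*}^2$ blockwise, use a weighted Cauchy--Schwarz with weights $c_j$ to produce $U_i$, bound each block gradient through the rows of $A^{(i)}_{j,\sx}$, and finish with $L_{ji}=(c_j/\sigma)\E[\|A^{(i)}_{j,\sx}\|^2]$ from Lemmas~\ref{lem:WTCA-smooth}(ii) and~\ref{lem:PO-smooth}(ii). Your remark that the lemma holds only for these constructed constants, not for an arbitrary or minimal Lipschitz constant, is correct. For example, if for each $\sx$ all slopes $a_{j,\sx}(u)$ coincide but vary with $\sx$, then $f_{j,\sigma}$ is affine in $\beta$: its minimal gradient-Lipschitz constant vanishes while its gradient noise does not. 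The paper's proof relies on the same identification.

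The gap is your final step. You claim the spectral-norm route ``needs $\sum_u p_u\|a_u\|^2\le\|A\|^2$ and fails in general,'' but that inequality always holds. Every row norm is bounded by the spectral norm, $\|a^{(i)}_{j,\sx}(u)\|_2=\|(A^{(i)}_{j,\sx})^\top e_u\|_2\le\|A^{(i)}_{j,\sx}\|$. Since $p$ is a probability vector, $\sum_u p_u\|a^{(i)}_{j,\sx}(u)\|_2^2\le\max_u\|a^{(i)}_{j,\sx}(u)\|_2^2\le\|A^{(i)}_{j,\sx}\|^2$, which is exactly the bound the paper uses.

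Because you switch to the Frobenius norm instead, your equality $c_j\E\|A^{(i)}_{j,\sx}\|_F^2=\sigma L_{ji}$ is wrong for the paper's $L_{ji}$. In part (ii) of both lemmas that constant is defined with the spectral norm; the Frobenius norm appears only in part (iii), as an upper bound on it. You therefore only get $c_j\E\|A^{(i)}_{j,\sx}\|_F^2\ge\sigma L_{ji}$, and the chain does not close. As written, your argument proves the lemma only after redefining $L_{ji}$, and hence $\nu_i$, via Frobenius norms. Those are valid but larger constants, and they are not the ones the statement and Corollary~\ref{cor:dual-noise-specializations} use. Replace the Frobenius step with the max-row/spectral bound above and the proof closes, matching the paper's.
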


\begin{proof}{Proof.}
Since the stochastic oracle is unbiased,
$\E_\xi[G(\beta,\sx)\mid\beta] = \nabla_\beta F_\sigma(\beta)$, and the
standard variance bound
$\E\|X-\E[X]\|^2 \le \E\|X\|^2$ applied in the $\|\cdot\|_{\nu,*}$ norm gives
\begin{equation}\label{eq:variance-decomp}
\E_\xi\!\left[\|\delta\|_{\nu,*}^2 \mid \beta\right]
\le
\E_\xi\!\left[\|G(\beta,\sx)\|_{\nu,*}^2 \mid \beta\right]
=
\sum_{i=1}^n
\nu_i^{-1}
\E_\xi\!\left[
\|G_{(i)}(\beta,\sx)\|_{i,*}^2
\mid \beta
\right],
\end{equation}
where the equality follows from the definition of the weighted dual norm.

Fix block $i$. Since $F_\sigma$ decomposes into components,
\begin{equation}\label{eq:block-gradient}
G_{(i)}(\beta,\sx)
=
\sum_{j\in J_i}
\nabla_{\beta_{(i)}} f_{j,\sigma}(\beta,\sx).
\end{equation}
Differentiating~\eqref{eq:log-sum-exp-form} and restricting to block $i$ gives
\[
\nabla_{\beta_{(i)}} f_{j,\sigma}(\beta,\sx)
=
c_j
\sum_{u\in\U_j}
p_{j,u}(\beta,\sx)\,a_{j,\sx}^{(i)}(u),
\]
where $a_{j,\sx}^{(i)}(u)$ is the block-$i$ component of the affine
coefficient vector and $p_{j,u}(\beta,\sx)$ is the weight
in~\eqref{eq:p}.
Since the weights $p_{j,u}$ sum to one, we get
\[
\|\nabla_{\beta_{(i)}} f_{j,\sigma}(\beta,\sx)\|_{i,*}
\le
c_j\max_{u\in\U_j}\|a_{j,\sx}^{(i)}(u)\|_{i,*}.
\]
Let $A_{j,\sx}^{(i)}$ denote the matrix whose rows are $a_{j,\sx}^{(i)}(u)^\top$,
$u\in\U_j$. Since for any matrix, the row norms are bounded by the spectral norm
(i.e.\ $\|a_{j,\sx}^{(i)}(u)\|_2\le\|A_{j,\sx}^{(i)}\|$ for each row),
\begin{equation}
\label{eq:block-grad-bound}
\|\nabla_{\beta_{(i)}} f_{j,\sigma}(\beta,\sx)\|_{i,*}^2
\le
c_j^2\|A_{j,\sx}^{(i)}\|^2.
\end{equation}
Applying the Cauchy--Schwarz inequality to the sum
in~\eqref{eq:block-gradient},
\begin{equation}\label{eq:Cauchy-Schwarz}
\|G_{(i)}(\beta,\sx)\|_{i,*}^2
=
\left\|
\sum_{j\in J_i}\sqrt{c_j}\cdot
\frac{\nabla_{\beta_{(i)}} f_{j,\sigma}(\beta,\sx)}{\sqrt{c_j}}
\right\|_{i,*}^2
\le
\left(\sum_{j\in J_i}c_j\right)
\sum_{j\in J_i}
\frac{1}{c_j}
\|\nabla_{\beta_{(i)}} f_{j,\sigma}(\beta,\sx)\|_{i,*}^2.
\end{equation}
Substituting~\eqref{eq:block-grad-bound} into~\eqref{eq:Cauchy-Schwarz}
and using $U_i = \sum_{j\in J_i}c_j$, we obtain
\[
\|G_{(i)}(\beta,\sx)\|_{i,*}^2
\le
U_i\sum_{j\in J_i}c_j\|A_{j,\sx}^{(i)}\|^2.
\]
Taking expectations over $\xi$ and using the identity
$L_{ji} = \dfrac{c_j}{\sigma}\E_\xi[\|A_{j,\sx}^{(i)}\|^2]$,
which follows from lemmas~\ref{lem:WTCA-smooth}
and~\ref{lem:PO-smooth}, gives
\[
\E_\xi\!\left[
\|G_{(i)}(\beta,\sx)\|_{i,*}^2 \mid\beta
\right]
\le
U_i\sum_{j\in J_i}c_j\E_\xi\!\left[\|A_{j,\sx}^{(i)}\|^2\right]
=
\sigma\,U_i\sum_{j\in J_i}L_{ji}.
\]
Substituting into~\eqref{eq:variance-decomp} and taking the supremum
over $\beta\in\Omega$ completes the proof.
\hfill\Halmos
\end{proof}

The following corollary specializes Lemma~\ref{lem:dual-noise-weighted} to
WTCA and PO under the two update regimes considered in
Proposition~\ref{prop:complexity-comparison}, specifically $n=\tau=1$ (SGD) and $n=\tau=T$ (PS-BCD).

\begin{corollary}
\label{cor:dual-noise-specializations}
Under the assumptions of Lemma~\ref{lem:dual-noise-weighted}, the
dual-weighted oracle variance $\chi^2$ satisfies
\[
\chi_{\mathrm{WTCA}}^2
\le
\frac{\sigma}{1-\gamma},
\qquad \text{ and }\qquad
\chi_{\mathrm{PO}}^2
\le
\sigma,
\]
under both joint updates ($n=\tau=1$) and stage-wise updates ($n=\tau=T$).
\end{corollary}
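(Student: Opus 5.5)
The plan is to specialize the bound \eqref{eq:dual-noise-weighted-bound} of Lemma~\ref{lem:dual-noise-weighted} using the exact form of the ESO curvature weights \eqref{eq:nu-def-general}. The key observation is that, for each block $i$,
\[
\nu_i=\sum_{j\in J_i}\rho_j L_{ji},\qquad \rho_j:=1+\frac{(\kappa_j-1)(\tau-1)}{\max\{1,n-1\}}\ge 1 .
\]
Whenever $\rho_j\equiv\rho$ is common across the components touching block $i$, the ratio $\nu_i^{-1}\sum_{j\in J_i}L_{ji}$ equals $1/\rho$. The lemma then collapses to
\[
\chi^2\le\frac{\sigma}{\rho}\sum_{i=1}^n U_i .
\]
In particular, the Lipschitz constants cancel, so no bound on $\|A^{(i)}\|$ is needed. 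I would use the exact $\nu_i$ from \eqref{eq:nu-def-general} rather than the upper bounds in Lemmas~\ref{lem:WTCA-smooth}--\ref{lem:PO-smooth}. I would also note that the deterministic term $\hat V_{0,\beta}(s_0)$ in $f_{\mathrm{WTCA},0}$ contributes no noise, so it can be dropped or folded in harmlessly.

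\emph{Joint updates} ($n=\tau=1$). Here $\rho_j=1$ and there is a single block that every component touches.
\begin{itemize}
\item For WTCA, $c_t=\gamma^t$, so $U_1=\sum_{t=0}^{T-1}\gamma^t\le 1/(1-\gamma)$, giving $\chi^2_{\mathrm{WTCA}}\le\sigma/(1-\gamma)$.
\item For PO, $m=1$ and $c_1=1$, so $U_1=1$ and $\chi^2_{\mathrm{PO}}\le\sigma$.
\end{itemize}

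\emph{Stagewise updates} ($n=\tau=T$). Here $\rho_j=\kappa_j$.
\begin{itemize}
\item For PO, $\kappa_1=T$, so $\nu_i=T L_{1i}$ and $U_i=1$. The lemma gives $\chi^2\le\sigma\sum_{i=1}^T T^{-1}=\sigma$. Blocks with $L_{1i}=0$, such as $\beta_0$, carry zero gradient and contribute nothing.
\item For WTCA, every component has $\kappa_t\le 2$, so I would take $\rho=2$. Block $t$ is touched by components $t-1$ and $t$, so $U_t\le\gamma^{t-1}+\gamma^t$, with the convention $\gamma^{-1}\equiv0$. Then
\[
\chi^2\le\frac{\sigma}{2}\sum_{t}(\gamma^{t-1}+\gamma^t)\le\frac{\sigma}{2}\cdot\frac{2}{1-\gamma}=\frac{\sigma}{1-\gamma}.
\]
\end{itemize}

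The main obstacle is the boundary bookkeeping in the WTCA stagewise case. Component $0$ has no $\beta_0$ entry, so $\kappa_0=1$ rather than $2$, which breaks the "common $\rho$" simplification for the blocks it touches. I would handle this by using the inequality $\nu_i\ge\min_j\rho_j\sum_{j\in J_i}L_{ji}$ blockwise. If needed, I would replace $\rho$ by $1$ for the affected block $\beta_1$ only, and check that its extra contribution is absorbed because block $0$ contributes $U_0=0$ gradient noise. Alternatively, I would treat component $0$ as formally involving $\beta_0$ so that $\kappa_0=2$, which is consistent with Lemma~\ref{lem:WTCA-smooth}(iii) using factor $2$ uniformly. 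I would adopt the latter convention, as the main text does with $\kappa(F_{\mathrm{WTCA}})=2$.
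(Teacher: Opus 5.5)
Your proposal is correct and is essentially the paper's proof. Specializing Lemma~\ref{lem:dual-noise-weighted} cancels the Lipschitz constants against the ESO weights, leaving $\sigma U_1$ for joint updates, and $\sigma\sum_i U_i/2$ (WTCA, with $U_i\le\gamma^{i-1}+\gamma^i$) or $\sigma\sum_i 1/T$ (PO) for stagewise updates; the paper likewise uses the factor $2$ uniformly for WTCA. The convention you adopt is legitimate because enlarging the $\nu_i$ preserves the ESO inequality, and it also covers component $T-1$, which touches only $\beta_{T-1}$ since $\hat V_{T,\beta}\equiv 0$; your fallback of keeping $\rho_0=1$ on $\beta_1$, however, would not be absorbed by $U_0=0$, since block~$0$'s slot in the sum is only $\sigma/2$ while the loss on $\beta_1$ can reach $\sigma(1+\gamma)/2$, so that route does not recover the constant $\sigma/(1-\gamma)$ for large $T$.
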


\begin{proof}{Proof.}
We specialize Lemma~\ref{lem:dual-noise-weighted} to each formulation
and regime.

\paragraph{(i) Joint updates ($n=\tau=1$).}
With a single block, $J_1=\{1,\ldots,m\}$, $U_1=\sum_{j=1}^mc_j$, and
$\nu_1 = \sum_{j=1}^m L_{j1}$ (the global Lipschitz constant of $F_\sigma$
under single-block ESO).
Lemma~\ref{lem:dual-noise-weighted} gives
\begin{equation}
\label{eq:cor-proof-n1-start}
\chi^2
\le
\sigma\,\nu_1^{-1}\,U_1\sum_{j=1}^mL_{j1}
=
\sigma\,U_1,
\end{equation}
where the equality uses $\nu_1 = \sum_{j=1}^m L_{j1}$.

For WTCA, components are indexed by stages $j=t\in\tset$
with weights $c_j=\gamma^t$, so
$U_1 = \sum_{t=0}^{T-1}\gamma^t = (1-\gamma^T)/(1-\gamma) \le 1/(1-\gamma),$ since $\gamma\in (0,1)$,
and hence $\chi_{\mathrm{WTCA}}^2\le\sigma/(1-\gamma)$.

For PO, there is a single component ($m=1$) with weight $c_1=1$,
so $U_1=1$ and $\chi_{\mathrm{PO}}^2\le\sigma$.

\paragraph{(ii) Stage-wise updates ($n=\tau=T$).}
For WTCA under the stage-wise partition, each block $i$ appears in at
most two adjacent stage components $t=i-1$ and $t=i$, so
$J_i \subseteq \{i-1,i\}$ and
$U_i = \sum_{t\in J_i}\gamma^t \le \gamma^{i-1}+\gamma^i$,
with the convention $\gamma^{-1}\equiv0$.
From~\eqref{eq:WTCA-nu} with $\kappa_t=2$ and $n=\tau=T$,
$\nu_i^{\mathrm{WTCA}} = 2\sum_{t\in J_i}L_{ti}^{\mathrm{WTCA}}.$
Substituting into Lemma~\ref{lem:dual-noise-weighted},
the sum $\sum_{t\in J_i}L_{ti}^{\mathrm{WTCA}}$ cancels and gives
\begin{equation}
\label{eq:cor-proof-wtca-nT-start}
\chi_{\mathrm{WTCA}}^2
\le
\sigma\sum_{i=0}^{T-1}
\frac{\gamma^{i-1}+\gamma^i}{2}.
\end{equation}
Since $\sum_{i=0}^{T-1}\gamma^i \le 1/(1-\gamma)$ and $\sum_{i=0}^{T-1}\gamma^{i-1} = \sum_{k=0}^{T-2}\gamma^k \le 1/(1-\gamma)$ (using the convention $\gamma^{-1}\equiv 0$ at $i=0$), we conclude $\chi_{\mathrm{WTCA}}^2 \le \sigma/(1-\gamma)$.

For PO under stage-wise partition, there is still a single component ($m=1$) with $c_1=1$, so $J_i=\{1\}$ and $U_i=1$ for all $i$. From~\eqref{eq:PO-nu} with $\kappa_1=T$ and $n=\tau=T$, $\nu_i^{\mathrm{PO}} = TL_{1i}^{\mathrm{PO}}$, so $(\nu_i^{\mathrm{PO}})^{-1}L_{1i}^{\mathrm{PO}} = 1/T$. Lemma~\ref{lem:dual-noise-weighted} therefore gives $\chi_{\mathrm{PO}}^2 \le \sigma\sum_{i=1}^T\frac{1}{T} = \sigma.$

Combining both regimes completes the proof.
\hfill\Halmos
\end{proof}

\subsection{Iteration Complexity in the Two Regimes}\label{sec:EC-K}

As discussed in the main paper, the total computational cost of a stochastic first-order method can be written as
\[
\mathrm{TotalCost} \;=\; K \times C,
\]
where $K$ is an upper bound on the number of iterations required to obtain an $\epsilon$-optimal solution (in expectation), and $C$ is the per-iteration cost. The latter depends on the temporal coupling width $\kappa(F_\sigma)$, the number of basis functions $B$, and the number of component functions $m$ in~\eqref{eq:general-formulation-smooth}.

To analyze the computational complexity of both PS-BCD and SGD in a unified manner, we consider their behavior under the general GPS-BCD algorithm described in~\S\ref{sec:EC-PSBCD}. In this framework, the method operates with $n$ blocks and $\tau$-nice sampling. The corresponding iteration complexity and per-iteration cost are denoted by
$K(n,\tau)$ and $C(n,\tau),$ respectively.

We will evaluate two specific regimes: (i) $n=\tau=1$, which corresponds to standard SGD, and (ii) $n=\tau=T$, which corresponds to full PS-BCD. 
This allows a direct comparison of the computational complexity of SGD and PS-BCD through the unified quantities $K(n,\tau)$ and $C(n,\tau)$.

Throughout this section, we set $\beta^1=0$ without loss of generality and suppress logarithmic factors using the $\tilde{\mathcal O}(\cdot)$
notation.

\medskip

To determine $K(n,\tau)$, we start from
Theorem~\ref{thm:psbcd-general}. When $n=\tau$, the bound in~\eqref{eq:psbcd-general-bound} simplifies to\looseness=-1
\begin{equation}
\label{eq:iteration-structure-EC}
\E_{\bar{\mathcal{S}}\times\xi}\!\left[F_\sigma(\bar\beta^K)\right]-F_\sigma(\beta^*)
=
\tilde{\mathcal O}\!\left(
\frac{1}{\sqrt{K}}
\Bigl(
\|\beta^*\|_\nu^2
+
\chi^2
\Bigr)
\right),
\end{equation}
where $\|\beta^*\|_\nu^2=\sum_{i=1}^n\nu_i\|\beta^*_{(i)}\|_i^2$ and
$\chi^2 := \sup_{\beta\in\Omega}\E_\xi[\|\delta\|_{\nu,*}^2\mid\beta]$
with $\delta = G(\beta,\sx)-\nabla_\beta F_\sigma(\beta)$.
Solving for $\E_{\bar{\mathcal{S}}\times\xi}[F_\sigma(\bar\beta^K)]-F_\sigma(\beta^*)\le\epsilon$ gives
\begin{equation}
\label{eq:K-general-EC}
K(n,n)
=
\tilde{\mathcal O}\!\left(
\frac{1}{\epsilon^2}
\Bigl(
\|\beta^*\|_\nu^2+\chi^2
\Bigr)^2
\right).
\end{equation}
The iteration complexity is thus governed by two quantities: a
deterministic term $\|\beta^*\|_\nu^2$, which encodes temporal coupling
through the curvature weights $\nu_i$, and a stochastic term $\chi^2$,
which captures the dual-weighted gradient variance.
We bound each quantity under both formulations and both update regimes.

Since the feasible set $\Omega$ is compact, there exist constants
$R_i>0$, independent of $T$ and $\sigma$, such that
$\|\beta_{(i)}^*\|_i^2\le R_i^2$ for all $i$. Therefore,
\begin{equation}
\label{eq:beta-nu-bound-EC}
\|\beta^*\|_\nu^2
=
\sum_{i=1}^n\nu_i\|\beta^*_{(i)}\|_i^2
\le
\Bigl(\max_{1\le i\le n}R_i^2\Bigr)\sum_{i=1}^n\nu_i
=
\mathcal O\!\left(\sum_{i=1}^n\nu_i\right).
\end{equation}
Thus, bounding $\|\beta^*\|_\nu^2$ reduces to bounding
$\sum_{i=1}^n\nu_i$. The stochastic term $\chi^2$ is controlled by
Corollary~\ref{cor:dual-noise-specializations}.

\paragraph{Regime (i): joint updates, i.e., SGD ($n=\tau=1$).}
In this regime, $\beta\in\mathbb{R}^{BT}$ is treated as a single block,
so $n=1$ and the ESO reduces to standard single-block smoothness:
$\nu_1 = \sum_{j=1}^m L_{j1}$.

For WTCA, the $m$ component functions correspond to stages $t=0,\ldots,T-1$.  
Thus, when $n=\tau=1$, the single-block curvature weight reduces to
\[
\nu_1^{\mathrm{WTCA}}
= 
\sum_{j=1}^m L_j^{\mathrm{WTCA}}
=
\sum_{t=0}^{T-1} L_t^{\mathrm{WTCA}}.
\]
Applying~\eqref{eq:WTCA-nu} with $n=\tau=1$ and $|\U_t|=1$ for all $t$ yields
\[
\nu_1^{\mathrm{WTCA}}
\le
\frac{1+\gamma^2}{\sigma}
\sum_{t=0}^{T-1}\gamma^t\,|\Endo_t||\action_t|
=
\mathcal{O}\!\left(\frac{1}{\sigma(1-\gamma)}\right),
\]
where the last step uses $\sum_{t=0}^{T-1}\gamma^t \le 1/(1-\gamma)$ and the assumption that $|\Endo_t||\action_t|$ is uniformly bounded in $t$.
Hence, by~\eqref{eq:beta-nu-bound-EC}, $\|\beta^*\|_\nu^2 = \mathcal O(1/(\sigma(1-\gamma)))$.
Combined with $\chi_{\mathrm{WTCA}}^2 = \mathcal O(\sigma/(1-\gamma))$
from Corollary~\ref{cor:dual-noise-specializations}, and substituting
into~\eqref{eq:K-general-EC}, we obtain
\[
K_{\mathrm{WTCA}}(1,1)
=
\tilde{\mathcal O}\!\left(
\frac{1}{\epsilon^2}
\left(
\frac{1}{\sigma(1-\gamma)}
+
\frac{\sigma}{1-\gamma}
\right)^2
\right).
\]

For PO with $n=\tau=1$, applying~\eqref{eq:PO-nu-general} gives
\[
\nu_1^{\mathrm{PO}}
\le
\frac{4\gamma^2|\action|}{\sigma(1-\gamma^2)}
=
\mathcal{O}\!\left(\frac{|\action|}{\sigma(1-\gamma)}\right).
\]
Combined with $\chi_{\mathrm{PO}}^2=\mathcal{O}(\sigma)$ from Corollary~\ref{cor:dual-noise-specializations}, we get
\[
K_{\mathrm{PO}}(1,1)
=
\tilde{\mathcal{O}}\!\left(
\frac{1}{\epsilon^2}
\left(
\frac{|\action|}{\sigma(1-\gamma)}
+\sigma
\right)^2
\right).
\]

\paragraph{Regime (ii): stage-wise updates, i.e., PS-BCD ($n=\tau=T$).}
In this regime, $\beta=(\beta_0,\ldots,\beta_{T-1})$ is partitioned into $T$ stage blocks updated in parallel.

For WTCA with $n=\tau=T$, each block index $i$ corresponds to a stage-wise block, so that $\beta_{(i)}=\beta_t$ with $t=i$.  
Applying~\eqref{eq:WTCA-nu} in this setting and using $\kappa_t=2$ yields
\[
\nu_i^{\mathrm{WTCA}}
\;\le\;
\frac{2(1+\gamma^2)}{\sigma}\,(\gamma^{i-1}+\gamma^i),
\qquad
\text{with the convention }\gamma^{-1}=0.
\]
Summing over all $T$ stage-aligned blocks then gives
\[
\sum_{i=0}^{T-1}\nu_i^{\mathrm{WTCA}}
\;\le\;
\frac{2(1+\gamma^2)}{\sigma}
\sum_{i=0}^{T-1}(\gamma^{i-1}+\gamma^i)
\;\le\;
\frac{4(1+\gamma^2)}{\sigma(1-\gamma)}
=
\mathcal{O}\!\left(\frac{1}{\sigma(1-\gamma)}\right).
\]
By~\eqref{eq:beta-nu-bound-EC}, $\|\beta^*\|_\nu^2=\mathcal{O}(1/(\sigma(1-\gamma)))$. Together with $\chi_{\mathrm{WTCA}}^2=\mathcal{O}(\sigma/(1-\gamma))$,
\[
K_{\mathrm{WTCA}}(T,T)
=
\tilde{\mathcal{O}}\!\left(
\frac{1}{\epsilon^2}
\left(
\frac{1}{\sigma(1-\gamma)}
+
\frac{\sigma}{1-\gamma}
\right)^2
\right).
\]

For PO, applying~\eqref{eq:PO-nu} with $n=\tau=T$ gives $\nu_i^{\mathrm{PO}}\le 4T\gamma^{2i}|\action|/\sigma$, so
\[
\sum_{i=0}^{T-1}\nu_i^{\mathrm{PO}}
\le
\frac{4T|\action|}{\sigma}
\sum_{i=0}^{T-1}\gamma^{2i}
\le
\frac{4T|\action|}{\sigma(1-\gamma^2)}
=
\mathcal{O}\!\left(
\frac{T|\action|}{\sigma(1-\gamma)}
\right).
\]
By~\eqref{eq:beta-nu-bound-EC}, $\|\beta^*\|_\nu^2=\mathcal{O}(T|\action|/(\sigma(1-\gamma)))$. Together with $\chi_{\mathrm{PO}}^2=\mathcal{O}(\sigma)$,
\[
K_{\mathrm{PO}}(T,T)
=
\tilde{\mathcal{O}}\!\left(
\frac{1}{\epsilon^2}
\left(
\frac{T|\action|}{\sigma(1-\gamma)}
+\sigma
\right)^2
\right).
\]

\subsection{Per-Iteration Computational Cost}
\label{sec:EC-cost}

We now derive the per-iteration cost $C(n,\tau)$ for the two special 
cases $n=\tau=1$ (SGD) and $n=\tau=T$ (PS-BCD).
Recall from~\eqref{eq:stochastic-gradient}--\eqref{eq:stochastic-gradient-block}
that the stochastic gradient and its block-$i$ component are \looseness=-1
\[
G(\beta,\sx)
=
\sum_{j=1}^m\sum_{u\in\U_j}p_{j,u}(\beta,\sx)\,a_{j,\sx}(u),
\qquad
G_{(i)}(\beta,\sx)
=
\sum_{j:\,i\in \mathcal{I}_j}\sum_{u\in\U_j}
p_{j,u}(\beta,\sx)\,a_{j,\sx}^{(i)}(u),
\]
where $p_{j,u}(\beta,\sx)$ is the weight~\eqref{eq:p} and
$a_{j,\sx}^{(i)}(u)$ is the block-$i$ portion of $a_{j,\sx}(u)$.
Since each affine vector $a_{j,\sx}(u)$ has nonzero entries in at most
$\kappa_j$ blocks of $\beta$, each of size $B$, computing the
stochastic gradient involves three steps per component $j$:

\begin{enumerate}

\item[(i)] \emph{Affine coefficient computation.}
Evaluating $a_{j,\sx}(u)^\top\beta$ for all $u\in\U_j$ requires
reading $\kappa_j$ blocks of size $B$, costing
$\mathcal O(|\U_j|\,\kappa_j B)$ in total for component~$j$.

\item[(ii)] \emph{$p_{j,u}$ weights computation.}
Given the affine coefficents, forming $p_{j,u}(\beta,\sx)$ via 
normalisation costs $\mathcal O(|\U_j|)$.

\item[(iii)] \emph{Gradient aggregation.}
Assembling the block-$i$ gradient contribution from component $j$
requires reading the block-$i$ portions $a_{j,\sx}^{(i)}(u)$ of
dimension $B$ for all $u\in\U_j$, costing $\mathcal O(|\U_j|\,B)$
per component $j$ per block $i\in \mathcal{I}_j$.

\end{enumerate}

Steps~(i)--(ii) are dominated by step~(i), which costs
$\mathcal O(|\U_j|\,\kappa_j B)$ per component $j$.
Note that the weights $p_{j,u}$ for component $j$ require the
full affine coefficient $a_{j,\sx}(u)^\top\beta$, which reads all $\kappa_j$
blocks of $\beta$; no block can form its contribution to $G_{(i)}$
until these weights are available. The total cost then depends on 
whether affine coefficients and weights are shared across blocks (SGD) 
or computed independently per block (PS-BCD).

\noindent \textbf{(i) $\mathbf{n=\tau=1}$: SGD.}
SGD computes the full gradient $G(\beta,\sx)$ by processing all $m$ 
components sequentially. For each component $j$, the affine coefficients 
and weights $p_{j,u}$ are computed once at cost 
$\mathcal{O}(|\U_j|\kappa_j B)$ and the result is aggregated 
across all $\kappa_j$ blocks simultaneously. The total cost is 
therefore
\begin{equation}\label{eq:cost-prox-general}
C(1,1) = \mathcal O\!\left(\sum_{j=1}^m |\U_j|\,\kappa_j B\right).
\end{equation}

\noindent \textbf{(ii) $\mathbf{n=\tau=T}$: PS-BCD.}
PS-BCD computes all $T$ block gradients $G_{(i)}(\beta,\sx)$ in 
parallel. Each block $i$ independently computes the affine coefficients 
and weights for every component $j\in\mathcal{I}_i$, incurring cost 
$\sum_{j:\,i\in \mathcal{I}_j}\mathcal O(|\U_j|\,\kappa_j B)$.
The effective per-iteration cost under parallel execution is therefore
\begin{equation}\label{eq:cost-psbcd-general}
C(T,T)
=
\mathcal O\!\left(
\max_{1\le i\le n}\sum_{j:\,i\in \mathcal{I}_j}|\U_j|\,\kappa_j B
\right).
\end{equation}

We now specialize~\eqref{eq:cost-prox-general}
and~\eqref{eq:cost-psbcd-general} to WTCA and PO.

\noindent \textbf{WTCA.}
WTCA has $m=T$ stagewise components each coupling two adjacent
blocks ($\kappa_t=2$, $\mathcal{I}_t=C_t=\{t,t+1\}$) over a finite 
index set $\U_t=\Endo_t\times\action_t$ of size $\mathcal O(1)$ 
uniformly in $t$. For SGD, each component $t$ contributes 
$|\U_t|\cdot\kappa_t\cdot B = \mathcal{O}(1)\cdot 2\cdot B = 
\mathcal{O}(B)$ to the sum in~\eqref{eq:cost-prox-general}. 
For PS-BCD, each block $i$ appears in at most two components 
($t=i-1$ and $t=i$), each contributing $\mathcal{O}(1)\cdot 2\cdot B
=\mathcal{O}(B)$, so the max in~\eqref{eq:cost-psbcd-general} is 
$\mathcal{O}(B)$ uniformly in $i$. Substituting:
\[
C_{\mathrm{WTCA}}(1,1)
=
\mathcal O\!\left(
\sum_{t=0}^{T-1}\mathcal{O}(B)
\right)
=
\mathcal O(TB),
\qquad
C_{\mathrm{WTCA}}(T,T)
=
\mathcal O\!\left(\max_i\,\mathcal{O}(B)
\right)
=
\mathcal O(B).
\]

\noindent \textbf{PO.}
PO has a single component ($m=1$) over the full action set
$\U_1=\action$, coupling all $T$ stage blocks ($\kappa_1=T$,
$\mathcal{I}_1=C_1=\{0,1,\ldots,T-1\}$).
For SGD, the single component contributes $|\action|\cdot T\cdot B$ 
to~\eqref{eq:cost-prox-general}. For PS-BCD, every block $i$ 
belongs to this single component, so the sum in the max 
of~\eqref{eq:cost-psbcd-general} equals $|\action|\cdot T\cdot B$ 
for every $i$. Substituting:
\[
C_{\mathrm{PO}}(1,1)
=
\mathcal O\!\left(|\action|\,TB\right),
\qquad
C_{\mathrm{PO}}(T,T)
=
\mathcal O\!\left(|\action|\,TB\right).
\]
Stage-block parallelism does not reduce the per-iteration cost for PO:
since all $T$ blocks appear in the single component,
every block $i$ must compute the full weights
$p_{1,u}(\beta,\sx)$ by reading all $T$ blocks,
and the $\max_i$ does not improve on the sequential sum.

\subsection{Total Complexity and Proof of
  Proposition~\ref{prop:complexity-comparison}}
\label{sec:EC-totalcost-proof}

We now combine the iteration complexity bounds from~\S\ref{sec:EC-K}
with the per-iteration costs from~\S\ref{sec:EC-cost} to obtain the
total complexity expressions in
Proposition~\ref{prop:complexity-comparison}.
The joint-update regime ($n=\tau=1$) corresponds to SGD and the
stage-wise regime ($n=\tau=T$) corresponds to PS-BCD.

\begin{proof}{Proof of Proposition~\ref{prop:complexity-comparison}.}

The total computational cost is
$\mathrm{TotalCost} = K(n,\tau)\cdot C(n,\tau)$,
where $K(n,\tau)$ is the iteration complexity from~\S\ref{sec:EC-K}
and $C(n,\tau)$ is the per-iteration cost from~\S\ref{sec:EC-cost}.

\medskip
\noindent\emph{(i) Joint updates, i.e., SGD ($n=\tau=1$).}
From~\S\ref{sec:EC-K},
\[
K_{\mathrm{WTCA}}(1,1)
=
\tilde{\mathcal O}\!\left(
\frac{1}{\epsilon^2}
\left(
\frac{1}{\sigma(1-\gamma)}
+
\frac{\sigma}{1-\gamma}
\right)^2
\right),
\qquad
K_{\mathrm{PO}}(1,1)
=
\tilde{\mathcal O}\!\left(
\frac{1}{\epsilon^2}
\left(
\frac{|\action|}{\sigma(1-\gamma)}
+
\sigma
\right)^2
\right).
\]
From~\S\ref{sec:EC-cost},
\[
C_{\mathrm{WTCA}}(1,1)
=\mathcal O(TB),
\qquad
C_{\mathrm{PO}}(1,1)
=\mathcal O(|\action|\,TB).
\]
Multiplying yields
\[
\mathrm{TotalCost}_{\mathrm{WTCA}}(1,1)
=
\tilde{\mathcal O}\!\left(
\frac{TB}{\epsilon^2}
\left(
\frac{1}{\sigma(1-\gamma)}
+
\frac{\sigma}{1-\gamma}
\right)^2
\right),
\quad
\mathrm{TotalCost}_{\mathrm{PO}}(1,1)
=
\tilde{\mathcal O}\!\left(
\frac{|\action|\,TB}{\epsilon^2}
\left(
\frac{|\action|}{\sigma(1-\gamma)}
+
\sigma
\right)^2
\right).
\]
For $\sigma\in(0,1)$, the term $1/(\sigma(1-\gamma))$ dominates
$\sigma/(1-\gamma)$ in the WTCA bound.
For PO, $|\action|/(\sigma(1-\gamma))$ dominates $\sigma$ in the regime
$|\action|\gg\sigma^2(1-\gamma)$, which holds whenever the action space
is not negligibly small relative to the smoothing parameter.
Retaining the leading-order terms gives
\[
\mathrm{TotalCost}_{\mathrm{WTCA}}(1,1)
=
\tilde{\mathcal O}\!\left(
\frac{BT}{\sigma^2\epsilon^2(1-\gamma)^2}
\right),
\qquad
\mathrm{TotalCost}_{\mathrm{PO}}(1,1)
=
\tilde{\mathcal O}\!\left(
\frac{|\action|^3\,BT}{\sigma^2\epsilon^2(1-\gamma)^2}
\right).
\]

\medskip
\noindent\emph{(ii) Stage-wise parallel updates, i.e., PS-BCD ($n=\tau=T$).}
From~\S\ref{sec:EC-K},
\[
K_{\mathrm{WTCA}}(T,T)
=
\tilde{\mathcal O}\!\left(
\frac{1}{\epsilon^2}
\left(
\frac{1}{\sigma(1-\gamma)}
+
\frac{\sigma}{1-\gamma}
\right)^2
\right),
\qquad
K_{\mathrm{PO}}(T,T)
=
\tilde{\mathcal O}\!\left(
\frac{1}{\epsilon^2}
\left(
\frac{T|\action|}{\sigma(1-\gamma)}
+
\sigma
\right)^2
\right).
\]
From~\S\ref{sec:EC-cost},
\[
C_{\mathrm{WTCA}}(T,T)
=\mathcal O(B),
\qquad
C_{\mathrm{PO}}(T,T)
=\mathcal O(|\action|\,TB).
\]
Multiplying yields
\[
\mathrm{TotalCost}_{\mathrm{WTCA}}(T,T)
=
\tilde{\mathcal O}\!\left(
\frac{B}{\epsilon^2}
\left(
\frac{1}{\sigma(1-\gamma)}
+
\frac{\sigma}{1-\gamma}
\right)^2
\right),
\quad
\mathrm{TotalCost}_{\mathrm{PO}}(T,T)
=
\tilde{\mathcal O}\!\left(
\frac{|\action|\,TB}{\epsilon^2}
\left(
\frac{T|\action|}{\sigma(1-\gamma)}
+
\sigma
\right)^2
\right).
\]
Retaining the leading-order terms under the same dominance conditions
as in part~(i) gives
\[
\mathrm{TotalCost}_{\mathrm{WTCA}}(T,T)
=
\tilde{\mathcal O}\!\left(
\frac{B}{\sigma^2\epsilon^2(1-\gamma)^2}
\right),
\qquad
\mathrm{TotalCost}_{\mathrm{PO}}(T,T)
=
\tilde{\mathcal O}\!\left(
\frac{|\action|^3\,BT^3}{\sigma^2\epsilon^2(1-\gamma)^2}
\right).
\]
These four expressions coincide with the entries in the table of
Proposition~\ref{prop:complexity-comparison} and complete the proof.
\hfill\Halmos
\end{proof}

\section{Asymptotic Approximation under Random Basis Constructions}
\label{sec:EC-asymptotics}

This section proves Proposition~\ref{prop:asymptotic-consistency-random}. We work under Assumption~\ref{assump:random-basis functions} in the paper and use the random basis functions described in \S\ref{sec:model-selection}.

\begin{proof}{Proof of  Proposition~\ref{prop:asymptotic-consistency-random}.}

Under Assumption~\ref{assump:random-basis functions}, Proposition EC.1 of \citet{pakiman2024} implies that for the ALP formulation with $B$ random basis functions, the resulting upper bound $U_{\mathrm{ALP},B}$ satisfies the high-probability error bound:
\begin{equation}
\label{eq:alp-rate-appendix}
U_{\mathrm{ALP},B} - V^*_0(s_0)
=
\mathcal{O}\!\left(
\frac{1}{(1-\gamma)\sqrt{B}}
\sqrt{\log\!\frac{1}{\delta}}
\right)
\quad
\text{with probability at least } 1-\delta,
\end{equation}
for any $\delta \in (0,1)$, where the probability is with respect to the randomness in the sampled basis functions.
We extend this rate to WTCA and PO using the ordering established in Proposition~\ref{prop:WTCA-comparison}:
\[
0
\le
U_{\mathrm{PO},B} - V^*_0(s_0)
\le
U_{\mathrm{WTCA},B} - V^*_0(s_0)
\le
U_{\mathrm{ALP},B} - V^*_0(s_0).
\]
On the high-probability event in which~\eqref{eq:alp-rate-appendix} holds, it follows immediately that
\[
0
\le
U_{\mathrm{PO},B} - V^*_0(s_0)
\le
U_{\mathrm{WTCA},B} - V^*_0(s_0)
\le
\mathcal{O}\!\left(
\frac{1}{(1-\gamma)\sqrt{B}}
\sqrt{\log\!\frac{1}{\delta}}
\right).
\]
Thus, for any fixed initial state $s_0$, WTCA and PO achieve the same $\mathcal{O}(1/\sqrt{B})$ approximation rate as ALP under random basis functions, with probability at least $1-\delta$.
\hfill\Halmos

\end{proof}

%%%%%%%%%%%%%%%%%%%%%%%%%%
\section{Merchant Ethanol Production}
\label{sec:EC-numerical}
This section provides the full problem formulation, implementation details, and additional results for the merchant ethanol production experiments summarized in \S\ref{scc:ethanol-main}. The main paper describes the operating modes, action space, and exogenous state structure at a high level; here we give the complete mathematical specification.

\subsection{Problem setup}
\label{sc:MEP_Instance}
Consider an ethanol production plant that converts corn and natural gas into ethanol. The operator purchases inputs in wholesale markets and sells ethanol into energy markets. Over a finite planning horizon, the operator maximizes expected total profit by dynamically switching the facility among operating modes in response to uncertain commodity prices. 

The problem is naturally modeled as a finite-horizon MDP using the real options approach~\citep{dixit1994investment,trigeorgis1996real,smith1998valuing,smith1999options,nadarajah2015relaxations,nadarajah2023review}.
Let $T$ denote the planning horizon with stages $t\in\tset$.
At each stage, the plant can be operational ($\mathsf{O}$), mothballed ($\mathsf{M}$), or abandoned ($\mathsf{A}$). The endogenous state is therefore $\se_t\in\Endo_t:=\{\mathsf{O},\mathsf{M},\mathsf{A}\}$.

Let $\mathcal{C}:=\{\mathrm{C}, \mathrm{NG}, \mathrm{E}\}$ denote the set of commodities, where $\mathrm{C}$, $\mathrm{NG}$, and $\mathrm{E}$ represent corn, natural gas, and ethanol, respectively. For each $c\in\mathcal{C}$ and $t,s\in\tset$ with $s\geq t$, we denote as $\sx_{t,s}^{c}$ the futures price observed at stage $t$ for delivery of commodity $c$ at maturity $s$. The spot price at stage $t$ is $\sx_{t,t}^c$. The forward curve for commodity $c$ at stage $t$ is the vector $\mathbf{\sx}_{t}^{c}:=(\sx_{t,t}^{c},\sx_{t,t+1}^{c},\ldots,\sx_{t,T-1}^{c})$. The exogenous state concatenates all three forward curves, i.e., $\mathbf{\sx}_{t}:= (\mathbf{\sx}_{t}^{\mathrm{C}}, \mathbf{\sx}_{t}^{\mathrm{NG}}, \mathbf{\sx}_{t}^{\mathrm{E}})\in \mathcal{W}_t:=\R_{+}^{3\times(T-t)}$.
Unlike the Bermudan setting, the dimension of $\mathcal{W}_t$ increases with $T$, i.e., the forward curve at each stage becomes longer as the horizon $T$ increases.
The full state space is $\mathcal{S}=\bigcup_{t=0}^{T-1}\mathcal{X}_t\times\mathcal{W}_t$.

The feasible action set depends on the plant's current state. When operational ($\mathsf{O}$), the plant can produce at full capacity ($\mathsf{P}$), temporarily suspend production ($\mathsf{S}$), transit to the mothballed state ($\mathsf{M}$), or permanently shut down ($\mathsf{A}$). We intentionally reuse $\mathsf{M}$ and $\mathsf{A}$ for both states and actions, as these actions deterministically transition the plant to the corresponding states. From the mothballed state ($\mathsf{M}$), the feasible actions are to remain mothballed ($\mathsf{M}$), reactivate the plant ($\mathsf{R}$), or abandon it ($\mathsf{A}$). Suspension incurs a higher fixed cost than mothballing but allows faster resumption of production. Once the plant is abandoned, the feasible action set is a singleton that only maintains the abandoned state. By the final stage $T-1$, abandonment is mandatory.
The feasible action sets are thus $\mathcal{A}(\mathsf{O}):=\{\mathsf{A},\mathsf{P},\mathsf{S},\mathsf{M}\}$, $\mathcal{A}(\mathsf{M}):=\{\mathsf{A},\mathsf{M},\mathsf{R}\}$, $\mathcal{A}(\mathsf{A}):=\{\mathsf{A}\}$, and $\mathcal{A}(x_{T-1}):=\{\mathsf{A}\}$ for any $x_{T-1} \in \{\mathsf{O},\mathsf{M}, \mathsf{A}\}$. 

The endogenous state evolves based on the endogenous transition function $h(\se_t,a_t):\mathcal{X}_t\times\mathcal{A}(x_t)\rightarrow\mathcal{X}_{t+1}$, which maps the current operational state $\se_t$ and action $a_t$ to the next operational state $x_{t+1}$.
\[
h(\se_t,a_t)=
\begin{cases}
\mathsf{O}, & \text{if } (\se_t,a_t)\in\{(\mathsf{O},\mathsf{P}),(\mathsf{O},\mathsf{S}),(\mathsf{M},\mathsf{R})\},\\[4pt]
\mathsf{M}, & \text{if } (\se_t,a_t)\in\{(\mathsf{O},\mathsf{M}),(\mathsf{M},\mathsf{M})\},\\[4pt]
\mathsf{A}, & \text{if } a_t=\mathsf{A}.
\end{cases}
\]
Actions $\mathsf{P}$ and $\mathsf{S}$ both preserve the operational state $\mathsf{O}$. Action $\mathsf{M}$ at state $\mathsf{O}$ moves the plant to the mothballed state $\mathsf{M}$. From $x_t=\mathsf{M}$, action $\mathsf{M}$ maintains the mothballed state, while action $\mathsf{R}$ returns the plant to the operational state $\mathsf{O}$.
Action $\mathsf{A}$ transits any other states to the abandoned state.
The endogenous state transitions are summarized in Figure \ref{fig:DecisionTree}.

\begin{figure}[ht!]
\centering
\includegraphics[scale=0.7]{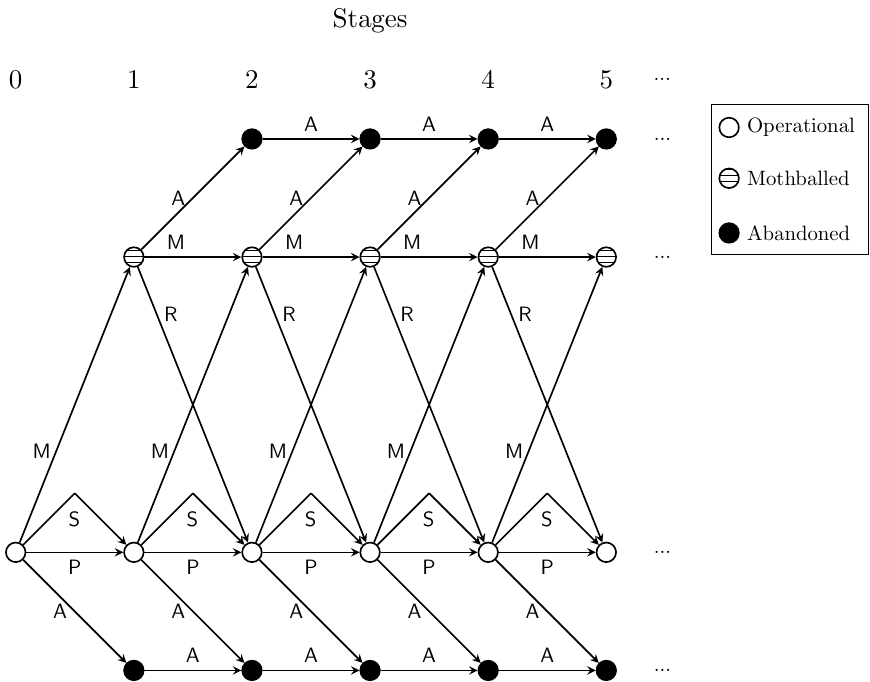}
\caption{Endogenous state transitions in ethanol production~\citep{guthrie2009real,yang2024least,yang2025improved}.}
\label{fig:DecisionTree}
\end{figure}
The dynamics of commodity prices are exogenous to the actions. For commodity $c\in\mathcal{C}$, stage $t$, and maturity $s\geq t$, let $\eta_{t,s,l}^{c}$ be the loading coefficient associated with risk factor $l \in \{1,\ldots,L\}$. The futures price process for $W^{c}_{t,s}$ follows the term structure model
\begin{equation}
\label{eq:MEP_PriceModel}
\frac{dW^{c}_{t,s}}{W^{c}_{t,s}}=\sum_{l=1}^{L}\eta_{t, s,l}^{c}dZ_{l},
\end{equation}
where the $\{Z_{l}\}_{l=1}^{L}$ are independent standard Brownian motions under the risk neutral measure. The term structure model captures $L$ sources of uncertainty to drive price movements and correlated shocks across commodities and delivery maturities.
 
Let $\mu_{\mathrm{C}}$ and $\mu_{\mathrm{NG}}$ denote the quantities of corn and natural gas required to produce one unit of ethanol, respectively.  
At stage~$t$, the instantaneous marginal profit of production is the spread\looseness = -1
\[
\sx_{t,t}^{\mathrm{E}} - \mu_{\mathrm{C}} \sx_{t,t}^{\mathrm{C}} - \mu_{\mathrm{NG}} \sx_{t,t}^{\mathrm{NG}},
\]
which represents the ethanol spot price net of input costs.  
The plant produces at capacity~$Q$ and has a fixed production cost~$\mathsf{C}_{\mathsf{P}}$.  
Suspending production incurs a maintenance cost~$\mathsf{C}_{\mathsf{S}}$, whereas keeping the plant mothballed incurs the smaller cost~$\mathsf{C}_{\mathsf{M}}$, with $\mathsf{C}_{\mathsf{M}} < \mathsf{C}_{\mathsf{S}}$.  
Mode changes also entail costs. Mothballing requires~$\mathsf{I}_{\mathsf{M}}$, reactivation requires~$\mathsf{I}_{\mathsf{R}}$, and permanent abandonment yields a salvage value~$\mathsf{S}_{\mathsf{A}}$. The immediate reward function $r(\se_t,\mathbf{\sx}_t,a_t)$ summarizes these payoffs for each $(\se_t,\mathbf{\sx}_t,a_t)\in\Endo_t\times\Exo_t\times\action(\se_t)$:
\[
r(\se_t,\mathbf{\sx}_t,a_t)=
\begin{cases}
(\sx_{t,t}^{\mathrm{E}}-\mu_{\mathrm{C}}\sx_{t,t}^{\mathrm{C}}-\mu_{\mathrm{NG}}\sx_{t,t}^{\mathrm{NG}})Q-\mathsf{C}_{\mathsf{P}}, & (\se_t,a_t)=(\mathsf{O},\mathsf{P}),\\[3pt]
-\mathsf{C}_{\mathsf{S}}, & (\se_t,a_t)=(\mathsf{O},\mathsf{S}),\\[3pt]
-\mathsf{I}_{\mathsf{M}}, & (\se_t,a_t)=(\mathsf{O},\mathsf{M}),\\[3pt]
-\mathsf{C}_{\mathsf{M}}, & (\se_t,a_t)=(\mathsf{M},\mathsf{M}),\\[3pt]
-\mathsf{I}_{\mathsf{R}}, & (\se_t,a_t)=(\mathsf{M},\mathsf{R}),\\[3pt]
\mathsf{S}_{\mathsf{A}}, & (\se_t,a_t)\in\{(\mathsf{O},\mathsf{A}),(\mathsf{M},\mathsf{A})\},\\[3pt]
0, & (\se_t,a_t)=(\mathsf{A},\mathsf{A}).
\end{cases}
\]

\subsection{Implementation Details}
\label{sc:MEP_Details}
We evaluate WTCA and benchmark methods on two sets of merchant ethanol production instances that differ in planning horizon.  
The first set contains 12 instances, one for each month of 2011, with a 24-month horizon ($T=24$).
For each instance, the initial forward curve $\mathbf{\sx}_0$ consists of corn, natural gas, and ethanol futures prices observed on the first trading day of that month.  
The price model~\eqref{eq:MEP_PriceModel} is calibrated using daily futures data from the Chicago Mercantile Exchange for 2008--2011.  
We use $L=8$ factors, explaining approximately 95\% of the observed price variance.  
Monthly risk-free discount factors $\gamma$ are derived from the one-year U.S. Treasury rate on the initial date of each instance.  
Operational parameters follow \citet{yang2024least,yang2025improved} and are summarized in Table~\ref{Tab:ops_parameters}.

\begin{table}[ht!]
\centering
\caption{Operational parameter settings for merchant ethanol production}
\renewcommand{\arraystretch}{1.3}
\begin{tabular}{cc|cc}
\hline
Parameter & Value (\$\ MM) & Parameter & Value \\ \hline
$\mathsf{I}_{\mathsf{M}}$ & 0.50 & $T$ & 24 months \\
$\mathsf{I}_{\mathsf{R}}$ & 2.50 & $\mu_{\mathrm{C}}$ & 0.36 bushel/gallon \\
$\mathsf{C}_{\mathsf{P}}$ & 2.25 & $\mu_{\mathrm{NG}}$ & 0.035 MMBtu/gallon \\
$\mathsf{C}_{\mathsf{S}}$ & 0.5208 & $Q$ & 8.33 million gallons \\
$\mathsf{C}_{\mathsf{M}}$ & 0.02917 & $\mathsf{S}_{\mathsf{A}}$ & 0 \\ \hline
\end{tabular}
\label{Tab:ops_parameters}
\end{table}

The second set uses the same starting months but extends the horizon to 36 months ($T=36$).  
For the first 24 maturities, both the initial forward curve and the loading coefficients replicate the values from the 24-month calibration. We extrapolate the remaining 12 maturities by repeating the data corresponding to maturities 12--23. All other operational parameters remain identical to those in Table~\ref{Tab:ops_parameters}.

\paragraph{Basis function architecture.}
At stage~$t$, the value function approximation includes a constant term, all spot and futures prices across the three commodities, and $B - 1 - 3(T-t)$ random Fourier basis functions.  
We define a vector of random coefficients as
\[
\theta := (\theta_0,\theta_0^{\mathrm{E}},\ldots,\theta_{T-1}^{\mathrm{E}},\theta_0^{\mathrm{C}},\ldots,\theta_{T-1}^{\mathrm{C}},\theta_0^{\mathrm{NG}},\ldots,\theta_{T-1}^{\mathrm{NG}})\in\mathbb{R}^{3T+1}.
\] 
The intercept $\theta_0$ is drawn from $\text{Unif}[-\pi,\pi]$, and each $\theta_q^{c}$ for commodity $c\in\mathcal{C}$ and $q\in\tset$ is sampled independently from $N(0,\varrho)$, where $\varrho$ is a bandwidth parameter.  
The resulting random Fourier basis function at stage~$t$ is
\[
\phi(\mathbf{\sx}_t,\theta) = 
\cos\!\left(\theta_0 + \sum_{c\in\mathcal{C}}\sum_{q=0}^{T-t-1}\theta_q^c\,\sx_{t,t+q}^c\right).
\]
The value function approximation is then given by
\begin{equation}\label{eq:MEP_VFA}
\hat{V}_t(\se_t,\mathbf{\sx}_t)
= \beta_{\se_t,0}
+ \sum_{b=1}^{B-3(T-t)-1}\beta_{\se_t,b}\,\phi_b(\mathbf{\sx}_t;\theta^b)
+ \sum_{c\in\mathcal{C}}\sum_{s=t}^{T-1}\beta_{\se_t,s-t}^c\,\sx_{t,s}^c,
\end{equation}
where $\theta^b$ denotes the $b$th independently drawn coefficient vector and the $\beta$ terms are the associated weights.
The first term represents the constant; the second term aggregates the random Fourier bases, and the last term captures the linear dependence on spot and futures prices across all maturities.
We include 15 random Fourier basis functions to achieve stable bound performance; hence, the total number of basis terms at stage $t$ is $3(T-t)+16$.

\paragraph{Training and benchmarks.}
We smooth the WTCA and PO models via the log-sum-exp transformation described in \S\ref{sec:EC-log-sum-exp-smoothing-curvature} and solve the resulting smoothed models using Algorithm \ref{alg:SBCD} and SGD, respectively. We select the smoothing parameter $\sigma$ from $\{1,10,20,\ldots,5000\}$ and tune the bandwidth parameter $\varrho$ over $\{10^5,10^4,\ldots,10^{-5}\}$. We rescale the payoff terms by $\lambda\in\{1,10,20,\ldots,100\}$ to match the scale of the Fourier bases. Hyperparameters are chosen by cross-validation to balance accuracy and computational time.

For each instance, PO uses the same basis function architecture and smoothing procedure as WTCA, and the two methods are compared under the same time budget. The time limits are five hours for the 24-month instances and nine hours for the 36-month instances. As an additional benchmark, we include LSM, using the same basis functions and 50,000 simulated trajectories.

We compute the greedy lower bounds and the information relaxation upper bounds as in \S\ref{scc:Details-BermudanOptions}. All bounds are evaluated on the same 100{,}000 sample paths across the three methods. The conditional expectation in the dual penalty is computed with an one-step inner sampling with 500 draws.

\subsection{Results}
\label{sc:MEP_Results}
The $T=24$ results are reported in Table~\ref{T:Results3-main} of \S\ref{scc:ethanol-main} and discussed there. Table~\ref{T:Results4} below reports the 36-stage results. The table format is identical to that of the main paper tables.

\begingroup
\renewcommand{\arraystretch}{1.2}
\begin{table}[ht!]
\centering
\caption{Upper and lower bound estimates (with standard errors) and relative percentage differences for LSM, PO, and WTCA in 36-stage ($\mathbf{T=36}$) ethanol production instances}
\begin{tabular}{ccccccc}
\hline\hline
         & \multicolumn{3}{c}{Upper Bound Estimate}         & \multicolumn{3}{c}{Lower Bound Estimate}          \\ \cline{2-7} 
Instance & LSM              & PO                & WTCA            & LSM               & PO               & WTCA             \\ \hline
Jan      & 57.27 (0.10)     & 60.38 (0.05)      & 54.32 (0.07)    & 50.21 (0.04)      & 50.70 (0.05)     & 50.78 (0.05)     \\
Feb      & 52.76 (0.10)     & 55.38 (0.05)      & 49.75 (0.07)    & 46.30 (0.04)      & 46.54 (0.05)     & 46.61 (0.05)     \\
Mar      & 84.47 (0.10)     & 85.73 (0.05)      & 80.28 (0.07)    & 78.63 (0.04)      & 78.75 (0.05)     & 78.76 (0.05)     \\
Apr      & 64.04 (0.10)     & 65.93 (0.05)      & 60.23 (0.07)    & 56.99 (0.04)      & 57.05 (0.05)     & 57.05 (0.05)     \\
May      & 54.85 (0.10)     & 55.39 (0.05)      & 51.99 (0.07)    & 47.92 (0.04)      & 48.02 (0.05)     & 48.04 (0.05)     \\
Jun      & 52.09 (0.10)     & 55.03 (0.05)      & 49.41 (0.07)    & 44.93 (0.04)      & 45.06 (0.05)     & 45.08 (0.05)     \\
Jul      & 41.99 (0.10)     & 45.78 (0.05)      & 39.96 (0.07)    & 36.33 (0.04)      & 36.36 (0.05)     & 36.76 (0.05)     \\
Aug      & 58.56 (0.10)     & 60.21 (0.05)      & 55.41 (0.07)    & 51.51 (0.04)      & 51.56 (0.05)     & 52.01 (0.05)     \\
Sep      & 61.69 (0.10)     & 63.83 (0.05)      & 58.21 (0.07)    & 54.61 (0.04)      & 54.79 (0.05)     & 55.21 (0.05)     \\
Oct      & 53.27 (0.10)     & 55.86 (0.05)      & 50.10 (0.07)    & 47.38 (0.04)      & 47.35 (0.05)     & 47.41 (0.05)     \\
Nov      & 49.92 (0.10)     & 50.01 (0.05)      & 47.44 (0.07)    & 44.00 (0.04)      & 44.08 (0.05)     & 44.12 (0.05)     \\
Dec      & 47.83 (0.10)     & 48.76 (0.05)      & 45.11 (0.07)    & 42.11 (0.04)      & 42.19 (0.05)     & 42.20 (0.05)     \\ 
\hline
         & \multicolumn{3}{c}{\%(UB - Best UB)/ (Best UB)} & \multicolumn{3}{c}{\%(Best UB - LB) / (Best UB)} \\ \cline{2-7} 
Instance & LSM            & PO              & WTCA          & LSM             & PO             & WTCA           \\ \hline
Jan      & 5.43           & 11.16           & 0.00          & 7.57            & 6.66           & 6.52           \\
Feb      & 6.05           & 11.32           & 0.00          & 6.93            & 6.45           & 6.31           \\
Mar      & 5.22           & 6.79            & 0.00          & 2.06            & 1.91           & 1.89           \\
Apr      & 6.33           & 9.46            & 0.00          & 5.38            & 5.28           & 5.28           \\
May      & 5.50           & 6.54            & 0.00          & 7.83            & 7.64           & 7.60           \\
Jun      & 5.42           & 11.37           & 0.00          & 9.07            & 8.80           & 8.76           \\
Jul      & 5.08           & 14.56           & 0.00          & 9.08            & 9.01           & 8.01           \\
Aug      & 5.68           & 8.66            & 0.00          & 7.04            & 6.95           & 6.14           \\
Sep      & 5.98           & 9.65            & 0.00          & 6.18            & 5.88           & 5.15           \\
Oct      & 6.33           & 11.50           & 0.00          & 5.43            & 5.49           & 5.37           \\
Nov      & 5.23           & 5.42            & 0.00          & 7.25            & 7.08           & 7.00           \\
Dec      & 6.03           & 8.09            & 0.00          & 6.65            & 6.47           & 6.45           \\ \hline\hline
\end{tabular}
\label{T:Results4}
\end{table}
\endgroup

\paragraph{Scaling to $T=36$.}
Table~\ref{T:Results4} summarizes results for the 36-stage instances.
WTCA continues to deliver the tightest upper bound in every month, and its advantage widens with the horizon.
PO's average upper-bound deviation increases from 3.1\% at $T=24$ to 9.5\% at $T=36$, while LSM's rises from 4.3\% to 5.7\%. The near tripling of PO's deviation under a 50\% horizon increase is consistent with the $\mathcal{O}(T)$ complexity of PO established in Proposition~\ref{prop:complexity-comparison}.

\paragraph{Policy quality at $T=36$.}
Policy performance is similar across methods at $T=36$. The average optimality gaps are 6.2\% for WTCA, 6.5\% for PO, and 6.7\% for LSM. WTCA and PO differ by less than 0.5 percentage points in 9 of 12 instances, and all three methods are within 1 percentage point of each other in the same 9 instances. Relative to $T=24$, where WTCA had a clearer advantage, the gaps compress at $T=36$.
A likely explanation is that long horizon instances tend to have similar early-stage decisions across methods. In particular, the initial months often favor suspending production, which narrow the performance differences.
The remaining differences occur in months when commodity spreads lie near switching thresholds, where the timing of mothballing and reactivation is most sensitive.
\looseness = -1

%%%%%%%%%%%%%%%%%
\end{document}